\numberwithin{equation}{section}
\newtheorem{thm}{Theorem}[section]
\newtheorem{prop}[thm]{Proposition}
\newtheorem{lem}[thm]{Lemma}
\newtheorem{cor}[thm]{Corollary}
\newtheorem{Def}[thm]{Definition}
\theoremstyle{definition}
\newtheorem{Ass}[thm]{Assumption}
\newtheorem{rem}[thm]{Remark}
\DeclareMathOperator{\DIV}{div}
\newcommand{\R}{\mathbb{R}}
\newcommand{\N}{\mathbb{N}}
\newcommand{\p}{\partial}
\newcommand{\diff}{\mathop{}\!\mathrm{d}}
\newcommand{\doublewidetilde}[1]{{%
  \mathpalette\double@widetilde{#1}%
}}
\newcommand{\double@widetilde}[2]{%
  \sbox\z@{$\m@th#1\widetilde{#2}$}%
  \ht\z@=.9\ht\z@
  \widetilde{\box\z@}%
}
\author{Miroslav Bul\'{\i}\v{c}ek}
\address{Mathematical Institute, Faculty of Mathematics and Physics, Charles University, Sokolovsk\'{a} 83, 186~75, Prague, Czech Republic}
\email{mbul8060@karlin.mff.cuni.cz}
\thanks{Miroslav Bul{\'\i}{\v{c}}ek was supported by the project No. 20-11027X financed by GA\v{C}R}
\author{Jakub Woźnicki}
\address{Faculty of Mathematics, Informatics and Mechanics, University of Warsaw, Stefana Banacha 2, 02-097 Warsaw, Poland}
\email{jw.woznicki@student.uw.edu.pl}
\thanks{Jakub Woźnicki was supported by National Science Center, Poland through the project no. 2023/32/O/ST1/03031}
\begin{document}

\title[Parabolic equations with measure or integrable data]{Parabolic equations with non-standard growth and measure or integrable data}

\begin{abstract}
We consider a parabolic partial differential equation with Dirichlet boundary conditions and measure or $L^1$ data. The key difficulty consists in a presence of a monotone operator~$A$ subjected to a non-standard growth condition, controlled by the exponent $p$ depending on the time and the spatial variable. We show the existence of a weak and an entropy solution to our system, as well as the uniqueness of an entropy solution, under the assumption of boundedness and log-H\"{o}lder continuity of the variable exponent~$p$ with respect to the spatial variable. On the other hand, we do not assume any smoothness of~$p$ with respect to the time variable.
\end{abstract}

\keywords{parabolic equations, measure data, $L^1$-data, weak solution, variable Lebesgue spaces}
\subjclass{35K45, 35K67, 35D99}

\maketitle

\section{Introduction}
In this work we deal with the following parabolic problem
\begin{align}\label{eq:mainequation}
    \p_t u(t,x) - \DIV_x A(t, x, \nabla_x u) = f(t,x),
\end{align}
which is completed by the following boundary and initial conditions
\begin{align}\label{eq:boundaryconditions}
u|_{\p\Omega} = 0, \quad u(0, x) = u_0(x).
\end{align}
Here, we assume that $\Omega\subset \mathbb{R}^d$ is a Lipschitz domain and $T$ is the length of time interval. We denote by $\Omega_T:=(0,T)\times \Omega$ the parabolic cylinder, and the equation~\eqref{eq:mainequation} is supposed to be satisfied in~$\Omega_T$. The operator $A$ is non-linear but elliptic, roughly having the form
\begin{equation}
A(t,x,\xi)\sim |\xi|^{p(t,x)-2}\xi.\label{prototypeA}
\end{equation}
We specify the precise assumption on the operator~$A$ in the next section. The relation~\eqref{prototypeA} just emphasizes that  the behavior of the operator $A$ is the $p(t,x)$-Laplacian-like setting.

Concerning the data, we assume that it is merely integrable, i.e., $f\in L^1(\Omega_T)$ and $u_0\in L^1(\Omega)$. Moreover, whenever we deal with very weak solutions, we will relax the assumption on integrability and assume that the data is just Radon measures, i.e., $f\in \mathcal{M}(\Omega_T)$ and $u_0\in \mathcal{M}(\Omega)$.

Our main goal is to introduce the well sounded existence and, if possible, also the uniqueness theory for a solution to~\eqref{eq:mainequation}. Let us make several comments about the motivation for the studied problem, and let us mention the main difficulties.

First, we consider the $p(t,x)$ setting such that
\begin{equation}\label{sta1}
1<p_{\min}\le p(t,x)\le p_{\max} <\infty
\end{equation}
a.e. in $\Omega_T$. In addition, we require the log-H\"{o}lder continuity of the function $p$ with respect to the spatial variable, but \emph{we do not postulate any assumption concerning its behavior with respect to the time variable}, see the next section for the precise formulation. Note that the problems with $p(t,x)$-growth may play an important role, when one analyzes many problems arising in physics. In particular, the continuum mechanics with electrorheological fluids serves as a prominent example. The properties of the material change with respect to the underlying electric field, which is typically a relatively smooth function in the spatial variable but may even be discontinuous in the time variable, see e.g. \cite{RAJAGOPAL1996401} and \cite{MR4578481} for relevant description of such models and see also reference therein. In general, the parabolic problems with a decently smooth data ($f\in L^2(\Omega_T)$ and $u_0\in L^2(\Omega)$), and a nonlinearity of the type~\eqref{sta1} were first considered in~\cite{MR4074614} under the framework of the operators with $(p,q)$-growth\footnote{In our setting, it would be $(p_{\min},p_{\max})$-growth.}. On the other hand, up to the full generality in terms of Orlicz--Sobolev spaces the problem was treated in  \cite{agnes14}, where the author assumed the general Orlicz--Sobolev setting, but required the log-H\"{o}lder-like continuity with respect to the $(t,x)$-variable. See also \cite{chlebicka2018well} for the generalization in the non-reflexive spaces, or \cite{MR3023393} for the more general operators. Up to date, the best result for the most general behavior of the power law exponent is due to~\cite{bulicek2021parabolic}, where the existence and the uniqueness of the weak solution is shown for the power-law variable $p$ fulfilling the same condition as in this paper (possibly discontinuous in time) and for good data $f$ and~$u_0$.

Second, we deal with very weak assumption on the data, which forces us to go beyond the theory of classical weak solutions, and we need to introduce different concepts. The starting point of the analysis is undoubtedly the series of works  \cite{MR1453181,MR1025884,MR1183665} dealing with the constant $p$-growth and the $L^1$ or measure data. Unfortunately, the concept of a distributional solution is too weak to provide the uniqueness and therefore a concept of the entropy solution was introduced for the integrable data in~\cite{MR1436364}.  Finally, the case when in the equation there appears a non-integrable, convective term was treated in~\cite{MR2729044}.  Note, that for problems with the (constant) $p$-growth, there is available an optimal regularity theory, and we refer the reader to the works \cite{MR2862024,MR4162964,MR2852212}.

However, there are only few results dealing with a $p(t,x)$-setting and integrable data. Let us mention the work \cite{gwiazda2015renormalized}, where such case is treated and the concept of a renormalized solution is introduced. Notice, that the renormalization technique goes hand in hand with the notion of an entropy solution, and therefore is also a possible candidate for a "proper" definition of a solution. Nevertheless, and most importantly, we are not aware of any results dealing with the integrable data and $p(t,x)$-growth with the growth parameter $p$ being possibly discontinuous with respect to the time variable. We would like to emphasize that such setting is not only a borderline case of some academic problem, but it is indeed a possible natural setting in cases, when one deals with the rheology depending discontinuously on the time variable (a prototype of this, is the electrorheological fluid).

The main result of the presented paper is \emph{the existence theory for weak solutions and the existence and uniqueness theory for entropy solutions} for the same range of parameters as it is available for the constant $p$-growth, provided that the $p(t,x)$ parameter is log-H\"{o}lder continuous with respect to the spatial variable.

In the next section, we introduce the basic notation, function spaces and the rigorous statement of our result.  Section~\ref{S3} is devoted to the proof of the existence, while Section~\ref{S4} deals with the uniqueness of an entropy solution. Several technical tools used in the paper are recalled in the Appendix, mostly without the proof. Appendix~\ref{app:musielaki} is related mostly to the Musielak--Orlicz spaces and their properties, in particular to the variable exponent Lebesgue spaces. Appendix~\ref{Ap2} is devoted to the classical results in the theory of PDE's.

\section{Notations, assumptions and statement of main results}\label{S2}


We recall, that we consider a Lipschitz domain $\Omega\subset \R^d$ in the $d-$dimensional space, and we define $\Omega_T:=\Omega\times (0,T)$. In what follows,  $x\in\Omega$ always denotes the spatial variable, while $t\in (0, T)$ is reserved for the time variable. We use the standard notation for the Sobolev and the Lebesgue spaces. For vector manipulations, we write $a\cdot b$ for a scalar product whenever $a,b\in \mathbb{R}^d$. Moreover, for $p\in [1, +\infty]$ we denote by $p'$ its H\"{o}lder conjugate. Throughout the paper, the notation $L^{p(t, x)}(\Omega_T)$ and $L^{p(t, x)}(\Omega_T;\mathbb{R}^d)$ is notorious and means the variable exponent spaces and vector-valued variable exponent spaces, respectively, see Appendix~\ref{app:musielaki} for the full explanation. Here, we would like to remind that $p(t,x)$ is at least measurable and satisfies~\eqref{sta1}. Similarly, the notation $\mathcal{M}(\Omega)$ or $\mathcal{M}(\Omega_T)$ denotes the space of Radon measures defined on $\Omega$ or $\Omega_T$ respectively, which can be defined as a dual space to the space of continuous function. For any $f\in \mathcal{M}(U)$ and $g\in C(U)$ we denote
$$
\langle f, g\rangle_{\mathcal{M}(U)}:= \int_U g \diff f.
$$
As a last remark, the constant $C$ appearing in the estimates is a universal constant that may vary from line to line, but depends only on the data. If needed, we  write the dependence explicitly.

\subsection{Assumption on the nonlinearity \texorpdfstring{$A$}{A} and the definition of solution}


We start with the assumption on $A$. In fact, here we just specify what we mean by \eqref{prototypeA}, i.e., that $A$ behaves like a $p(t,x)$-Laplacian.

\begin{Ass}[Assumptions on $A$]\label{intro:ass_on_A} We assume that $A:\Omega_T  \times \R^d \to \mathbb{R}^d$ satisfies:
\begin{enumerate}[label=(A\arabic*)]
\item \label{intro:ass_on_A:continuity}$A$ is a Carath\'eodory mapping, i.e., for a.e. $(t,x) \in \Omega_T$, map $\mathbb{R}^d \ni \xi \mapsto A(t,x,\xi)$ is continuous and for all $\xi \in \mathbb{R}^d$, map $\Omega_T \ni (t,x) \mapsto A(t,x,\xi)$ is measurable,
\item \label{intro:ass_on_A:coercgr}(coercivity and growth bound) there is a positive constant $c$ and a function $h \in L^{\infty}(\Omega_T)$, such that for all $\xi \in \R^d$ and a.e. $(t,x) \in \Omega_T$
$$
|\xi|^{p(t,x)} + |A(t,x,\xi)|^{p'(t, x)} \leq c \, A(t,x,\xi)\cdot \xi + h(t,x),
$$
\item \label{intro:assumA_mono}(monotonicity) for all $\eta,\xi \in \R^d$ and a.e. $(t,x) \in \Omega_T$
$$
(A(t,x,\xi) - A(t,x,\eta)) \cdot (\xi - \eta) \geq 0,
$$
\item  \label{intro:assumA_vanish} for a.e. $(t,x) \in \Omega_T$ we have $A(t,x,0) = 0$.
\end{enumerate}
\end{Ass}

Having specified the behavior of $A$, we can now define the notion of a weak (or a very weak) solution.
\begin{Def}[Weak solutions]\label{def:weaksolution}
Let $f\in \mathcal{M}(\Omega_T)$,  $u_0\in \mathcal{M}(\Omega)$ and let $A$ satisfy Assumption~\ref{intro:ass_on_A}. We  say that $u: \Omega_T \rightarrow \R$ is a weak solution to the problem \eqref{eq:mainequation}--\eqref{eq:boundaryconditions} if $u\in L^\infty((0, T); L^1(\Omega))\cap L^1((0, T); W^{1, 1}_0(\Omega))$, $A(t, x, \nabla_x u)\in L^1(\Omega_T; \R^d)$ and
\begin{align}\label{wfweak}
    \int_{\Omega_T}-u\,\p_t\phi(t, x)\diff x\diff t + \int_{\Omega_T}A(t, x, \nabla_x u) \cdot \nabla_x \phi\diff x\diff t = \langle f,\phi\rangle_{\mathcal{M}(\Omega_T)} + \langle u_0, \phi(0, \cdot)\rangle_{\mathcal{M}(\Omega)},
\end{align}
for arbitrary $\phi\in C^\infty_c((-\infty, T)\times\Omega)$.
\end{Def}

The definition above is very standard, but it is not known how to prove the uniqueness of such solution. In addition, for its existence it is required a certain minimal value of $p$, which depends on the dimension $d$.
On the other hand, such limitation does not appear for the entropy solution, but it requires an introduction of the proper truncation function, which we do as follows. For any $k>0$ we define the classical cut-off function $T_k$ as
\begin{align}
    T_k(z) &= \left\{\begin{array}{ll} \label{def:trunc}
        z, \qquad&\text{ if }|z|\leq k \\
        \mathrm{sign}(z)k, \qquad&\text{ if }|z| > k
    \end{array}\right. ,\\
    \intertext{and we also define its $\varepsilon$-mollification as }
    T_{k,\varepsilon}(z)&= \left\{\begin{array}{ll}\label{def:trunc_smooth}
        z, \qquad&\text{ if }|z|\leq k, \\
        \mathrm{sign}(z)(k+\varepsilon/2), \qquad&\text{ if }|z| \geq k + \varepsilon,
    \end{array}\right. ,
\end{align}
whereas $T_{k,\varepsilon}$ is defined on $(k, k+\varepsilon)$ in such a way that $T_{k,\varepsilon}\in C^2(\R)$, $0 \leq T'_{k, \varepsilon}\leq 1$ and $T_{k,\varepsilon}$ is concave on $\R_+$, convex on $\R_-$ with a second derivative satisfying $|T''_{k,\varepsilon}|\leq C\varepsilon^{-1}$. For further reference we define as well the primitive function of $T_k$
\begin{align}\label{def:primitiveoftruncation}
G_k(s) = \int_0^s T_k(z)\diff z,
\end{align}
and
\begin{align}\label{chik}
\chi_k(z) = \left\{\begin{array}{ll}
        1, \qquad&\text{ if }|z|\leq k, \\
        0, \qquad&\text{ if }|z| > k.
    \end{array}\right. .
\end{align}
Meaning, that $T'_k(z) = \chi_k(z)$ whenever  $z\neq \pm k$.


First, with the help of truncation function, we can weaken the definition of weak solution in the following way.
\begin{Def}[Weak solutions II]\label{wierd}
Let $f\in \mathcal{M}(\Omega_T)$,  $u_0\in \mathcal{M}(\Omega)$ and let $A$ satisfy Assumption~\ref{intro:ass_on_A}. We  say that $u: \Omega_T \rightarrow \R$ is a weak solution to the problem \eqref{eq:mainequation}--\eqref{eq:boundaryconditions} if $u\in L^\infty((0, T); L^1(\Omega))$, $\overline{A}\in L^1(\Omega_T;\R^d)$, for all $k\in \mathbb{N}$ we have $T_k(u)\in L^1((0, T); W^{1, 1}_0(\Omega))$, and
\begin{align}\label{wfweakII}
    \int_{\Omega_T}-u\,\p_t\phi(t, x)\diff x\diff t + \int_{\Omega_T}\overline{A} \cdot \nabla_x \phi\diff x\diff t = \langle f,\phi\rangle_{\mathcal{M}(\Omega_T)} + \langle u_0, \phi(0, \cdot)\rangle_{\mathcal{M}(\Omega)},
\end{align}
for arbitrary $\phi\in C^\infty_c((-\infty, T)\times\Omega)$. In addition, we require that
$$
\overline{A}=A(t,x,\nabla_x T_k(u)) \quad \textrm{a.e. on } \{|u|\le k\}.
$$
\end{Def}
The above definition allows us to prove the existence of weak solution for large range of $p$'s. Nevertheless, the best range of $p$ is obtained for an entropy solution defined below.
\begin{Def}[Entropy solutions]\label{def:entropysolution}
Let $f\in L^1(\Omega_T)$,  $u_0\in L^1(\Omega)$ and let $A$~satisfy Assumption~\ref{intro:ass_on_A}. We say that $u: \Omega_T\rightarrow \R$ is an entropy solution to the problem \eqref{eq:mainequation}--\eqref{eq:boundaryconditions} if $u\in L^\infty((0, T);L^1(\Omega))$ and  $T_k(u)\in L^1((0,T); W^{1,1}_0(\Omega))$ for all $k\in \R_+$. In addition, we require that $\nabla_x T_k(u)\in L^{p(t, x)}(\Omega_T;\R^d)$ and $A(t, x, \nabla_x T_k(u))\in L^{p'(t, x)}(\Omega_T;\R^d)$, and
\begin{equation}\label{ineq:inequalityinentropydefinition}
\begin{split}
    &\int_{\Omega}G_k(u(t, x) - \phi(t, x)) - G_k(u_0(x) - \phi(0, x))\diff x + \int_0^t\int_{\Omega}T_k(u - \phi)\,\p_t\phi(t, x)\diff x\diff \tau\\
    &\phantom{=} + \int_0^t\int_{\Omega}A(t, x, \nabla_x u)\cdot\nabla_x T_k(u - \phi)\diff x\diff \tau \leq \int_0^t\int_{\Omega}f\,T_k(u - \phi)\diff x\diff \tau,
\end{split}
\end{equation}
for all $k\in \R_+$, all $\phi\in C^\infty_c((-\infty, T)\times\Omega)$, and almost all $t\in (0, T)$.
\end{Def}

\subsection{Main results}

We introduce two results - for weak and for entropy solution. For each of it, we need to add certain restriction on the behavior of the variable exponent~$p$. We start with the assumptions for a weak solution.
\begin{Ass}[Assumptions for weak solutions]\label{ass:exponent_cont_space}
We assume that a measurable function $p(t,x): \Omega_T \to [1,\infty)$ satisfies the following:
\begin{enumerate}[label=(W\arabic*)]    \item\label{ass:cont} (continuity in space) $p(t, x)$ is a log-H\"older continuous functions on $\Omega$ uniformly in time, i.e., there is a constant $C$ such that for  all $t \in [0,T]$  and all $x, y \in \Omega$ and fulfilling $0<|x-y|<1$, we have
$$
|p(t,x) - p(t,y)| \leq -\frac{C}{\log|x-y|},
$$
\item\label{ass:usual_bounds_exp} (bounds) there holds that $ \frac{2d+1}{d+1} < p_{\mathrm{min}}\leq p(t,x)  \leq p_{\mathrm{max}} < +\infty$ for a.e. $(t,x) \in \Omega_T$.
\end{enumerate}
\end{Ass}
With such assumption, we can formulate the existence result for a weak solution.
\begin{thm}\label{THM_W}
    Assume that $u_0\in \mathcal{M}(\Omega)$ and $f\in \mathcal{M}(\Omega_T)$. Moreover, let $A$ satisfy Assumption~\ref{intro:ass_on_A} and $p$ satisfy  Assumption~\ref{ass:exponent_cont_space}. Then, there exists a weak solution to the system \eqref{eq:mainequation}--\eqref{eq:boundaryconditions} in the sense of  Definition~\ref{def:weaksolution}.

Moreover, if the assumption \ref{ass:usual_bounds_exp} is not satisfied but the minimal value of $p$ satisfies at least $p_{\min}>\frac{2d}{d+1}$, then there exists a weak solution in sense of Definition~\ref{wierd}.
\end{thm}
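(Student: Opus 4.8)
The plan is to construct the weak solution as a limit of solutions to regularized problems with smooth data, and then pass to the limit using compactness and monotonicity arguments. First I would approximate the measure data: let $f_n \in C_c^\infty(\Omega_T)$ and $u_{0,n} \in C_c^\infty(\Omega)$ be sequences converging weakly-$*$ to $f$ and $u_0$ respectively in the sense of measures, with $\|f_n\|_{L^1(\Omega_T)} \le C\|f\|_{\mathcal{M}(\Omega_T)}$ and $\|u_{0,n}\|_{L^1(\Omega)} \le C\|u_0\|_{\mathcal{M}(\Omega)}$ (such sequences can be obtained by mollification and truncation). For each fixed $n$, the data is smooth, in particular in $L^2$, and the existence of a weak solution $u_n$ with $\nabla_x u_n \in L^{p(t,x)}(\Omega_T;\mathbb{R}^d)$ follows from the result of \cite{bulicek2021parabolic}, which applies precisely under Assumption~\ref{ass:exponent_cont_space}\ref{ass:cont} (log-Hölder in space, no smoothness in time assumed) together with Assumption~\ref{intro:ass_on_A} on $A$. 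This gives the approximating sequence.

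\emph{A priori estimates.} The core of the argument is a uniform bound for the sequence $u_n$. Testing the equation for $u_n$ with $T_k(u_n)$ (or, more precisely, with a time-regularized version thereof — here the Landes–Mustonen-type time mollification or the Steklov average is needed to make the test function admissible, since $u_n$ itself lacks a time derivative in a good space) and integrating over $(0,\tau)\times\Omega$, the parabolic term yields $\int_\Omega G_k(u_n(\tau))\,dx$, the elliptic term produces (by \ref{intro:ass_on_A:coercgr}) a bound on $\int_{\{|u_n|\le k\}} |\nabla_x u_n|^{p(t,x)} \,dt\,dx + \int_{\{|u_n| \le k\}} |A(t,x,\nabla_x u_n)|^{p'(t,x)}\,dt\,dx$, and the right-hand side is controlled by $k\|f_n\|_{L^1} + C$. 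This is the classical Boccardo–Gallouët scheme adapted to variable exponents: from $\sup_k \frac{1}{k}\int_{\{|u_n|\le k\}}|\nabla_x u_n|^{p(t,x)}\,dt\,dx \le C$ and $\|u_n\|_{L^\infty((0,T);L^1(\Omega))}\le C$, one deduces via a Gagliardo–Nirenberg interpolation argument that $u_n$ is bounded in $L^q((0,T);W_0^{1,q}(\Omega))$ for some $q>1$; here the assumption $p_{\min}>\frac{2d+1}{d+1}$ (resp. $p_{\min}>\frac{2d}{d+1}$) is exactly what guarantees $q>1$ (resp. $q\ge 1$) and, in the first case, also that $A(t,x,\nabla_x u_n)$ is bounded in $L^{r}(\Omega_T;\mathbb{R}^d)$ for some $r>1$, hence equiintegrable. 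In the borderline case $p_{\min}>\frac{2d}{d+1}$, one only gets uniform bounds on $T_k(u_n)$ in $L^1((0,T);W_0^{1,1}(\Omega))$ for each $k$ and on $\overline{A}_n := A(t,x,\nabla_x u_n)$ in $L^1(\Omega_T;\mathbb{R}^d)$, which is why the weaker Definition~\ref{wierd} must be used there.

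\emph{Passage to the limit.} From the a priori bounds, $u_n$ is relatively compact: using the equation, $\partial_t u_n$ is bounded in a negative-order space (e.g. $L^1((0,T);W^{-1,s}(\Omega))$ plus measures), so by an Aubin–Lions–Simon argument $u_n \to u$ strongly in $L^1(\Omega_T)$ and a.e., and consequently $T_k(u_n) \to T_k(u)$ strongly and $\nabla_x T_k(u_n) \rightharpoonup \nabla_x T_k(u)$ weakly in $L^{p(t,x)}$. To identify the nonlinear limit $\overline{A} = \wlim A(t,x,\nabla_x u_n)$ with $A(t,x,\nabla_x u)$ on each $\{|u|\le k\}$, I would use the standard Minty-type monotonicity trick localized via truncations: test the equation with $T_k(u_n) - T_k(u)$ after time-regularization, use that the parabolic contribution vanishes in the limit (this requires the "radiation control" / a.e.-convergence argument of Blanchard–Murat or Boccardo–Murat–Puel type, controlling the time-mollification error), deduce $\limsup_n \int A(t,x,\nabla_x T_k(u_n))\cdot\nabla_x(T_k(u_n)-T_k(u)) \le 0$, and then conclude $\nabla_x T_k(u_n) \to \nabla_x T_k(u)$ in measure and $A(t,x,\nabla_x T_k(u_n)) \rightharpoonup A(t,x,\nabla_x T_k(u))$ by the monotonicity~\ref{intro:assumA_mono} and continuity~\ref{intro:ass_on_A:continuity}; equiintegrability of $A(t,x,\nabla_x u_n)$ (in the first case) then upgrades weak $L^1$ convergence to the needed identification. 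Finally, passing to the limit in the weak formulation for $u_n$ against $\phi\in C_c^\infty((-\infty,T)\times\Omega)$ — linear in $u_n$ and in $\overline{A}_n$, with the data terms converging by weak-$*$ convergence of measures against the continuous function $\phi$ — yields \eqref{wfweak} (resp. \eqref{wfweakII}).

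\emph{Main obstacle.} The hardest part is the a priori estimate step combined with the time-regularization needed to justify testing with truncations: because $u_n$ has no time derivative in $L^2$ or any Bochner space of the relevant type, one must work with Steklov averages or the Landes mollification, and in the variable-exponent setting the commutator/error terms arising from mollification in time must be controlled carefully — this is precisely where the absence of any regularity of $p$ in time could bite, and the resolution is that the test functions $T_k(\cdot)$ are independent of $(t,x)$, so the chain rule and monotonicity structure interact with the time-mollification only through $u_n$ itself, not through $p$. Verifying that the Boccardo–Gallouët interpolation goes through with $p$ replaced by $p(t,x)$ — using the embedding theory for variable-exponent Sobolev spaces from Appendix~\ref{app:musielaki}, which relies on \ref{ass:cont} — and that the two thresholds $\frac{2d+1}{d+1}$ and $\frac{2d}{d+1}$ emerge correctly is the remaining technical burden.
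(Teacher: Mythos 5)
Your overall architecture matches the paper's: approximation of the measure data by bounded data with uniform $L^1$ control, the Boccardo--Gallou\'et-type estimates obtained from the truncation energy identity (the paper's Lemma~\ref{lem:boundsontruncations}, where the weighted bound \eqref{app:fullenergyl1} is derived by integrating the level-$k$ identity against $(1+k)^{-1-\lambda}$), a localized Gagliardo--Nirenberg interpolation exploiting the log-H\"older control of the oscillation of $p$ on small balls (Lemma~\ref{lem:normboundsgradandoperator}), Aubin--Lions compactness applied to $T_{k,\varepsilon}(u^n)$, and a Minty argument. You also locate the two thresholds $\tfrac{2d+1}{d+1}$ and $\tfrac{2d}{d+1}$ in the right place.

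There is, however, a genuine gap in your identification of the nonlinear limit, and it sits exactly at the point you flag as the ``main obstacle.'' You propose to test with a time-regularized version of $T_k(u_n)-T_k(u)$ and to control the mollification error. For constant $p$ this is the Landes/Steklov route, but here it requires the time-mollified gradient of $T_k(u)$ to converge modularly in $L^{p(t,x)}(\Omega_T)$, and this is precisely what is not available when $p$ is merely measurable in $t$: mollification in time does not commute with a time-dependent modular, and your proposed resolution (that $T_k$ is independent of $(t,x)$) addresses the chain rule but not this convergence. The paper avoids the issue structurally: in Lemma~\ref{lem:limitoftheproductinpowermu} it never uses the limit $u$ as a test function, but instead compares two approximations $u^n$ and $u^m$, mollifies \emph{in space} (where log-H\"older continuity applies, via Proposition~\ref{prop:convergence_of_convolution_in_musielak_orlicz}) every quantity that must converge in $L^{p(t,x)}$ or $L^{p'(t,x)}$, and mollifies \emph{in time} only quantities that are uniformly bounded, for which plain dominated convergence suffices. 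A second omission: since the products $A(t,x,\nabla_x T_k(u^n))\cdot\nabla_x T_k(u^n)$ are only bounded in $L^1(\Omega_T)$ and not equiintegrable (equiintegrability of $A(t,x,\nabla_x u^n)$ alone does not give it for the product), one cannot pass to the limit in the energy terms on all of $\Omega_T$; the paper inserts the Chacon biting lemma (Lemma~\ref{lem:monotonicitytrickneededinequality}) to restrict to sets $\Omega_T\setminus E_j$ with $|E_j|\to0$ before running Minty's trick, and only afterwards removes $E_j$. Without these two devices your limit-identification step does not close as written.
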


For the entropy solutions, the lower bound of the function $p(t, x)$ may be lowered, therefore in this case we  work with the following assumption.
\begin{Ass}[Assumptions for entropy solutions]\label{ass:exponent_cont_space_entr}
We assume that a measurable function $p(t,x): \Omega_T \to [1,\infty)$ satisfies the following:
\begin{enumerate}[label=(E\arabic*)]    \item\label{ass:cont_entr} (continuity in space) $p(t, x)$ is a log-H\"older continuous functions on $\Omega$ uniformly in time, i.e., there is a constant $C$ such that for  all $t \in [0,T]$  and all $x, y \in \Omega$ and fulfilling $0<|x-y|<1$, we have
$$
|p(t,x) - p(t,y)| \leq -\frac{C}{\log|x-y|},
$$
\item\label{ass:usual_bounds_exp_entr} (bounds) it holds that $ 1 < p_{\mathrm{min}}\leq p(t,x)  \leq p_{\mathrm{max}} < +\infty$ for a.e. $(t,x) \in \Omega_T$.
\end{enumerate}
\end{Ass}


The result for the entropy solution is the following.
\begin{thm}\label{THM_E}
    Assume that  $u_0\in L^1(\Omega)$ and $f\in L^1(\Omega_T)$. Moreover, let $A$ satisfy Assumption~\ref{intro:ass_on_A} and $p$ satisfy Assumption~\ref{ass:exponent_cont_space_entr}. Then, there exists a unique entropy solution to the system \eqref{eq:mainequation}--\eqref{eq:boundaryconditions} in the sense of Definition~\ref{def:entropysolution}.
\end{thm}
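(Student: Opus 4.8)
The plan is to prove existence by a double approximation--and--compactness scheme and uniqueness by a direct comparison of two entropy solutions through the entropy inequality itself. For existence, I would first pick $f_n\in L^\infty(\Omega_T)$, $u_{0,n}\in L^\infty(\Omega)$ with $f_n\to f$ in $L^1(\Omega_T)$, $u_{0,n}\to u_0$ in $L^1(\Omega)$, and $\|f_n\|_{L^1(\Omega_T)}\le\|f\|_{L^1(\Omega_T)}$, $\|u_{0,n}\|_{L^1(\Omega)}\le\|u_0\|_{L^1(\Omega)}$. Since these data are square integrable, the existence theory for weak solutions of \cite{bulicek2021parabolic} --- which already works under \ref{ass:cont_entr}, i.e.\ log-H\"older continuity in space and no time regularity of $p$ --- provides weak solutions $u_n\in L^{p(t,x)}(0,T;W^{1,p(t,x)}_0(\Omega))\cap L^\infty(0,T;L^2(\Omega))$ with $\partial_t u_n\in L^{p'(t,x)}(0,T;W^{-1,p'(t,x)}(\Omega))$. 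Testing the $u_n$-equation with $T_1(u_n)$ and with $T_k(u_n)$ --- admissible after the standard time-regularization that turns $\int_0^t\langle\partial_t u_n,T_k(u_n)\rangle$ into $\int_\Omega G_k(u_n(t))-\int_\Omega G_k(u_{0,n})$ --- and using \ref{intro:assumA_vanish} (so that $A(t,x,\nabla_x u_n)\cdot\nabla_x T_k(u_n)=A(t,x,\nabla_x T_k(u_n))\cdot\nabla_x T_k(u_n)$) together with the coercivity in \ref{intro:ass_on_A:coercgr}, I obtain, uniformly in $n$,
\begin{equation*}
\esssup_{t\in(0,T)}\int_\Omega|u_n(t,x)|\diff x+\int_{\Omega_T}|\nabla_x T_k(u_n)|^{p(t,x)}\diff x\diff t\le C(k+1),
\end{equation*}
whence $|\{|u_n|>k\}|\le C/k$ and, combining this with the gradient modular bound exactly as in the constant-exponent case, $|\{|\nabla_x u_n|>\lambda\}|\to0$ as $\lambda\to\infty$ uniformly in $n$.

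\textbf{Compactness and passage to the limit.} The truncations satisfy, in the appropriate weak sense, $\partial_t T_k(u_n)=\DIV_x A(t,x,\nabla_x T_k(u_n))+\chi_k(u_n)f_n$, with the right-hand side bounded in $L^1(\Omega_T)+L^{p'(t,x)}(0,T;W^{-1,p'(t,x)}(\Omega))$; a nonlinear Aubin--Lions type compactness argument for $T_k(u_n)$ (of the kind recalled in Appendix~\ref{Ap2}) then yields a subsequence with $u_n\to u$ a.e.\ in $\Omega_T$ and in $L^1(\Omega_T)$, hence $T_k(u_n)\rightharpoonup T_k(u)$ in $L^{p(t,x)}(0,T;W^{1,p(t,x)}_0(\Omega))$ and $A(t,x,\nabla_x T_k(u_n))\rightharpoonup\overline{A_k}$ in $L^{p'(t,x)}(\Omega_T;\R^d)$, the limits having the integrability required by Definition~\ref{def:entropysolution}. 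The crucial step is the identification $\overline{A_k}=A(t,x,\nabla_x T_k(u))$. I would obtain it by the Minty--Browder monotonicity argument: one proves $\limsup_n\int_{\Omega_T}A(t,x,\nabla_x T_k(u_n))\cdot\nabla_x T_k(u_n)\le\int_{\Omega_T}\overline{A_k}\cdot\nabla_x T_k(u)$ by testing the $u_n$-equation with $T_\delta\big(u_n-(T_k(u))_\mu\big)$, where $(\cdot)_\mu$ denotes the Landes time-mollification, and then letting $n\to\infty$, $\mu\to\infty$ and $\delta\to0$ in this order, the parabolic contribution being non-negative in the limit by the time-mollified energy identity; monotonicity \ref{intro:assumA_mono} and the Carath\'eodory continuity \ref{intro:ass_on_A:continuity} of $A$ then close the argument (and in fact give $\nabla_x T_k(u_n)\to\nabla_x T_k(u)$ a.e.). It is essential that one mollifies \emph{only in the time variable}: the log-H\"older continuity \ref{ass:cont_entr} guarantees density of $C^\infty$ functions and the needed Poincar\'e and truncation estimates in $L^{p(t,x)}(0,T;W^{1,p(t,x)}_0(\Omega))$, while the merely measurable time dependence of $p$ is never smoothed. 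Finally, testing the $u_n$-equation with $T_k(u_n-\phi)$, $\phi\in C^\infty_c((-\infty,T)\times\Omega)$ --- again via time-mollification to treat $\int\partial_t u_n\,T_k(u_n-\phi)$, which yields precisely the two $G_k$-terms in \eqref{ineq:inequalityinentropydefinition} --- and using weak lower semicontinuity of the convex $G_k$-term, one passes to the limit $n\to\infty$ and arrives at \eqref{ineq:inequalityinentropydefinition}; thus $u$ is an entropy solution.

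\textbf{Uniqueness.} Let $u_1,u_2$ be entropy solutions for the same data. Following the scheme of \cite{MR1436364} (cf.\ also \cite{gwiazda2015renormalized}) adapted here, I would insert in the entropy inequality for $u_1$ the test function $(u_2)_\mu$ (a Landes time-mollification of $u_2$, to be first approximated by admissible $C^\infty_c$ functions) and symmetrically $(u_1)_\mu$ in the entropy inequality for $u_2$, and add the two inequalities. Sending $\mu\to\infty$, the parabolic terms $\int_\Omega G_k(u_i-(u_j)_\mu)$ and $\int_0^t\int_\Omega T_k(u_i-(u_j)_\mu)\,\partial_t(u_j)_\mu$ combine, via the integration-by-parts identity for the time-mollification, into a quantity bounding $\int_\Omega G_k\big((u_1-u_2)(t,x)\big)\diff x$ from below; the flux terms combine into $-\int_0^t\int_\Omega\big(A(t,x,\nabla_x u_1)-A(t,x,\nabla_x u_2)\big)\cdot\nabla_x T_k(u_1-u_2)$, which is $\le0$ by monotonicity \ref{intro:assumA_mono}; and the right-hand sides cancel since the data coincide. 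Dividing by $k$, letting $k\to\infty$ and using $k^{-1}G_k(s)\to|s|$ together with the level-set estimates to control the remainder terms (as in \cite{MR1436364,gwiazda2015renormalized}), one gets $\int_\Omega|(u_1-u_2)(t,x)|\diff x\le0$ for a.e.\ $t\in(0,T)$, hence $u_1=u_2$.

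\textbf{Main obstacle.} The technical heart --- and the place where the absence of any time regularity of $p$ is felt --- is the identification of the nonlinear flux $\overline{A_k}=A(t,x,\nabla_x T_k(u))$ and, in the uniqueness part, the symmetrization of the parabolic terms: both rest on a time-mollification / integration-by-parts machinery that must be run \emph{without ever mollifying in the spatial variable}, carefully combined with the monotonicity of $A$ and with the density and embedding properties of the variable exponent spaces secured by \ref{ass:cont_entr}. Making the iterated limits rigorous --- $n\to\infty$, then $\mu\to\infty$, then $\delta\to0$ (resp.\ $k\to\infty$) --- with every remainder term controlled by the truncation-level estimates, is the bulk of the work.
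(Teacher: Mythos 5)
Your skeleton (regularize the data, get uniform $L^\infty L^1$ and truncation bounds, extract a.e.\ convergence, identify the flux by monotonicity, prove uniqueness \`a la \cite{MR1436364}) matches the paper's, but two of your key steps would fail as described, and both failures come from the same source: the exponent $p$ is only \emph{measurable} in time. First, your flux identification tests the equation against $T_\delta\bigl(u_n-(T_k(u))_\mu\bigr)$ with $(\cdot)_\mu$ the Landes \emph{time}-mollification, and you assert that it is essential to mollify \emph{only in time}. This is exactly backwards for the present setting. The gradient of the time-mollified function is a time average $\mu\int_{-\infty}^te^{\mu(s-t)}\nabla_x T_k(u)(s,x)\diff s$, and to place it in $L^{p(t,x)}(\Omega_T;\R^d)$ one must estimate $|\nabla_x T_k(u)(s,x)|^{p(t,x)}$ with the exponent taken at time $t$ but the function at time $s$; with no continuity of $p$ in $t$ there is no comparison between $|\xi|^{p(t,x)}$ and $|\xi|^{p(s,x)}$, so $(T_k(u))_\mu$ need not be an admissible test function at all (take $p$ jumping between $3/2$ and $2$ on a fat Cantor set in time). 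The paper's Lemma~\ref{lem:limitoftheproductinpowermu} is built precisely to avoid this: it mollifies in \emph{space} (harmless by log-H\"older continuity, Proposition~\ref{prop:convergence_of_convolution_in_musielak_orlicz}), applies the time regularization $\mathcal{R}^\alpha$ only to uniformly bounded truncated quantities and removes it first by dominated convergence, and replaces the comparison of $u_n$ with its limit by an $L^\infty$-truncation Cauchy argument comparing $u^n$ with $u^m$ (followed by Chacon's biting lemma to cope with the mere $L^1$-bound on $A^n\cdot\nabla_x u^n$, a step you also skip).

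Second, your uniqueness argument doubles two arbitrary entropy solutions, inserting $(u_2)_\mu$ into the entropy inequality for $u_1$ and conversely. Besides the same inadmissibility of the Landes mollification, an entropy solution in the sense of Definition~\ref{def:entropysolution} only has $\nabla_x T_k(u_2)\in L^{p(t,x)}$ for each $k$ and no time derivative in any dual space, so $(u_2)_\mu$ cannot be reached by admissible test functions $\phi$ with $\partial_t\phi$ controlled, and the symmetric combination of the two parabolic terms cannot be justified. This is why \cite{MR1436364}, and the paper, prove uniqueness asymmetrically: one compares an arbitrary entropy solution $u^1$ with the solution $u$ \emph{constructed by approximation}, using as test functions space--time mollifications of $T_{m,\varepsilon}(u^n)$ built from the bounded approximations $u^n$, for which the renormalized equation \eqref{eq:renormalizedequationforun} and the bound on the $T''_{m,\varepsilon}$-term (which you do not address, and which consumes a substantial part of Sections~\ref{S3}--\ref{S4}) are available. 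As written, your proposal establishes uniqueness only modulo an argument that is not available in this generality.
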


To finish this part, we want to emphasize once again, that for the constant exponent~$p$ we re-proved the classical and up-to-date optimal results. The main novelty of our work is that we do allow the exponent~$p$ to depend on the spatial and the time variable. However, and contrary to all the previous papers, we do not consider any smoothness of the exponent~$p$ with respect to the time variable, and we require only the classical (and maybe unavoidable) assumption on the log-H\"{o}lder continuity with respect to the spatial variable.

\section{Existence of a weak and an entropy solution}
\label{S3}

In this section, we provide the proof of the existence of a weak solution stated in the Theorem~\ref{THM_W} and an entropy solution stated in the Theorem~\ref{THM_E}. Since the vast majority of the considerations are the same in both cases, we do not distinguish between them. Instead, let us comment here on what differs. The bigger lower bound on the exponent $p(t, x)$ from \ref{ass:usual_bounds_exp}, when compared to \ref{ass:usual_bounds_exp_entr}, is needed to prove Lemma~\ref{lem:normboundsgradandoperator}, which gives weak compactness in the Lebesgue space $L^a$ for some $a > 1$ (see Remark \ref{rem:boundednessofnalbaxuandoperator}). This allows us to pass to the limit in the weak formulation for the approximation. As entropy solutions are defined with a truncation operator~\eqref{def:trunc}, we can establish the bounds for them much easier in the form of the Lemma \ref{lem:boundsontruncations}.

Before starting the proof, we recall the standard definition of the mollification, and we emphasize how we distinguish between mollification with respect to the spatial and the time variable.
Assume that $\eta:\R^d \to \R$ is a standard regularizing kernel, i.e., $\eta$ is a smooth, non-negative radially symmetric function, compactly supported in a ball of radius one and fulfills $\int_{\mathbb{R}^d} \eta(x) \diff x = 1$. Then, we set $\eta_{\kappa}(x) = \frac{1}{\kappa^d} \eta\left(\frac{x}{\kappa}\right)$ and for arbitrary $u: \R^d \times [0,T] \to \mathbb{R}$, we define
\begin{equation}\label{res:mol_in_sp}
u^{\kappa}(t,x) = \int_{\R^d} \eta_{\kappa}(x-y) u(t,y) \diff y.
\end{equation}
Similarly,
assume that $\zeta:\R \to \R$ is a standard regularizing kernel, i.e., $\zeta$ is a smooth, non-negative function, compactly supported in a ball of radius one and fulfills $\int_{\mathbb{R}} \zeta(t) \diff t = 1$. Then, we set $\zeta_{\alpha}(t) = \frac{1}{\alpha} \zeta\left(\frac{t}{\alpha}\right)$ and for arbitrary $u: \R \times \Omega \to \R$, we define $\mathcal{R}^{\alpha}u: \R \times \Omega \to \R$ as
\begin{equation}\label{res:mol_in_ti}
\mathcal{R}^{\alpha}u(t,x) = \int_{\R} \zeta_{\alpha}(t-s)\, u(s,x) \diff s.
\end{equation}


To finish this introductory part, we formulate and prove the technical result for the variable exponent needed in what follows.
\begin{lem}\label{lem:local_bounds}
    Suppose that $p$ satisfies Assumption~\ref{ass:exponent_cont_space} or~\ref{ass:exponent_cont_space_entr}. Then, for any $\varepsilon > 0$, there exists a radius $r_\varepsilon > 0$ and an open, finite covering $\{\mathcal{B}^{i}_{r_\varepsilon}\}_{i=1}^N$ of $\Omega$ with balls of radii $r_\varepsilon$, such that if we define
    $$
    q_i(t) := \inf_{\mathcal{B}^i_r}p(t, x), \quad r_i(t) := \sup_{\mathcal{B}^i_r}p(t, x),
    $$
    then
    \begin{align}\label{ineq:local_bounds}
        p_{\mathrm{min}}\leq q_i(t) \leq p(t, x) \leq r_i(t) < q_i(t) + \varepsilon, \text{ on }(0, T) \times (\mathcal{B}^i_{r_\varepsilon} \cap \Omega).
    \end{align}
\end{lem}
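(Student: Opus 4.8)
The plan is to exploit the log-H\"older continuity of $p$ in the spatial variable, which is \emph{uniform in time} (the constant $C$ in \ref{ass:cont}/\ref{ass:cont_entr} does not depend on $t$), so that a single radius works simultaneously for all $t\in[0,T]$. First I would fix $\varepsilon>0$ and choose $r_\varepsilon>0$ small enough that
$$
-\frac{C}{\log r_\varepsilon} < \frac{\varepsilon}{2},
$$
which is possible since $-1/\log r \to 0^+$ as $r\to 0^+$; here $C$ is the log-H\"older constant. We also require $r_\varepsilon<1$ so that the hypothesis $0<|x-y|<1$ in the log-H\"older bound is automatically satisfied whenever $x,y$ lie in a common ball of radius $r_\varepsilon$.

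Next, since $\overline{\Omega}$ is compact (as $\Omega$ is a bounded Lipschitz domain), the open cover $\{\mathcal{B}_{r_\varepsilon}(x)\}_{x\in\overline{\Omega}}$ of $\overline{\Omega}$ by balls of radius $r_\varepsilon$ admits a finite subcover $\{\mathcal{B}^i_{r_\varepsilon}\}_{i=1}^N$, which in particular covers $\Omega$. For each $i$ and each fixed $t\in(0,T)$ define $q_i(t):=\inf_{\mathcal{B}^i_{r_\varepsilon}\cap\Omega}p(t,\cdot)$ and $r_i(t):=\sup_{\mathcal{B}^i_{r_\varepsilon}\cap\Omega}p(t,\cdot)$ (if the infimum/supremum over essential values is preferred, one uses the $\esssup$/$\essinf$ — but since the statement writes $\inf$/$\sup$ I would keep pointwise values, noting $p$ is defined pointwise). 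The inequalities $p_{\min}\le q_i(t)\le p(t,x)\le r_i(t)$ for $x\in\mathcal{B}^i_{r_\varepsilon}\cap\Omega$ are then immediate from \eqref{sta1} and the definitions.

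The only substantive point is the strict bound $r_i(t)<q_i(t)+\varepsilon$. For any two points $x,y\in\mathcal{B}^i_{r_\varepsilon}\cap\Omega$ with $x\ne y$ we have $0<|x-y|<2r_\varepsilon$; if $2r_\varepsilon<1$ the log-H\"older estimate gives $|p(t,x)-p(t,y)|\le -C/\log|x-y|\le -C/\log(2r_\varepsilon)$, so shrinking $r_\varepsilon$ further if needed so that $-C/\log(2r_\varepsilon)<\varepsilon/2$, we obtain $\sup_{x,y}|p(t,x)-p(t,y)|\le \varepsilon/2$ over that ball, uniformly in $t$. Taking the supremum over $x$ and infimum over $y$ yields $r_i(t)-q_i(t)\le \varepsilon/2 < \varepsilon$, hence $r_i(t)<q_i(t)+\varepsilon$. (Alternatively, instead of adjusting for the factor $2$, one may simply cover $\Omega$ by balls of radius $r_\varepsilon/2$ so that any two points of a ball are at distance $<r_\varepsilon$.) I expect the main — and really the only — obstacle to be purely bookkeeping: making sure the chosen radius controls the oscillation on balls of the stated radius (the factor-of-two subtlety between the radius of a ball and the diameter over which points range), and confirming that the uniformity in $t$ of the log-H\"older constant is what makes $q_i,r_i$ well-behaved functions of $t$ alone with the estimate holding for every $t$ at once.
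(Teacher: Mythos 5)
Your proposal is correct and follows essentially the same route as the paper: the paper's own proof is a two-line appeal to compactness and the uniform-in-time log-H\"older modulus, and you simply supply the bookkeeping (the radius-versus-diameter factor of two, and choosing the oscillation bound $\varepsilon/2$ so that the strict inequality $r_i(t)<q_i(t)+\varepsilon$ holds). No gaps.
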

\begin{proof}
    We can cover $\Omega$ with a finite covering, and by \ref{ass:cont} or \ref{ass:cont_entr} respectively,  we may find a radius~$r_\varepsilon$, such that
    $$
    \sup_{\mathcal{B}^i_r}p(t, x) - \inf_{\mathcal{B}^i_r}p(t, x) \leq \varepsilon.
    $$
\end{proof}

\subsection{Approximate problem}

To begin the proof, we consider the approximate problem of the form
\begin{align}\label{eq:approxequation}
    \p_t u^n(t,x) - \DIV_x A(t, x, \nabla_x u^n) = f^n(t,x)
\end{align}
completed with the homogeneous Dirichlet boundary conditions and the initial conditions $u^n(0, x) = u_0^n(x)$. Here, $f^n\in L^\infty(\Omega_T)$, $u_0^n\in L^\infty(\Omega)$ and the sequences $\{f^n\}_{n\in\N}$, $\{u_0^n\}_{n\in \N}$ are taken in such a way that
\begin{align}
    f^n&\overset{\ast}{\rightharpoonup} f \quad\,\,\text{  weakly$^{\ast}$ in }\mathcal{M}(\Omega_T),\label{app:strongconvergenceoffM}\\
    u_0^n&\overset{\ast}{\rightharpoonup} u_0 \quad\text{ weakly$^{\ast}$ in }\mathcal{M}(\Omega),\label{app:strongconvergenceofboundarydataM}
\end{align}
so that we also have
\begin{equation}\label{L1E}
\sup_{n\in \mathbb{N}} \left(\int_{\Omega} |u_0^n|\diff x + \int_{\Omega_T} |f^n|\diff x \diff t\right) \le C<\infty.
\end{equation}
Moreover, in case of an entropy solution, we strengthen the above convergence results to
\begin{align}
    f^n&\longrightarrow f \quad\,\,\text{  strongly in }L^1(\Omega_T),\label{app:strongconvergenceoff}\\
    u_0^n&\longrightarrow u_0 \quad\text{ strongly in }L^1(\Omega)\label{app:strongconvergenceofboundarydata}.
\end{align}
For this approximation scheme, we can recall the following existence theorem for the approximative problem~\eqref{eq:approxequation}.
\begin{prop}\label{app:propositionofexistence}\textbf{\textup{(Theorem 1.23, Lemma 4.2, \cite{bulicek2021parabolic})}}
Let the operator~$A$ satisfy Assumptions~\ref{intro:ass_on_A} and let the exponent $p$ satisfy Assumption~\ref{ass:exponent_cont_space} or~\ref{ass:exponent_cont_space_entr}. Then there exists $u^n\in L^1((0, T); W^{1, 1}_0(\Omega))\cap L^\infty((0, T); L^2(\Omega))$ and \mbox{$\nabla_x u^n \in L^{p(t, x)}(\Omega_T; \R^d)$}, $A(t, x, \nabla_x u^n)\in L^{p'(t,x)}(\Omega_T; \R^d)$, such that
\begin{equation}\label{app:weakformulation}
\begin{split}
&-\int_{\Omega_T} u^n(t,x) \partial_t \varphi(t,x)  \diff x \diff t
- \int_{\Omega} u_0^n(x)\varphi(0,x) \diff x + \\
&\qquad \qquad \qquad \qquad \qquad+
\int_{\Omega_T} A(t,x,\nabla_x u^n) \cdot \nabla_x \varphi(t,x)  \diff x \diff t =
\int_{\Omega_T} f^n(t,x) \varphi(t,x)  \diff x \diff t
\end{split}
\end{equation}
for any $\varphi\in C^\infty_0([0, T)\times\Omega)$. Moreover, the following energy equality holds for almost all $t\in (0,T)$ (see \eqref{def:primitiveoftruncation} and \eqref{def:trunc} for the definition of $G_k$ and $T_k$ respectively)
\begin{equation}\label{app:global_energy_equality}
\begin{split}
&\int_{\Omega} \left[G_k(u^n(t,x)) - G_k(u_0^n(x)) \right] \diff x=\\
&  \qquad =
- \int_0^t \int_{\Omega} A(s,x, \nabla_x u^n) \cdot \nabla_x\left[T_k(u^n(s,x)) \right] \diff x \diff s + \int_0^t \int_{\Omega} f^n(s,x) \,T_k(u^n(s,x)) \, \diff x \diff s.
\end{split}
\end{equation}
\end{prop}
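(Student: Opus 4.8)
The statement is quoted from \cite{bulicek2021parabolic}, so the plan is to recall the scheme employed there: the classical monotone-operator (Galerkin) method, adapted to the variable-exponent setting and supplemented by a time-mollification device that compensates for the missing regularity of $p$ in time. First I would fix a Galerkin basis $\{w_j\}_{j\in\N}$ of $W^{1,\infty}_0(\Omega)$ — for instance the Dirichlet eigenfunctions of the Laplacian — which is dense in all the function spaces that will appear, and seek $u^N(t,x)=\sum_{j=1}^N c^N_j(t)\,w_j(x)$ solving the finite-dimensional system obtained by testing \eqref{eq:approxequation} with $w_1,\dots,w_N$ and by projecting $u_0^n$ onto $\mathrm{span}\{w_1,\dots,w_N\}$. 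Carathéodory theory for ODEs yields local solvability, and the a priori estimate below extends the solution to the whole interval $(0,T)$.

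The a priori estimates come from testing the Galerkin system with $u^N$ itself: using \ref{intro:ass_on_A:coercgr} together with $h\in L^\infty(\Omega_T)$, $f^n\in L^\infty(\Omega_T)$, $u_0^n\in L^\infty(\Omega)$ and Gronwall's lemma, I would obtain, uniformly in $N$,
$$
\sup_{t\in(0,T)}\|u^N(t)\|_{L^2(\Omega)}^2 + \int_{\Omega_T}\!\Big(|\nabla_x u^N|^{p(t,x)}+|A(t,x,\nabla_x u^N)|^{p'(t,x)}\Big)\diff x\diff t \le C .
$$
Since $1<p_{\min}\le p(t,x)\le p_{\max}<\infty$, the spaces $L^{p(t,x)}(\Omega_T;\R^d)$ and $L^{p'(t,x)}(\Omega_T;\R^d)$ are reflexive, so up to a subsequence $u^N\wstar u$ in $L^\infty(0,T;L^2(\Omega))$, $\nabla_x u^N\rightharpoonup \nabla_x u$ in $L^{p(t,x)}(\Omega_T;\R^d)$ and $A(t,x,\nabla_x u^N)\rightharpoonup \overline{A}$ in $L^{p'(t,x)}(\Omega_T;\R^d)$; the embeddings $L^{p(t,x)}(\Omega_T)\hookrightarrow L^{p_{\min}}(\Omega_T)\hookrightarrow L^1(\Omega_T)$ then give the claimed $L^1((0,T);W^{1,1}_0(\Omega))$-regularity. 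From the equation $\{\p_t u^N\}$ is bounded in a suitable dual space, so an Aubin–Lions-type compactness argument yields $u^N\to u$ strongly in $L^2(\Omega_T)$ and a.e. in $\Omega_T$, and passing to the limit in the Galerkin identities produces \eqref{app:weakformulation} with $A(t,x,\nabla_x u^n)$ replaced by $\overline{A}$.

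It then remains to identify $\overline{A}=A(t,x,\nabla_x u)$ by Minty's monotonicity trick. The genuinely delicate point — and the reason this is harder than the constant-exponent case — is that, $p$ being only measurable in time, $u$ is not itself an admissible test function, and the naive chain rule $\tfrac{d}{dt}\|u\|_{L^2}^2=2\langle\p_t u,u\rangle$ is not directly available because $L^{p(t,x)}(0,T;W^{1,p(t,x)}_0(\Omega))$ does not carry the usual Gelfand-triple structure. The remedy (the technical heart of \cite{bulicek2021parabolic}) is to work with the time-mollifications $\mathcal{R}^\alpha u$ of \eqref{res:mol_in_ti}: one tests the limit equation with mollified objects built from $\mathcal{R}^\alpha u$, establishes an energy identity for the mollified solution, and sends $\alpha\to 0$, using that $\mathcal{R}^\alpha$ commutes with $\nabla_x$ and — thanks to the log-Hölder continuity \ref{ass:cont}/\ref{ass:cont_entr} — is uniformly bounded on $L^{p(t,x)}(\Omega_T)$ and that smooth functions are dense in $W^{1,p(t,x)}_0(\Omega)$. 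This yields $\limsup_N\int_{\Omega_T}A(t,x,\nabla_x u^N)\cdot\nabla_x u^N\le\int_{\Omega_T}\overline{A}\cdot\nabla_x u$; combined with \ref{intro:assumA_mono} it gives $0\le\int_{\Omega_T}(\overline{A}-A(t,x,w))\cdot(\nabla_x u-w)\diff x\diff t$ for every $w\in L^{p(t,x)}(\Omega_T;\R^d)$, and the choice $w=\nabla_x u\pm\lambda z$ with $\lambda\downarrow 0$, $z$ arbitrary, forces $\overline{A}=A(t,x,\nabla_x u)$ a.e.

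Finally, for the energy equality \eqref{app:global_energy_equality} I would test the equation with a time-mollified version of $T_k(u^n)$: since $T_k$ is Lipschitz and bounded, $T_k(u^n)\in L^{p(t,x)}(\Omega_T)$ with $\nabla_x T_k(u^n)=\chi_k(u^n)\nabla_x u^n$, and $G_k$ is its primitive of at most quadratic growth, so the chain rule — again legitimised through $\mathcal{R}^\alpha$ and the passage $\alpha\to0$, the quadratic growth of $G_k$ keeping the boundary terms under control in $L^\infty(0,T;L^2(\Omega))$ — delivers exactly \eqref{app:global_energy_equality} for a.e. $t\in(0,T)$. The main obstacle throughout is not any single estimate but this recurring need to justify chain-rule and integration-by-parts identities for functions living in a time-variable-exponent energy space; the time-mollification argument is precisely what makes those identities rigorous.
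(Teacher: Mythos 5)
This proposition is not proved in the paper at all: it is imported verbatim from \cite{bulicek2021parabolic} (Theorem 1.23 and Lemma 4.2 there), so there is no in-paper argument to compare against, and your sketch is a faithful outline of that reference's strategy --- Galerkin approximation, coercivity estimates, weak compactness in the reflexive spaces $L^{p(t,x)}$, Aubin--Lions, and a Minty argument whose integration-by-parts steps must be legitimised by mollification precisely because $p$ is merely measurable in time and no Gelfand-triple structure is available. The one imprecision worth flagging is your claim that the time mollification $\mathcal{R}^\alpha$ is uniformly bounded on $L^{p(t,x)}(\Omega_T)$ ``thanks to the log-H\"older continuity'': that hypothesis is purely spatial, and since $p$ may be discontinuous in $t$ the operator $\mathcal{R}^\alpha$ need not preserve $L^{p(t,x)}(\Omega_T)$; the cited work instead obtains modular convergence only for the spatial mollification (where log-H\"older continuity does the work, cf.\ Proposition~\ref{prop:convergence_of_convolution_in_musielak_orlicz}) and treats the time direction through duality pairings and the equation itself rather than through boundedness of $\mathcal{R}^\alpha$ on the variable-exponent space.
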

Now, our goal is to  let $n\to \infty$ in this approximate problem and to show that the limit is in fact a weak or an entropy solution respectively.

\subsection{Uniform bounds independent of \texorpdfstring{$n$}{n}}
We start by establishing some bounds, which are uniform with respect to $n$ and which depend only on the norms $\|f^n\|_1$ and $\|u^n_0\|_1$. These are, however, quantities controlled uniformly due to \eqref{L1E}. 
\begin{lem}\label{lem:boundsontruncations}
Consider the situation as in Proposition~\ref{app:propositionofexistence} (and recall the definition of $\chi_k$ in  \eqref{chik}). Then, for every $k\in \mathbb{N}$ the following estimate  holds
\begin{align}\label{app:energytruncl1}
\begin{aligned}
    \sup_{t\in(0, T)}\|u^n(t)\|_1 + \int_{\Omega_T}|\nabla_x T_k(u^n)|^{p(t,x)} &+ |A(t, x, \nabla_x u^n)\chi_k|^{p'(t,x)}\diff x\diff t \\
    &\leq C(c, k, \|f^n\|_1, \|u_0^n\|_1, \|h\|_{\infty}).
\end{aligned}
\end{align}
In addition, for any $\lambda > 1$, we have
\begin{align}\label{app:fullenergyl1}
    \int_0^t\int_{\Omega}\frac{A(s, x, \nabla_x u^n) \cdot \nabla_x u^n}{(1 + |u^n|)^\lambda}\diff x\diff s\leq C\frac{\lambda}{\lambda - 1}\left(\|f^n\|_1 + \|u^n_0\|_1+1\right).
\end{align}
Furthermore, the estimate~\eqref{app:energytruncl1}, and the standard diagonal procedure imply, that there exists a subsequence $u^n$ (which we do not relabel), such that for all $k\in \mathbb{N}$, there are $B_k$ and $v_k$ fulfilling
\begin{align}
    A(t, x, \nabla_x T_k(u^n)) &\rightharpoonup B_k &&\text{ weakly in }L^{p'(t, x)}(\Omega_T; \R^d),\label{weaklimitoperatortrunc}\\
     \nabla_x T_k(u^n) &\rightharpoonup v_k &&\text{ weakly in }L^{p(t, x)}(\Omega_T; \R^d).\label{weaklimittrunaction}
\end{align}
\end{lem}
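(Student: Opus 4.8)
The plan is to exploit the energy equality~\eqref{app:global_energy_equality} with suitable test functions. For the first estimate~\eqref{app:energytruncl1}, I would use~\eqref{app:global_energy_equality} as it stands. Since $G_k \ge 0$ and $G_k(z) \le k|z|$, the term $\int_\Omega G_k(u^n(t)) \diff x$ is nonnegative and $\int_\Omega G_k(u_0^n) \diff x \le k\|u_0^n\|_1$; the right-hand side source term is bounded by $k\|f^n\|_1$. The key point is the gradient term: on $\{|u^n| \le k\}$ we have $\nabla_x T_k(u^n) = \nabla_x u^n$, so by~\ref{intro:assumA_vanish} (hence $A(t,x,\nabla_x u^n)\chi_k = A(t,x,\nabla_x T_k(u^n))$ up to the null set $\{|u^n| = k\}$) and the structural inequality~\ref{intro:ass_on_A:coercgr},
$$
|\nabla_x T_k(u^n)|^{p(t,x)} + |A(t,x,\nabla_x u^n)\chi_k|^{p'(t,x)} \le c\, A(t,x,\nabla_x u^n)\cdot \nabla_x T_k(u^n) + h(t,x).
$$
Integrating this over $\Omega_T$ and using $\int_0^T\int_\Omega A\cdot\nabla_x T_k(u^n) \le k(\|f^n\|_1 + \|u_0^n\|_1)$ from~\eqref{app:global_energy_equality} (with $t = T$, using absolute continuity of $t\mapsto\int_\Omega G_k(u^n(t))$, which follows from $u^n \in L^\infty(0,T;L^2(\Omega))$ and the regularity of $T_k(u^n)$) yields the bound on the integral term with constant $C(c,k,\|f^n\|_1,\|u_0^n\|_1,\|h\|_\infty)$. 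To get $\sup_t\|u^n(t)\|_1$, I would instead test with a truncation at level one: from~\eqref{app:global_energy_equality} with $k=1$, the gradient energy term is nonnegative by~\ref{intro:ass_on_A:coercgr} and~\ref{intro:assumA_vanish} (dropping the $h$ term to the other side), so $\int_\Omega G_1(u^n(t)) \diff x \le \|u_0^n\|_1 + \|f^n\|_1 + \|h\|_1$ for a.e.\ $t$; combining with the elementary inequality $|z| \le G_1(z) + 1/2$ and integrating over $\Omega$ gives the desired $L^\infty(0,T;L^1(\Omega))$ bound (after adjusting the constant by $|\Omega|$).

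For the weighted estimate~\eqref{app:fullenergyl1}, I would test the weak formulation~\eqref{app:weakformulation} with $\varphi = \psi_\lambda(u^n)$ where $\psi_\lambda(z) := \mathrm{sign}(z)\big(1 - (1+|z|)^{1-\lambda}\big)/(\lambda-1)$, so that $\psi_\lambda'(z) = (1+|z|)^{-\lambda}$ and $\|\psi_\lambda\|_\infty \le 1/(\lambda-1)$. Formally this produces $\int_\Omega \Psi_\lambda(u^n(t)) - \int_\Omega \Psi_\lambda(u_0^n)$ from the time-derivative term (with $\Psi_\lambda$ the primitive of $\psi_\lambda$, again nonnegative), the term $\int_0^t\int_\Omega A(s,x,\nabla_x u^n)\cdot\nabla_x u^n\,(1+|u^n|)^{-\lambda}$ from the elliptic term, and $\int_0^t\int_\Omega f^n\psi_\lambda(u^n)$ on the right, bounded by $\|f^n\|_1/(\lambda-1)$. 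Rearranging and bounding $\Psi_\lambda(u_0^n) \le \|\psi_\lambda\|_\infty |u_0^n| \le \|u_0^n\|_1/(\lambda-1)$ gives~\eqref{app:fullenergyl1}. Since $\psi_\lambda$ is bounded and Lipschitz but not admissible as a test function directly, I would justify this either by a density/approximation argument (approximating $\psi_\lambda$ by smooth compactly supported functions, or using the fact that $\psi_\lambda(u^n)$ can be built from the truncations $T_k(u^n)$ which satisfy the required Sobolev regularity) together with Lemma~\ref{lem:steklov} or the standard Steklov averaging in time; this is the technical heart of the argument. The factor $\lambda/(\lambda-1)$ versus $1/(\lambda-1)$ is harmless: one absorbs the difference into the universal constant $C$.

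Finally, the weak convergences~\eqref{weaklimitoperatortrunc}--\eqref{weaklimittrunaction} follow immediately: by~\eqref{app:energytruncl1} the sequences $\{\nabla_x T_k(u^n)\}_n$ and $\{A(t,x,\nabla_x T_k(u^n))\}_n$ are bounded in $L^{p(t,x)}(\Omega_T;\R^d)$ and $L^{p'(t,x)}(\Omega_T;\R^d)$ respectively (using $1 < p_{\min} \le p(t,x) \le p_{\max} < \infty$, these are reflexive separable Musielak--Orlicz spaces, see Appendix~\ref{app:musielaki}), so for each fixed $k$ one extracts weakly convergent subsequences, and a diagonal argument over $k\in\mathbb{N}$ produces a single subsequence working for all $k$ simultaneously, with limits denoted $B_k$ and $v_k$.

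I expect the main obstacle to be the rigorous justification of using the nonsmooth, non-compactly-supported test functions $T_k(u^n)$ and $\psi_\lambda(u^n)$ in~\eqref{app:weakformulation}: one must handle the time-derivative term carefully (the term $\partial_t u^n$ is not a function, only $T_k(u^n) \in L^1(0,T;W^{1,1}_0(\Omega))$ is available), which requires Steklov averaging in time combined with the chain rule for the convex functions $G_k$, $\Psi_\lambda$ and a passage to the limit in the regularization parameter.
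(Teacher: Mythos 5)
Your treatment of the first estimate \eqref{app:energytruncl1} and of the weak convergences \eqref{weaklimitoperatortrunc}--\eqref{weaklimittrunaction} is essentially identical to the paper's: the paper also starts from the energy equality \eqref{app:global_energy_equality}, rewrites it as an inequality with the splitting $f^nT_k(u^n)=f^nu^n\chi_k+\mathrm{sign}(u^n)kf^n(1-\chi_k)$, invokes \ref{intro:ass_on_A:coercgr} on the set $\{|u^n|\le k\}$, and uses the elementary bounds $|u|-1\le G_1(u)\le G_k(u)\le C(k)|u|$ to extract both the modular bound and the $L^\infty_tL^1_x$ bound; the diagonal extraction at the end is the same. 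Where you genuinely diverge is the weighted estimate \eqref{app:fullenergyl1}. You propose testing \eqref{app:weakformulation} directly with $\psi_\lambda(u^n)$, $\psi_\lambda'(z)=(1+|z|)^{-\lambda}$, and you correctly identify that justifying this nonsmooth, solution-dependent test function (Steklov averaging, chain rule for $\Psi_\lambda$) is the technical heart of that route --- note also that the auxiliary Steklov lemma you cite does not exist in this paper. The paper avoids this issue entirely: it takes the already-established family of truncated energy inequalities, multiplies by the weight $(1+k)^{-1-\lambda}$, integrates over $k\in(0,\infty)$, and uses Fubini together with the identities $\int_{|u|}^\infty(1+k)^{-1-\lambda}\diff k=\lambda^{-1}(1+|u|)^{-\lambda}$ (and its companions for the remaining terms) to reconstruct exactly the weight $(1+|u^n|)^{-\lambda}$ in front of $A\cdot\nabla_xu^n$. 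This is, in effect, a rigorous disintegration of your $\psi_\lambda$ into the truncations $T_k$, so no new test function ever needs to be admitted; the price is only an elementary but slightly fiddly calculus computation. Your route is the classical Boccardo--Gallou\"et one and would work once the approximation argument you sketch is carried out, but as written it leaves that justification open, whereas the paper's argument is closed using only what Proposition~\ref{app:propositionofexistence} already provides.
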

\begin{proof}
We start with the proof of~\eqref{app:energytruncl1}. First, we rewrite \eqref{app:global_energy_equality} as
\begin{multline}\label{app:energyrewritten}
    \|G_k(u^n(t))\|_1 + \int_0^t\int_{\Omega}A(s, x, \nabla_x u^n) \cdot \nabla_x u^n\chi_k \diff x\diff s \leq \int_0^t \int_{\Omega}|f^n||u^n|\chi_k\diff x\diff s\\
    + k\int_0^t\int_{\Omega}|f^n|(1 - \chi_k)\diff x\diff s + \|G_k(u_0^n)\|_1,
\end{multline}
where $\chi_k$ is just the abbreviation of the function $\chi_k(u(t,x))$, see also~\eqref{chik}. Then, using~\ref{intro:ass_on_A:coercgr} and a trivial estimate
\begin{align*}
    |u| - 1\leq G_1(u) \leq G_k(u) \leq C(k)|u|,
\end{align*}
we get the inequality~\eqref{app:energytruncl1}.

To obtain~\eqref{app:fullenergyl1}, we fix $\lambda>1$ and multiply~\eqref{app:energyrewritten} by $(1+k)^{-1-\lambda}$, and consider $t:=T$ in ~\eqref{app:energyrewritten}. Then, we integrate with respect to $k\in (0,\infty)$. Since the first term on the left-hand side is non-negative, we are led to the following
\begin{equation}
\label{app:estimates1}
\begin{split}
    &\int_0^\infty\int_0^T\int_{\Omega}\frac{A(s, x, \nabla_x u^n)\cdot\nabla_x u^n\chi_k}{(1+k)^{1+\lambda}}\diff x\diff t\diff k
    \leq \int_0^\infty \int_{\Omega}\frac{G_k(u_0^n)}{(1+k)^{1+\lambda}}\diff x\diff k\\ &\qquad +\int_0^\infty\int_0^T\int_{\Omega}\frac{|f^n||u^n|\chi_k}{(1+k)^{1+\lambda}}\diff x\diff t\diff k  + \int_0^\infty\int_0^T\int_{\Omega}\frac{k|f^n|(1 - \chi_k)}{(1+k)^{\lambda+1}}\diff x\diff t\diff k.
\end{split}
\end{equation}
Let us evaluate the terms on the right-hand side. For arbitrary $w : \mathcal{O}\longrightarrow \R$, we employ the Fubini theorem to get
\begin{align*}
    \int_0^\infty\int_{\mathcal{O}}\frac{w(z)\chi_k(u(z))}{(1+k)^{\lambda+1}}\diff z\diff k &= \int_{\mathcal{O}}\int_{|u(z)|}^\infty\frac{w(z)}{(1+k)^{\lambda+1}} \diff k\diff z = \frac{1}{\lambda}\int_{\mathcal{O}}\frac{w(z)}{(1+|u(z)|)^{\lambda}}\diff z ,\\
\intertext{and}
    \int_0^\infty\int_{\mathcal{O}}\frac{kw(z)(1-\chi_k(u(z)))}{(1+k)^{\lambda + 1}} &= \int_{\mathcal{O}}\int_0^{|u(z)|}\frac{kw(z)}{(1+k)^{\lambda + 1}}\diff k\diff z\\ &= \frac{1}{(1-\lambda)\lambda}\int_{\mathcal{O}}\frac{w(z) + \lambda w(z)|u(z)|}{(1+|u(z)|)^\lambda} - w(z)\diff z.
\end{align*}
Furthermore, since $G_k' = T_k$ we can deduce
\begin{align*}
    \int_0^\infty \int_{\Omega}\frac{G_k(u^n_0(x))}{(1+k)^{\lambda + 1}}\diff x \diff k &= \int_{\Omega}\int_0^{|u_0^n(x)|}\int_0^\infty\frac{T_k(s)}{(1+k)^{\lambda + 1}}\diff k\diff s\diff x\\
    &= \int_{\Omega}\int_0^{|u_0^n(x)|}\frac{1}{(1-\lambda)\lambda}((1+s)^{1 - \lambda} - 1)\diff s\diff x\\
    &= \int_{\Omega}\frac{(1+|u_0^n(x)|)^{2 - \lambda} - 1}{\lambda} - \frac{|u_0^n(x)|}{(1 - \lambda)\lambda}\diff x.
\end{align*}
Inserting these three identities into~\eqref{app:estimates1} we get
\begin{align*}
    \int_0^T\int_{\Omega}\frac{A(s, x, \nabla_x u^n)\cdot \nabla_x u^n}{(1 + |u^n|)^\lambda}\diff x\diff t &\leq\int_0^T\int_{\Omega}\frac{|f^n||u^n|}{(1+|u^n)^\lambda}\diff x\diff t + \lambda\int_0^\infty \int_{\Omega}\frac{G_k(u^n_0(x))}{(1+k)^{\lambda + 1}}\diff x \diff k\\
    &\quad + \int_0^T\int_{\Omega}\frac{|f^n|}{1 - \lambda}\left(\frac{1 + \lambda|u^n|}{(1 + |u^n|)^\lambda} - 1\right)\diff x\diff t \\
    &\hspace{-30pt}= \frac{1}{1 - \lambda}\int_0^T\int_{\Omega}\frac{|f^n|}{(1+|u^n|)^\lambda}\left(1 + |u^n|\right)\diff x\diff t\\
    &\quad \hspace{-30pt} +\frac{1}{\lambda - 1}\int_0^T\int_{\Omega}|f^n|\diff x\diff t+ \int_{\Omega}(1 + |u_0^n(x)|)^{2 - \lambda} - 1 + \frac{|u_0^n(x)|}{\lambda - 1}\diff x.
\end{align*}
As $\lambda > 1$, the inequality above proves the estimate \eqref{app:fullenergyl1}.
\end{proof}
\begin{rem}
The uniform bounds~\eqref{app:energytruncl1}--\eqref{app:fullenergyl1}, the assumption on data~\eqref{L1E}, and the structural assumption on the nonlinearity~\ref{intro:ass_on_A:coercgr} directly imply the following uniform estimate
\begin{align}\label{rem:remarkonptxbounds}
    \sup_{t\in (0, T)}\|u^n(t)\|_1 + \int_{\Omega_T}\frac{|\nabla_x u^n|^{p(t, x)} + |A(t, x, \nabla_x u^n)|^{p'(t, x)}}{(1 + |u^n|)^\lambda} \diff x\diff t\leq C(\lambda)
\end{align}
for any $\lambda > 1$. Note that the right-hand side explodes as $\lambda \to 1_{+}$.
\end{rem}




The estimates above are sufficient to define the notion of an entropy solution. However, they cannot be used for the notion of a weak solution. Therefore, we use~\eqref{rem:remarkonptxbounds} and a proper interpolation technique to deduce certain bounds on $\{u^n\}_{n\in\N}$ and $\{A(t,x,\nabla_x u^n)\}_{n\in\N}$ in Sobolev--Bochner spaces.

\begin{lem}\label{lem:normboundsgradandoperator}Consider the situation as in Proposition~\ref{app:propositionofexistence} and assume that $p_{\min}>\frac{2d}{d+1}$. Then, for any $\lambda\in (1,p_{\min})$, there holds
\begin{equation}\label{app:normboundsgradandoperator}
\begin{split}
    \int_{\Omega_T} |u^n|^{p(t,x)-\lambda +\frac{p(t,x)}{d}} + |\nabla_x u^n|^{p(t,x) - \frac{\lambda d}{d+1}} + |A(t,x,\nabla_x u^n)|^{\frac{p(t,x) - \frac{\lambda d}{(d+1)}}{p(t,x)-1}}\diff x \diff t &\leq C(\lambda,\Omega_T).
\end{split}
\end{equation}
\end{lem}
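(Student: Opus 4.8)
The plan is to start from the scale-invariant estimate \eqref{rem:remarkonptxbounds}, which gives us control of $\sup_t\|u^n(t)\|_1$ together with the quantities
$$
\int_{\Omega_T}\frac{|\nabla_x u^n|^{p(t,x)}}{(1+|u^n|)^\lambda}\,\diff x\diff t \quad\text{and}\quad \int_{\Omega_T}\frac{|A(t,x,\nabla_x u^n)|^{p'(t,x)}}{(1+|u^n|)^\lambda}\,\diff x\diff t
$$
uniformly in $n$, for any $\lambda>1$. The first task is to upgrade $\sup_t\|u^n(t)\|_1$ to a genuine Lebesgue bound on $u^n$ itself. This is the classical Gagliardo--Nirenberg/Boccardo--Gallouët interpolation: from $\int |\nabla_x u^n|^{p-\lambda}$-type control (obtained from the weighted gradient estimate by a H\"older split, writing $|\nabla_x u^n|^{p-\lambda}=\frac{|\nabla_x u^n|^{p}}{(1+|u^n|)^{\lambda}}\cdot(1+|u^n|)^{\lambda}\cdot|\nabla_x u^n|^{-\lambda}$ — more precisely one splits so that the weight cancels) combined with the parabolic embedding $L^\infty_t L^1_x\cap L^1_t W^{1,1}_x\hookrightarrow L^{(d+1)/d}$, one deduces $u^n\in L^{p(t,x)-\lambda+p(t,x)/d}$ uniformly. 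The variable-exponent bookkeeping is handled via Lemma~\ref{lem:local_bounds}: on each cylinder $(0,T)\times(\mathcal B^i_{r_\varepsilon}\cap\Omega)$ one replaces $p(t,x)$ above by the constant-in-$x$ function $q_i(t)$ (losing only $\varepsilon$), runs the constant-exponent argument with the exponent $q_i(t)$ frozen in $x$ but still measurable in $t$, and sums over the finite cover. The condition $p_{\min}>\frac{2d}{d+1}$ is exactly what makes the resulting integrability exponents strictly bigger than $1$ so that the interpolation closes and the weights can be absorbed.

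Next I would derive the gradient bound $|\nabla_x u^n|\in L^{p(t,x)-\frac{\lambda d}{d+1}}$. Here one writes, pointwise,
$$
|\nabla_x u^n|^{p-\frac{\lambda d}{d+1}}=\Big(\frac{|\nabla_x u^n|^{p}}{(1+|u^n|)^\lambda}\Big)^{\!\theta}\big((1+|u^n|)^{\lambda}\big)^{\!\sigma}|\nabla_x u^n|^{\tau}
$$
and chooses the exponents $\theta,\sigma,\tau$ so that the $|\nabla_x u^n|$ powers recombine, then applies H\"older in $(t,x)$ with the weighted gradient estimate controlling one factor and the just-obtained $L^{p-\lambda+p/d}$ bound on $u^n$ controlling the other; the arithmetic is the standard Boccardo--Gallouët computation, and $p-\frac{\lambda d}{d+1}>1$ follows from $\lambda<p_{\min}$ and $p_{\min}>\frac{2d}{d+1}$. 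Finally, the bound on $A(t,x,\nabla_x u^n)$ is immediate from the one on $\nabla_x u^n$: by \ref{intro:ass_on_A:coercgr} we have $|A(t,x,\nabla_x u^n)|^{p'(t,x)}\le c\,A(t,x,\nabla_x u^n)\cdot\nabla_x u^n+h\le C(|\nabla_x u^n|^{p(t,x)}+h+1)$, so $|A(t,x,\nabla_x u^n)|^{(p-\frac{\lambda d}{d+1})/(p-1)}\lesssim |\nabla_x u^n|^{p(t,x)\cdot\frac{p-\frac{\lambda d}{d+1}}{(p-1)p'}}+\dots=|\nabla_x u^n|^{p-\frac{\lambda d}{d+1}}+\dots$, since $p(t,x)/p'(t,x)=p(t,x)-1$; the exponent $\frac{p-\frac{\lambda d}{d+1}}{p-1}$ exceeds $1$ because $p-\frac{\lambda d}{d+1}>1\ge p-1+ (1-\tfrac{\lambda d}{d+1})$ — one checks $p-\frac{\lambda d}{d+1}>p-1$ i.e. $\frac{\lambda d}{d+1}<1$, which again is $\lambda<\frac{d+1}{d}\le p_{\min}$.

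The main obstacle I anticipate is not the interpolation itself — that is classical for constant $p$ — but keeping the variable-exponent estimates uniform and honest: one cannot split H\"older exponents that vary in $x$ without care, so the whole argument must be run locally on the cover $\{\mathcal B^i_{r_\varepsilon}\}$ with $q_i(t),r_i(t)$ in place of $p$, controlling all the $\varepsilon$-errors in the exponents (using $\|u^n\|_1$ and the finite measure of $\Omega_T$ to absorb the gap between $|\cdot|^{q_i}$ and $|\cdot|^{r_i}$ via $|z|^{q_i}\le |z|^{r_i}+1$), and only at the end summing the finitely many local contributions. The delicate point is verifying that for $\varepsilon$ chosen small relative to $p_{\min}-\frac{2d}{d+1}$ all the exponents produced by the interpolation stay uniformly above $1$ on every patch, so that Young's inequality can be used to absorb the gradient factor back into the left-hand side; this is where the strict inequality $p_{\min}>\frac{2d}{d+1}$ is genuinely used rather than just $p_{\min}\ge\frac{2d}{d+1}$.
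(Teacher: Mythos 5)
Your proposal follows essentially the same route as the paper: localize via the log-H\"older covering of Lemma~\ref{lem:local_bounds} so that $p$ is $\varepsilon$-close to the $x$-independent exponent $q_i(t)$ on each ball, run the Boccardo--Gallou\"et interpolation (Gagliardo--Nirenberg applied to a suitable power of $1+|u^n|$ so that the weighted gradient bound in \eqref{rem:remarkonptxbounds} is the quantity that appears) to obtain the $L^{p(t,x)-\lambda+p(t,x)/d}$ bound on $u^n$, and then recover the bounds on $\nabla_x u^n$ and $A(t,x,\nabla_x u^n)$ by a pointwise Young/H\"older splitting against the weighted estimates; this is exactly the paper's argument, including the choice of $\varepsilon$ small relative to $\lambda_0-1$ so that the $\varepsilon$-loss in the exponent is absorbed. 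The only (harmless) deviation is that you deduce the bound on $A$ from the one on $\nabla_x u^n$ via the growth condition \ref{intro:ass_on_A:coercgr}, whereas the paper splits $|A|^{\xi}$ directly against $|A|^{p'(t,x)}/(1+|u^n|)^{\lambda}$; both work.

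Two of your side-claims about the exponents are false, though. First, $p-\frac{\lambda d}{d+1}>1$ does \emph{not} follow from $\lambda<p_{\min}$ and $p_{\min}>\frac{2d}{d+1}$: for $p$ near $\frac{2d}{d+1}$ and any $\lambda>1$ one has $p-\frac{\lambda d}{d+1}<\frac{2d}{d+1}-\frac{d}{d+1}=\frac{d}{d+1}<1$; this is precisely why Remark~\ref{rem:boundednessofnalbaxuandoperator} needs the stronger hypothesis $p_{\min}>\frac{2d+1}{d+1}$ to place $\nabla_x u^n$ in some $L^a$ with $a>1$. Second, $\frac{\lambda d}{d+1}<1$, i.e.\ $\lambda<\frac{d+1}{d}$, holds only for $\lambda$ sufficiently close to $1$, not for every $\lambda\in(1,p_{\min})$, and $\frac{d+1}{d}\le p_{\min}$ is not part of the hypotheses (for $d=2$ it fails whenever $p_{\min}\in(\frac43,\frac32)$). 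Neither claim is needed for the integral bound \eqref{app:normboundsgradandoperator} itself --- the exponents there are positive but need not exceed $1$ --- so the proof of the lemma survives, but these assertions should be deleted or corrected before they propagate into the weak-compactness step, where one must instead choose $\lambda$ close to $1$ and invoke the appropriate lower bound on $p_{\min}$.
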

\begin{proof}
We recall Lemma~\ref{lem:local_bounds} and consider the balls constructed there. Then, we fix one ball $\mathcal{B}^i_{r_\varepsilon}$ and recall the definition of $q_i(t)$ - the minimal value of $p(t,x)$ in $\mathcal{B}^i_{r_\varepsilon}$, and $r_i(t)$ - the maximal value of $p(t,x)$ in $\mathcal{B}^i_{r_\varepsilon}$. Note that Lemma~~\ref{lem:local_bounds} gives us the maximal size of balls, but we can always assume a smaller ball with the corresponding notation of the minimal values of the power-law exponent.  
Then, we recall the classical interpolation inequality
\begin{equation}\label{intl}
\|v\|^s_{L^s(\mathcal{B}^i_{r_\varepsilon})}\le C\left(\|v\|^s_{L^{\frac{q_i(t)}{q_i(t)-\lambda}}(\mathcal{B}^i_{r_\varepsilon})}+\|v\|^{s-q_i(t)}_{L^{\frac{q_i(t)}{q_i(t)-\lambda}}
(\mathcal{B}^i_{r_\varepsilon})}\|\nabla_x v\|^{q_i(t)}_{L^{q_i(t)}(\mathcal{B}^i_{r_\varepsilon})} \right),
\end{equation}
where $\lambda>1$ is sufficiently close to $1$ and $s=s(t)$ is defined as
$$
s:=q_i(t) + \frac{q^2_i(t)}{d(q_i(t)-\lambda)}.
$$
At this point, we needed to consider $q_i(t)>\frac{2d}{d+1}$, which is automatically met, since we have assumed $p_{\min}>\frac{2d}{d+1}$.

Next, we apply the interpolation inequality above to the function
$$
v:=(1+|u|^n)^{\frac{q_i(t)-\lambda}{q_i(t)}}.
$$
Doing so, and integrating with respect to the time $t\in (0,T)$, we deduce
\begin{equation}
\label{numero}
\begin{aligned}
\int_0^T &\int_{\mathcal{B}^i_{r_\varepsilon}}|u^n|^{q_i(t)-\lambda +\frac{q_i}{d}}\diff x \diff t \le \int_0^T \int_{\mathcal{B}^i_{r_\varepsilon}} \left((1+|u^n|)^{\frac{q_i(t)-\lambda}{q_i(t)}}\right)^s \diff x \diff t  =\int_0^T \|v\|_{L^s(\mathcal{B}^i_{r_\varepsilon})}^s \diff s\\
&\le C\int_0^T\|v\|^s_{L^{\frac{q_i(t)}{q_i(t)-\lambda}}(\mathcal{B}^i_{r_\varepsilon})}+\|v\|^{s-q_i(t)}_{L^{\frac{q_i(t)}{q_i(t)-\lambda}}
(\mathcal{B}^i_{r_\varepsilon})}\|\nabla_x v\|^{q_i(t)}_{L^{q_i(t)}(\mathcal{B}^i_{r_\varepsilon})} \diff t\\
&=C\int_0^T\|(1+|u|^n)\|^{q_i(t)-\lambda + \frac{q_i(t)}{d}}_{L^{1}(\mathcal{B}^i_{r_\varepsilon})}\diff t\\
&\quad +C\int_0^T\|(1+|u|^n)\|^{\frac{q_i(t)}{d}}_{L^{1}(\mathcal{B}^i_{r_\varepsilon})}\int_{\mathcal{B}^i_{r_\varepsilon}}
\left(\frac{q_i(t)-\lambda}{q_i(t)}\right)^{q_i(t)}\frac{|\nabla_x u^n|^{q_i(t)}}{(1+|u|^n)^{\lambda}} \diff x\diff t\\
&\le C\left(1+ \int_0^T\int_{\mathcal{B}^i_{r_\varepsilon}}\frac{|\nabla_x u^n|^{p(t,x)}}{(1+|u|^n)^{\lambda}} \diff x\diff t\right)\le C(\lambda),
\end{aligned}
\end{equation}
where for the last two inequalities we have used the \`{a}~priori bound~\eqref{rem:remarkonptxbounds}. 

At this point we are now prepared to prove~\eqref{app:normboundsgradandoperator}. Hence, let $\lambda_0 \in (1,p_{\min})$ be arbitrary. Set 
$$
\varepsilon:= \frac{d(\lambda_0-1)}{2(d+1)}\qquad \textrm{and}\qquad \lambda:=\frac{\lambda_0 +1}{2}>1
$$ 
and consider the finite covering of $\Omega$ from Lemma~\ref{lem:local_bounds} with such $\varepsilon$. Note that with such choice, we have for arbitrary $\mathcal{B}^i_{r_\varepsilon}$ and arbitrary $x\in \mathcal{B}^i_{r_\varepsilon}$ that
\begin{equation}\label{numero2}
\begin{split}
p(t,x)-\lambda_0 +\frac{p(t,x)}{d}&=q(t)-\lambda +\frac{q(t)}{d} +\lambda-\lambda_0+\frac{(p(t,x)-q(t))(d+1)}{d}\\
&\le q(t)-\lambda +\frac{q(t)}{d} +\lambda-\lambda_0+\frac{\varepsilon(d+1)}{d} =q(t)-\lambda +\frac{q(t)}{d}.
\end{split}
\end{equation}
Thus, with the help of inequality \eqref{numero2} and the estimate \eqref{numero} and using also the fact that the covering $\{\mathcal{B}^i_{r_\varepsilon}\}_{i=1}^N$ is finite, we have
$$
\begin{aligned}
\int_{\Omega_T} (1+|u^n|)^{p(t,x)-\lambda_0 +\frac{p(t,x)}{d}}\diff x \diff t &\le \sum_{i=1}^N \int_0^T \int_{\mathcal{B}^i_{r_\varepsilon}} (1+|u^n|)^{p(t,x)-\lambda_0 +\frac{p(t,x)}{d}}\diff x \diff t\\
&\le \sum_{i=1}^N \int_0^T \int_{\mathcal{B}^i_{r_\varepsilon}} (1+|u^n|)^{q(t)-\lambda +\frac{q(t)}{d}}\diff x \diff t \le N C(\lambda).
\end{aligned}
$$
Consequently, we deduced that for arbitrary $\lambda\in (1,p_{\min})$ there holds
\begin{equation}\label{estptx}
\int_{\Omega_T}|u^n|^{p(t,x)-\lambda +\frac{p(t,x)}{d}}\diff x \diff t\le C(\lambda,\Omega_T).
\end{equation}

Next, we focus on the sequences $\{\nabla_x u^n\}_{n\in\N}$ and $\{A(t,x,\nabla_x u^n)\}_{n\in\N}$. For any $\lambda\in (1,p_{\min})$, we set
\begin{equation}\label{dfexpt}
    \begin{split}
        \zeta (t,x)&:= p(t,x) - \frac{\lambda d}{d+1},\\
        \xi(t,x)&:= \frac{1}{p(t,x)-1}\left(p(t,x) - \frac{\lambda d}{(d+1)}\right).
    \end{split}
\end{equation}
Then, with the use of the Young inequality, we deduce that
\begin{equation}\label{ineq:firstforgradient}
\begin{split}
\int_{\Omega_T}|\nabla_x u^n|^{\zeta}\diff x\diff t &= \int_{\Omega_T}\left(\frac{|\nabla_x u^n|^{p(t,x)}}{(1+ |u^n|)^\lambda}\right)^{\frac{\zeta(t,x)}{p(t,x)}}(1 + |u^n|)^{\frac{\zeta(t,x)\lambda}{p(t,x)}}\diff x\diff t\\
&\leq \int_{\Omega_T}\frac{|\nabla_x u^n|^{p(t,x)}}{(1+ |u^n|)^\lambda}+(1 + |u^n|)^{\frac{\zeta(t,x)\lambda}{p(t,x)-\zeta(t,x)}}\diff x\diff t\\
&= \int_{\Omega_T}\frac{|\nabla_x u^n|^{p(t,x)}}{(1+ |u^n|)^\lambda}+(1 + |u^n|)^{p(t,x)-\lambda +\frac{p(t,x)}{d}}\diff x\diff t\le C(\lambda,\Omega_T),
\end{split}
\end{equation}
where we have used the definition of $\zeta$ in \eqref{dfexpt}, the uniform bounds~\eqref{rem:remarkonptxbounds} and \eqref{estptx}. Next, to simplify forthcoming formula, we define 
$$
A^n(t,x):=A(t,x,\nabla_x u^n)
$$
and in a very similar manner as above, we  deduce that
\begin{equation}\label{ineq:firstfortensor}
\begin{split}
\int_{\Omega_T}|A^n|^\xi\diff x\diff t &= \int_{\Omega_T}\left(\frac{|A^n|^{p'(t,x)}}{(1+ |u^n|)^\lambda}\right)^{\frac{\xi(t,x)}{p'(t,x)}}(1 + |u^n|)^{\frac{\xi(t,x)\lambda}{p'(t,x)}}\diff x\diff t\\
&\leq \int_{\Omega_T}\frac{|A^n|^{p'(t,x)}}{(1+ |u^n|)^\lambda}+ (1 + |u^n|)^{\frac{\xi(t,x)\lambda}{p'(t,x)}\xi(t,x)}\diff x\diff t\\
&=\int_{\Omega_T}\frac{|A^n|^{p'(t,x)}}{(1+ |u^n|)^\lambda}+(1 + |u^n|)^{p(t,x)-\lambda +\frac{p(t,x)}{d}}\diff x\diff t\le C(\lambda,\Omega_T),
\end{split}
\end{equation}
where we have used the definition of $\xi$ in \eqref{dfexpt} and again the uniform bounds~\eqref{rem:remarkonptxbounds} and~\eqref{estptx}. To summarize, the estimates \eqref{estptx}, \eqref{ineq:firstforgradient} and \eqref{ineq:firstfortensor} together with the definition \eqref{dfexpt} lead to~\eqref{app:normboundsgradandoperator}, and the proof is completed.
\end{proof}

\begin{rem}\label{rem:boundednessofnalbaxuandoperator}
The assumption $p_{\min}>\frac{2d}{d+1}$ and the estimate \eqref{app:normboundsgradandoperator} allow us to conclude that the sequences $\{ u^n\}_{n\in\N}$ and $\{A(t, x, \nabla_x u^n)\}_{n\in\N}$ are bounded in $L^a$ for some $a > 1$. In addition, if we assume that $p_{\min}>\frac{2d+1}{d+1}$, i.e., the condition \ref{ass:usual_bounds_exp}, then the same holds true for the sequence $\{\nabla_x u^n\}_{n\in\N}$.
\end{rem}

\subsection{Convergence properties of the approximation}

We start this key subsection by introducing several notations. First, we extend the solution to the negative times and recall the following result.
\begin{lem}\label{lem:extension_by_0_u}\textbf{\textup{(Lemma 2.12, \cite{bulicek2021parabolic})}}
Let $u^n$ be as in Proposition \ref{app:propositionofexistence} and let us extend $u^n$ to $\overline{u^n}$ as follows:
\begin{align*}
    \overline{u^n}(t,x) = \begin{cases}
    0 &\text{ when }t > T,\\
    u^n(t, x) &\text{ when }t\in (0, T],\\
    u_0^n(x) &\text{ when }t \leq 0.
    \end{cases}
\end{align*}
In addition, let $\overline{f^n}$ and $\overline{A^n}$ denote the extension by $0$ outside of the time interval $(0,T)$ for the quantities  $f^n$, $A^n$ respectively. Then
\begin{equation}\label{eq:weak_equality_extensions}
\begin{split}
    \int_{-T}^T&\int_{\Omega}-\overline{u^n}\,\p_t \phi + \overline{A^n}\cdot \nabla_x \phi \diff x \diff t = \int_{-T}^T\int_{\Omega}\overline{f^n} \,\phi\diff x \diff t
\end{split}
\end{equation}
holds for any $\phi\in C^\infty_c((-T, T) \times \Omega)$. Moreover,
\begin{equation}
\label{tdp}
\overline{u^n}^\kappa \in W^{1, 1}((-T, T)\times \Omega')
\end{equation}
for  $\Omega'$ given as
$$
\Omega':=\{x\in \Omega; \; B_{\kappa}(x)\subset \Omega\}.
$$
\end{lem}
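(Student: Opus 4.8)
\textbf{Proof plan for Lemma~\ref{lem:extension_by_0_u}.}

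The statement has two parts: first, the extended weak identity \eqref{eq:weak_equality_extensions}, and second, the Sobolev regularity \eqref{tdp} of the spatial mollification. Since the lemma is quoted verbatim from \cite{bulicek2021parabolic}, one really only needs to reconstruct why each piece holds; I will sketch the natural argument.

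For the weak identity, the plan is to start from \eqref{app:weakformulation}, which holds for test functions $\varphi \in C^\infty_0([0,T)\times\Omega)$, and to re-express the various terms in the extended picture. Given $\phi \in C^\infty_c((-T,T)\times\Omega)$, I would split the integral over $(-T,T)$ into the pieces $(-T,0)$, $(0,T)$ and note $\phi$ vanishes near $t=\pm T$. On $(0,T)$ the definition $\overline{u^n} = u^n$, $\overline{A^n}=A^n$, $\overline{f^n}=f^n$ reduces the relevant part to \eqref{app:weakformulation} applied to $\varphi := \phi|_{[0,T)\times\Omega}$, except that this $\varphi$ need not vanish at $t=0$; that is exactly compensated by the initial-data term $-\int_\Omega u_0^n(x)\varphi(0,x)\diff x$ in \eqref{app:weakformulation}. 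On $(-T,0)$ we have $\overline{u^n}(t,x)=u_0^n(x)$, which is independent of $t$, and $\overline{A^n}=\overline{f^n}=0$, so $\int_{-T}^0\int_\Omega -u_0^n(x)\,\p_t\phi \diff x\diff t = -\int_\Omega u_0^n(x)\big(\phi(0,x)-\phi(-T,x)\big)\diff x = -\int_\Omega u_0^n(x)\phi(0,x)\diff x$ since $\phi(-T,\cdot)=0$. Adding the two contributions, the two boundary terms at $t=0$ cancel and one is left precisely with \eqref{eq:weak_equality_extensions}. This is routine once the bookkeeping of the $t=0$ boundary term is handled carefully; that cancellation is the only subtle point.

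For \eqref{tdp}, the idea is that mollifying \eqref{eq:weak_equality_extensions} in the spatial variable turns the distributional time derivative into an $L^1$ function. Fix $\kappa>0$ and $x\in\Omega'$, so that $B_\kappa(x)\subset\Omega$; then for the purposes of testing one may use $\phi(t,y)=\psi(t)\eta_\kappa(x-y)$ with $\psi\in C^\infty_c((-T,T))$, and \eqref{eq:weak_equality_extensions} yields, in the sense of distributions on $(-T,T)$,
\begin{align*}
\p_t \overline{u^n}^\kappa(t,x) = \overline{f^n}^\kappa(t,x) - \DIV_x\big(\overline{A^n}\big)^\kappa(t,x) = \overline{f^n}^\kappa(t,x) + \int_\Omega \nabla_x\eta_\kappa(x-y)\cdot \overline{A^n}(t,y)\diff y.
\end{align*}
The right-hand side belongs to $L^1((-T,T)\times\Omega')$: $\overline{f^n}^\kappa$ inherits the $L^1$ bound of $f^n$, and since $\overline{A^n}\in L^1((-T,T)\times\Omega;\R^d)$ (from $A^n\in L^{p'(t,x)}\subset L^1$) and $\nabla_x\eta_\kappa$ is bounded, the convolution term is likewise in $L^1$. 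Together with $\overline{u^n}^\kappa\in L^1$ and $\nabla_x\overline{u^n}^\kappa\in L^1$ (again because $\nabla_x\eta_\kappa$ is bounded and $\overline{u^n}\in L^1$), this gives $\overline{u^n}^\kappa\in W^{1,1}((-T,T)\times\Omega')$. The main thing to be careful about is that $x$ ranges only over $\Omega'$ so that the mollification does not see the boundary of $\Omega$, and that all the estimates are uniform enough in $x$ to conclude joint integrability via Fubini; this is the only genuine obstacle, and it is mild.
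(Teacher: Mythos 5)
Your reconstruction is correct and is exactly the intended argument; the paper itself gives no proof of this lemma (it is imported verbatim from \cite{bulicek2021parabolic}), and both steps you describe — splitting the time integral at $t=0$ so that the initial-data term in \eqref{app:weakformulation} cancels against the boundary term produced by integrating $-u_0^n\,\p_t\phi$ over $(-T,0)$, and then reading off $\p_t \overline{u^n}^\kappa$ as an $L^1$ function by testing with $\psi(t)\eta_\kappa(x-y)$ — are the standard route. The only blemish is a harmless internal sign slip in your displayed identity: from $\p_t u^n-\DIV_x A^n=f^n$ one gets $\p_t\overline{u^n}^\kappa=\overline{f^n}^\kappa+\DIV_x(\overline{A^n})^\kappa$, which matches your final convolution expression $\overline{f^n}^\kappa+\int_\Omega\nabla_x\eta_\kappa(x-y)\cdot\overline{A^n}(t,y)\diff y$ but not the ``$-\DIV_x$'' written in the middle; this does not affect the $L^1$ conclusion.
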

Let us also define the following quantities for later use:
\begin{equation}\label{aux_q}
    \begin{split}
        \omega^{n, m} &:= T_{k}(u^n) - T_{k}(u^m),\\
        \omega^{n, m}_{\varepsilon} &:= T_{k, \varepsilon}(u^n) - T_{k, \varepsilon}(u^m),\\
        \overline{\omega^{n, m}_\varepsilon} &:= T_{k, \varepsilon}(\overline{u^n}) - T_{k, \varepsilon}(\overline{u^m}),\\
        \overline{\omega^{n, m}_{\varepsilon,\kappa}} &:= T_{k, \varepsilon}(\overline{u^n}^\kappa) - T_{k, \varepsilon}(\overline{u^m}^\kappa).
    \end{split}
\end{equation}
Here, recall the definitions \eqref{def:trunc}, \eqref{def:trunc_smooth}, and that the superscript $\kappa$ is connected to the mollification in space (see the beginning of the Section \ref{S3}). Furthermore, we introduce a classical approximation of  the characteristic function of the time interval $(\eta, \beta)\subset (-T, T)$
\begin{align}\label{eq:def_fun_gamma}
\gamma_{\eta, \beta}^{\tau}(t) := \left\{
\begin{aligned}
     &0 &&\text{ for } t\leq \eta - \tau \text{ or } t\geq \beta + \tau, \\
     &1 &&\text{ for } \eta \leq t \leq \beta,\\
     &\text{affine} &&\text{ for }t\in [\eta - \tau, \eta]\cup[\beta, \varrho + \tau].
\end{aligned}
\right.
\end{align}
With this in mind, we start with the collection of the several convergence results. The first lemma states the almost everywhere convergence of the approximating sequence.
\begin{lem}\label{app:almosteverywhereconvergence}
Let $u^n$ be as in Proposition~\ref{app:propositionofexistence}. Then, there exists $u\in L^\infty((0, T); L^1(\Omega))$ such that, up to the subsequence which we do not relabel,
\begin{align}\label{pointA}
    u^n \longrightarrow u\quad\text{a. e. in }\Omega_T.
\end{align}
In particular, for any $k\in \mathbb{N}$ the weak limit $v_k$ defined in~\eqref{weaklimittrunaction} is equal to $\nabla_x T_k(u)$.
\end{lem}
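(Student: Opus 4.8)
The plan is to obtain the almost everywhere convergence of $\{u^n\}$ from compactness in time of suitable truncations, using the Aubin--Lions type machinery that is already available in the mollified form via Lemma~\ref{lem:extension_by_0_u}. Concretely, I would work with the time--space mollifications $\overline{u^n}^\kappa$ and the quantities $\overline{\omega^{n,m}_{\varepsilon,\kappa}}$ from~\eqref{aux_q}. The a~priori bounds from Lemma~\ref{lem:boundsontruncations} give, for each fixed $k$, that $\nabla_x T_k(u^n)$ is bounded in $L^{p(t,x)}(\Omega_T;\R^d)$ (hence in $L^1$), while $T_k(u^n)$ is bounded in $L^\infty$; the weak formulation~\eqref{eq:weak_equality_extensions} together with the $L^1$-bound on $\overline{A^n}$ furnished by~\eqref{rem:remarkonptxbounds} (or~\eqref{app:energytruncl1}) controls $\partial_t \overline{u^n}$ in a space of the form $L^1((-T,T); W^{-1,1}(\Omega)) + \M$. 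This is the standard setup for an Aubin--Lions / Simon compactness argument adapted to merely integrable right-hand sides.

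The key steps, in order, would be: (i) fix $k\in\mathbb{N}$ and show that $\{T_k(u^n)\}_n$ is relatively compact in $L^1(\Omega_T)$ — here the spatial bound comes from $\nabla_x T_k(u^n)\in L^{p(t,x)}\hookrightarrow L^1$ uniformly, and the time-equicontinuity comes from testing the equation with truncated and mollified test functions, exactly as the quantities $\overline{\omega^{n,m}_{\varepsilon,\kappa}}$ and the cut-off $\gamma^\tau_{\eta,\beta}$ are designed to be used; (ii) pass $\varepsilon\to0$ and $\kappa\to0$ to remove the regularizations, using $0\le T'_{k,\varepsilon}\le1$ and $T_{k,\varepsilon}\to T_k$ uniformly; (iii) extract by a diagonal procedure over $k$ a single subsequence along which $T_k(u^n)\to w_k$ in $L^1$ and a.e. for every $k$; (iv) identify the limits: since $T_k$ is a bijection onto $[-k,k]$ with continuous inverse, the a.e. convergence of $T_k(u^n)$ for all $k$, combined with the uniform bound $\sup_t\|u^n(t)\|_1\le C$ which prevents escape to infinity on a set of positive measure (a Chebyshev argument shows $|\{|u^n|>k\}|\le C/k$ uniformly), yields a measurable $u$ with $u^n\to u$ a.e. in $\Omega_T$; Fatou then gives $u\in L^\infty((0,T);L^1(\Omega))$, and $w_k=T_k(u)$. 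Finally, (v) the a.e. convergence $\nabla_x T_k(u^n)\to \nabla_x T_k(u)$ in the sense of distributions, matched against the weak limit~\eqref{weaklimittrunaction}, forces $v_k=\nabla_x T_k(u)$.

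The main obstacle is step (i): establishing the time-compactness of $T_k(u^n)$ when the data are only in $\M$ (resp.\ $L^1$) and the operator has only an $L^1$ bound, so that $\partial_t u^n$ lives in a bad negative-order space and no reflexivity is available. The standard Aubin--Lions lemma does not apply directly; the workaround, which the introduced mollifications make explicit, is to prove equicontinuity in time of the \emph{mollified-in-space} truncations $T_{k,\varepsilon}(\overline{u^n}^\kappa)$ — for which~\eqref{tdp} guarantees a genuine weak time derivative — estimate $\int |T_{k,\varepsilon}(\overline{u^n}^\kappa)(t+\delta)-T_{k,\varepsilon}(\overline{u^n}^\kappa)(t)|$ uniformly in $n$ for $\kappa$ fixed, combine with the (uniform in $n$) spatial modulus of continuity coming from the gradient bound, and only then let $\kappa\to0$. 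Care is needed because the constants in the time estimate blow up as $\kappa\to0$, so the two limits must be taken in the right order, with the spatial compactness absorbing the $\kappa$-dependence; this is exactly the delicate bookkeeping that the auxiliary quantities in~\eqref{aux_q} and the cut-off~\eqref{eq:def_fun_gamma} are set up to handle. I expect the remaining steps (ii)--(v) to be routine.
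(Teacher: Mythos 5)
Your proposal follows the same overall strategy as the paper --- gain compactness for the smooth truncations $T_{k,\varepsilon}(u^n)$ from the renormalized form of the equation, remove $\varepsilon$, diagonalize over $k$, and reconstruct $u$ --- and all of your steps (ii)--(v) are sound; in particular your reconstruction of $u$ via the ``inverse truncation plus Chebyshev'' argument is a perfectly valid alternative to the paper's route, which instead builds $u$ as a monotone limit of the strong limits $u_k$ of $T_k(u^n)$ and then proves \eqref{pointA} by showing $\int_{\Omega_T}\sqrt{|u^n-u|}\to 0$. The one place where you diverge, and where you make the argument harder than it needs to be, is step (i): you propose to prove time-equicontinuity of the spatially mollified truncations $T_{k,\varepsilon}(\overline{u^n}^\kappa)$ for fixed $\kappa$ and then undo the mollification, worrying (correctly, for that route) about constants blowing up as $\kappa\to 0$. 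The paper avoids this entirely: it writes the identity \eqref{eq:renormalizedequationforun} for $\p_t T_{k,\varepsilon}(u^n)$, observes that the divergence term is bounded in $L^{p'_{\max}}(\Omega_T)$ by \eqref{app:energytruncl1}, that $f^nT'_{k,\varepsilon}(u^n)$ is bounded in $L^1$, and --- the key point your sketch leaves implicit --- that the commutator $A(t,x,\nabla_x u^n)\cdot\nabla_x u^n\,T''_{k,\varepsilon}(u^n)$ is bounded in $L^1(\Omega_T)$ thanks to \eqref{app:fullenergyl1} and the fact that $T''_{k,\varepsilon}$ is supported where $|u^n|\le k+1$; this places $\p_t T_{k,\varepsilon}(u^n)$ in $L^1((0,T);(W^{2,d+1}_0(\Omega))^*)$, so the generalized Aubin--Lions lemma~\ref{aubin-lions} (which allows $q=1$ and a merely metrizable locally convex target $X_3$) applies directly, with no $\kappa$-mollification and no delicate order of limits. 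Relatedly, the auxiliary quantities $\overline{\omega^{n,m}_{\varepsilon,\kappa}}$ and the cut-off $\gamma^\tau_{\eta,\beta}$ of \eqref{aux_q} and \eqref{eq:def_fun_gamma} are not the tools for this lemma: they are designed for the monotonicity argument of Lemma~\ref{lem:limitoftheproductinpowermu}, where one must test the equation with nonlinear functions of $u^n$ and $u^m$ simultaneously. Your route can be made to work, but it amounts to re-proving a special case of Lemma~\ref{aubin-lions} by hand.
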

\begin{proof}
In case that $p(t,x)$ is very close to one, we cannot apply the Aubin--Lions lemma directly to the sequence $\{u^n\}_{n\in\N}$ due to its low integrability and regularity. Therefore, we first focus on the sequence $\{T_{k,\varepsilon}(u^n) \}_{n\in \mathbb{N}}$, where $k\in \N$ and $\varepsilon>0$ are arbitrary, but fixed, and apply the Aubin--Lions lemma~\ref{aubin-lions} to such sequence.
Indeed, thanks to \eqref{app:normboundsgradandoperator}, we see that the aforementioned sequence is bounded in $L^{p_{\min}}((0,T); W^{1,p_{\min}}_0(\Omega))$. Hence, we need a piece of information about its time derivative, which can be however deduced from the following identity
\begin{align}\label{eq:renormalizedequationforun}
\p_t T_{k,\varepsilon}(u^n) = \DIV_x \left(A(t, x, \nabla_x u^n)T'_{k,\varepsilon}(u^n)\right) + f^n\, T'_{k,\varepsilon}(u^n) - A(t, x,\nabla_x u^n)\cdot\nabla_x(T'_{k,\varepsilon}(u^n)).
\end{align}
The equation above is formally obtained by multiplying the equation~\eqref{eq:approxequation} by~$T'_{k,\varepsilon}(u^n)$. We skip the details here and refer the reader to the procedure explained in the proof of Lemma~\ref{lem:limitoftheproductinpowermu}, because the rigorous proof follows a similar line to the proofs below. 
Now, we may see that the right-hand side of \eqref{eq:renormalizedequationforun} is bounded in $L^1((0, T); (W_0^{2,d+1}(\Omega))^*)$. Indeed, it follows  from~\eqref{app:energytruncl1} that $\{A(t, x, \nabla_x u^n)T'_{k,\varepsilon}(u^n))\}_{n\in\N}$ is bounded in $L^{p_{\mathrm{max}}'}(\Omega_T)$, and from the Sobolev embeddings the spatial derivatives of the functions from $L^\infty((0, T); W_0^{2, d+1}(\Omega))$ are in $L^\infty((0, T); L^{\infty}(\Omega))$. Furthermore, $\{f^n\, T'_{k,\varepsilon}(u^n)\}_{n\in \N}$ and $\{A(t, x,\nabla_x u^n):\nabla_x(T'_{k,\varepsilon}(u^n))\}_{n\in\N}$ are both bounded in $L^1(\Omega_T)$. The former, from the definition of the sequence $\{f^n\}_{n\in\N}$, and the latter, from \eqref{app:fullenergyl1} and the fact that $\{T_{k,\varepsilon}''(u^n)(1 + |u^n|)^\lambda\}_{n\in\N}$ is bounded in $L^\infty(\Omega_T)$ for a fixed $\lambda > 1$. Hence, $\{\p_t T_{k,\varepsilon}(u^n)\}_{n\in \N}$ is bounded in $L^1((0, T); (W^{2,d+1}_0(\Omega))^*)$ and we may use the Aubin--Lions lemma~\ref{aubin-lions} to deduce (for a subsequence which we do not relabel)
\begin{align}\label{app:strongconvergenceinl1}
T_{k,\varepsilon}(u^n) \longrightarrow u_{k,\varepsilon} \text{ strongly in }L^1(\Omega_T)
\end{align}
for a fixed $k\in \N$, $\varepsilon>0$. Note here, that $u_{k,\varepsilon}$ are some functions (strong limits), but we do not know yet whether there is some underlying $u$ for which $T_{k,\varepsilon}(u)=u_{k,\varepsilon}$. On the other hand, we can easily let $\varepsilon\to 0_+$ and since $T_{k,\varepsilon} \to T_k$ uniformly on $\R$ (note also that $T_k$ is a bounded function) we have
\begin{align}\label{app:strongconvergenceinl1MB}
T_{k}(u^n) \longrightarrow u_{k} \text{ strongly in }L^1(\Omega_T)
\end{align}
for some functions $u_k \in L^{\infty}(\Omega_T)$. In addition, using the uniform bound~\eqref{rem:remarkonptxbounds} and the convergence result~\eqref{app:strongconvergenceinl1MB}, we observe that for any $t_1,t_2 \in [0,T]$ we have
\begin{equation}\label{donot}
\begin{split}
\int_{t_1}^{t_2}\|u_k(t)\|_{L^1(\Omega)}\diff t &\le \lim_{n\to \infty} \int_{t_1}^{t_2}\|T_{k}(u^n(t))\|_{L^1(\Omega)}\diff t  \le \lim_{n\to \infty} \int_{t_1}^{t_2}\|u^n(t)\|_{L^1(\Omega)}\diff t\\
&\le C(t_2-t_1).
\end{split}
\end{equation}
Moreover, using the definition of $T_k$, we have that for all $m>k$
$$
(T_k(u^n))_+ \le (T_m(u^n))_+, \qquad (T_k(u^n))_- \ge  (T_m(u^n))_-
$$
and consequently, we have the same for strong limits
$$
(u_k)_+ \le (u_m)_+, \qquad (u_k)_- \ge (u_m)_-
$$
whenever $k<m$. Hence, $\{(u_k)_+\}_{k\in \N}$ and $\{(u_k)_{-}\}_{k\in \N}$ are monotone sequences and using the uniform estimate \eqref{donot} and also the monotone convergence theorem, we may deduce that there exist $u\in L^1(\Omega_T)$ such that 
\begin{equation}\label{silenec}
u_k  \longrightarrow u \text{ strongly in }L^1(\Omega_T).
\end{equation}
Further, it follows from \eqref{donot} that 
\begin{equation}\label{donot2}
\begin{split}
\int_{t_1}^{t_2}\|u(t)\|_{L^1(\Omega)}\diff t \le C(t_2-t_1)
\end{split}
\end{equation}
and consequently, letting $t_2 \to t_1$, we see that for almost all $t\in (0,T)$, we have
$$
\|u(t)\|_{L^1(\Omega)}\le C,
$$
where $C$ is independent of $t$. Hence, we have proved that $u\in L^{\infty}((0,T); L^1(\Omega))$.

Finally, we show that 
$$
\lim_{n\to \infty} \int_{\Omega_T} \sqrt{|u^n-u|} \diff x \diff t=0
$$
and it implies that for a subsequence (that we again do not relabel) the convergence result~\eqref{pointA} holds true. Using the triangle inequality, the H\"{o}lder inequality, the uniform bound~\eqref{rem:remarkonptxbounds} and the convergence result~\eqref{app:strongconvergenceinl1MB}, we have
$$
\begin{aligned}
\lim_{n\to \infty} &\int_{\Omega_T} \sqrt{|u^n-u|} \diff x \diff t \le \lim_{n\to \infty} C\int_{\Omega_T} \sqrt{|u^n-T_k(u^n)|}+\sqrt{|T_k(u^n)-u_k|}+\sqrt{|u_k-u|} \diff x \diff t\\
&\le \lim_{n\to \infty} C\left( \|u^n\|_{L^1(\Omega_T)}^{\frac12} |\{|u^n|>k|\}^{\frac12} +\|u_k-u\|_{L^1(\Omega_T)}^{\frac12}\right) \le C\left( \frac{1}{k}+\|u_k-u\|_{L^1(\Omega_T)}\right)^{\frac12}.
\end{aligned}
$$
Thus, we let $k\to \infty$ and with the help of~\eqref{silenec}, we finish the proof.
%
%
%
\end{proof}

After the preliminary convergence result, we now focus on the key result, which is, in fact, the~heart of the~proof.
\begin{lem}\label{lem:limitoftheproductinpowermu}
Let $\{u^n\}_{n\in \mathbb{N}}$ be as in Proposition~\ref{app:propositionofexistence}. Denote
$$
A^n_k:=A(t,x,\nabla_x T_k(u^n)).
$$
Then
\begin{align}\label{app:limitoftheproductinl1}
   \limsup_{n\to \infty} \limsup_{ m\to\infty}\int_{\Omega_T}\left|(A^n_k-A^m_k)\cdot(\nabla_x T_k(u^n) - \nabla_x T_k(u^m))\psi(x)\right|^\mu\diff x\diff t = 0
\end{align}
for any $\psi\in C_c^\infty(\Omega)$, any $0 < \mu < 1$ and arbitrary $k\in \mathbb{N}$.
\end{lem}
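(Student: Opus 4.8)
The goal is to show that the "monotonicity defect" for the truncated gradients, localized by $\psi$, converges to zero in $L^\mu$ for every $\mu\in(0,1)$. The natural strategy is to test the equation with a cleverage of the difference of smoothed truncations $T_{k,\varepsilon}(u^n)-T_{k,\varepsilon}(u^m)$ (multiplied by $\psi$ and by a time cutoff $\gamma^\tau_{\eta,\beta}$), pass through the mollifications so that the time derivative term is meaningful, and exploit the almost-everywhere convergence $u^n\to u$ from Lemma~\ref{app:almosteverywhereconvergence} together with the uniform bounds of Lemma~\ref{lem:boundsontruncations} and the weak limits \eqref{weaklimitoperatortrunc}--\eqref{weaklimittrunaction}. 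Concretely, I would first note that by Hölder's inequality with exponent $1/\mu>1$ and the fact that $\Omega_T$ has finite measure, it suffices to bound
\begin{equation*}
\int_{\Omega_T}\bigl(A^n_k-A^m_k\bigr)\cdot\bigl(\nabla_x T_k(u^n)-\nabla_x T_k(u^m)\bigr)\,\psi^2(x)\diff x\diff t
\end{equation*}
uniformly and show a suitable smallness, since the integrand is nonnegative by monotonicity~\ref{intro:assumA_mono} — so the absolute value can be dropped and no cancellation is lost; the $L^\mu$ statement then follows from the $L^1$ control of a nonnegative quantity plus equi-integrability, or simply by interpolating the $L^1$ bound against the trivial $L^\infty$-in-measure statement. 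Actually the cleanest route: drop $|\cdot|^\mu$ in favor of $|\cdot|$ via Jensen since $\mu<1$ and the domain is finite, reducing everything to proving the $L^1$-convergence
\begin{equation*}
\limsup_{n\to\infty}\limsup_{m\to\infty}\int_{\Omega_T}\bigl(A^n_k-A^m_k\bigr)\cdot\bigl(\nabla_x T_k(u^n)-\nabla_x T_k(u^m)\bigr)\,\psi(x)\diff x\diff t=0,
\end{equation*}
with $\psi\ge0$.

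The core computation is to expand this integral as a sum of four terms $\int A^n_k\cdot\nabla_x T_k(u^n)\psi$, $-\int A^n_k\cdot\nabla_x T_k(u^m)\psi$, $-\int A^m_k\cdot\nabla_x T_k(u^n)\psi$, $\int A^m_k\cdot\nabla_x T_k(u^m)\psi$, and to handle the "diagonal" terms by testing \eqref{app:weakformulation} (or rather its extended version \eqref{eq:weak_equality_extensions}) with an admissible test function built from $\psi\,T_{k,\varepsilon}(\overline{u^n})$, after mollifying in space to gain the regularity needed in \eqref{tdp} and using $\gamma^\tau_{\eta,\beta}$ to localize in time near a Lebesgue point. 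The time-derivative term then produces, via integration by parts, a quantity involving $\partial_t G_{k}$-type primitives of the truncations which is controlled using $u^n\in L^\infty((0,T);L^1)$; the key point — exactly as in the method of $L^1$-truncations of Boccardo--Gallouët-type arguments — is that the "bad" term $\int A^n\cdot\nabla_x(\text{tail of }u^n)$ over $\{|u^n|>k\}$ is controlled by \eqref{app:fullenergyl1} and becomes negligible. I would send first $m\to\infty$, then $n\to\infty$, then the mollification parameters $\kappa\to0$ and $\tau\to0$, then $\varepsilon\to0$, then finally localize the time interval $(\eta,\beta)\nearrow(0,T)$ or use density of Lebesgue points, keeping careful track of the order so that each passage uses only bounds already established. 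The cross terms are handled by weak-strong convergence: $\nabla_x T_k(u^m)\rightharpoonup\nabla_x T_k(u)$ weakly in $L^{p(t,x)}$ while $A^n_k\psi$ is fixed, so $\int A^n_k\cdot\nabla_x T_k(u^m)\psi\to\int A^n_k\cdot v_k\,\psi$ as $m\to\infty$, and symmetrically; combining these limits with the two diagonal contributions makes the whole expression collapse.

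The main obstacle — and the reason the double $\limsup$ and the smoothed truncation $T_{k,\varepsilon}$ are needed — is that neither $\partial_t u^n$ nor $\partial_t T_k(u^n)$ is a function with enough integrability to be paired directly with the test functions appearing in the argument; one only knows $\partial_t T_{k,\varepsilon}(u^n)\in L^1((0,T);(W^{2,d+1}_0(\Omega))^*)$ from the proof of Lemma~\ref{app:almosteverywhereconvergence}. Thus the genuinely delicate part is the rigorous justification of the integration by parts in the time variable: one must regularize $\overline{u^n}$ in space (using \eqref{tdp}), write the identity \eqref{eq:weak_equality_extensions} for the mollified objects, multiply by $\psi\,\gamma^\tau_{\eta,\beta}\,T_{k,\varepsilon}(\overline{u^n}^\kappa)$ or the difference thereof, integrate by parts in $t$ to land on $\int\partial_t\bigl(\text{primitive of }T_{k,\varepsilon}\bigr)(\overline{u^n}^\kappa)\gamma^\tau\psi$, and then commute all the limits. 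Controlling the commutator errors from the space mollification acting on $A^n$ (which is only bounded in $L^{p'_{\max}}$ locally, via \eqref{app:energytruncl1}) and ensuring the tail term $A^n\cdot\nabla_x(T_{k,\varepsilon}(u^n)-T_k(u^n))$ vanishes as $\varepsilon\to0$ using $|T''_{k,\varepsilon}|\le C\varepsilon^{-1}$ together with \eqref{app:fullenergyl1} is where essentially all the real work lies; everything else is the bookkeeping of weak-strong pairings.
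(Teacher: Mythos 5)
There is a genuine gap, and it sits at the heart of your plan. Your reduction ``drop $|\cdot|^\mu$ in favor of $|\cdot|$ via Jensen'' replaces the lemma by the strictly stronger $L^1$-statement
\begin{equation*}
\limsup_{n\to\infty}\limsup_{m\to\infty}\int_{\Omega_T}\bigl(A^n_k-A^m_k\bigr)\cdot\bigl(\nabla_x T_k(u^n)-\nabla_x T_k(u^m)\bigr)\psi\diff x\diff t=0,
\end{equation*}
and your route to it --- expand into four terms, identify the two diagonal terms by testing the equation, and pass to the limit in the cross terms by weak--strong pairing --- is circular. The cross terms converge (iterating $m\to\infty$, then $n\to\infty$) to $\int B_k\cdot v_k\,\psi$, so the scheme closes only if you can show $\lim_n\int A^n_k\cdot\nabla_x T_k(u^n)\psi=\int B_k\cdot v_k\,\psi$, i.e.\ that the weak limit of the product equals the product of the weak limits. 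That identification is exactly what this lemma is designed to produce (via Lemma~\ref{lem:monotonicitytrickneededinequality} and the Minty argument in Lemma~\ref{lem:identificationofthelimit}); it cannot be an input. Testing the equation with $\psi T_{k,\varepsilon}(\overline{u^n})$ expresses the diagonal term through $\int_\Omega G_{k,\varepsilon}(u^n(t))\psi$ and $\int f^n T_{k,\varepsilon}(u^n)\psi$, but the former admits only a Fatou-type one-sided bound and, to close the argument, you would need the corresponding energy identity for the limit $u$ --- unavailable, since $u$ is not yet known to solve anything and is not an admissible test function for $L^1$/measure data. Moreover, the $L^1$-version itself is out of reach of the available bounds: on the set where $|T_k(u^n)-T_k(u^m)|$ is not small you control only the measure of the set, not the integral of the (merely $L^1$-bounded, not equi-integrable) product, which is precisely why the lemma is stated with $\mu<1$.

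The missing idea is the $L^\infty$-truncation of the \emph{difference}: one tests \eqref{eq:weak_equality_extensions} for $u^n$ and for $u^m$ with (mollified versions of) $T'_{k,\varepsilon}(\cdot)\,T_{\delta}\bigl(T_{k,\varepsilon}(u^n)-T_{k,\varepsilon}(u^m)\bigr)\psi$ and subtracts. The parabolic term then pairs $\p_t\bigl(T_{k,\varepsilon}(u^n)-T_{k,\varepsilon}(u^m)\bigr)$ with $T_\delta$ of the same quantity and integrates to a primitive of $T_\delta$, hence is $O(\delta)$ with no limit identification needed; the data term is $O(\delta)$ since $|T_\delta|\le\delta$; the commutators with $T''_{k,\varepsilon}$ are bounded via an auxiliary test with $T'_{k,\varepsilon}(u^n_{\pm})\psi$. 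This yields $\int_{\Omega_T}(A^n_k-A^m_k)\cdot\nabla_x T_\delta(T_k(u^n)-T_k(u^m))\psi\le C(k,\psi)\delta$, and the lemma follows by splitting $\Omega_T$ according to whether $|T_k(u^n)-T_k(u^m)|<\delta$, using convergence in measure of $T_k(u^n)$ on the complement, and exploiting $\mu<1$ via H\"older to convert small measure into a small integral before sending $\delta\to0$. Your proposal correctly anticipates the technical scaffolding (space/time mollification, \eqref{tdp}, the $T''_{k,\varepsilon}$ control via \eqref{app:fullenergyl1}, the ordering of limits) but not this decisive cancellation mechanism, without which the argument does not close.
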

\begin{proof}
The proof heavily relies on the so-called $L^{\infty}$-truncation method, see~\cite{MR1453181}--\cite{MR1183665}, where it was applied to the parabolic problems with the constant exponent $p$. The second key ingredient is a proper approximation technique introduced in the context of a parabolic equation with variable, and possibly non-smooth, exponent $p(t,x)$ in~\cite{bulicek2021parabolic}.

We start with introducing the proper test functions in~\eqref{eq:weak_equality_extensions}. We set
\begin{align*}
\phi^n(t, x) &:= \left(T'_{k, \varepsilon}(\overline{u^n}^\kappa)\mathcal{R}^\alpha\left(T_{\delta}(\overline{\omega^{n,m}_{\varepsilon,\kappa}}^\kappa)\gamma_{-\eta,\beta}^\tau(t)\psi(x)\right)\right)^\kappa,\\
\phi^m(t, x) &:= \left(T'_{k, \varepsilon}(\overline{u^m}^\kappa)\mathcal{R}^\alpha\left(T_{\delta}(\overline{\omega^{n,m}_{\varepsilon,\kappa}}^\kappa)\gamma_{-\eta,\beta}^\tau(t)\psi(x)\right)\right)^\kappa,
\end{align*}
where $\psi\in C^{\infty}_c(\Omega)$, the parameters $\tau$ and $\alpha$ fulfill $-T<-\tau- \alpha -\eta<\beta+\tau+\alpha$ and the parameter $\kappa>0$ is so small that the support of $\psi\subset \Omega_{\kappa}$, where
$$
\Omega_{\kappa}:=\{x\in \Omega; \; B_{\kappa}(x)\subset \Omega\}.
$$
We also consider $k\in \mathbb{N}$, $\varepsilon\in (0,1)$ and $\delta\in (0,1)$ arbitrary. Here, recall the definitions \eqref{aux_q}, \eqref{def:trunc}, \eqref{def:trunc_smooth}, \eqref{eq:def_fun_gamma} and that the superscript $\kappa$ and the function $\mathcal{R}^\alpha$ are connected with the mollifications in space and time (see the beginning of the Section \ref{S3}). Setting now $\phi:=\phi^n$ in the equation \eqref{eq:weak_equality_extensions} for $n$ and setting $\phi:=\phi^m$ in the equation for \eqref{eq:weak_equality_extensions} for $m$ and subtracting both identities, we obtain
%
\begin{equation}\label{MB1}
\begin{split}
    \underset{I_1}{\underbrace{\int_{-T}^T\int_{\Omega} \overline{A^n}\cdot \nabla_x \phi^n-\overline{A^m}\cdot \nabla_x \phi^m  \diff x \diff t}} &= \underset{I_2}{\underbrace{\int_{-T}^T\int_{\Omega}\overline{f^n} \,\phi^n-\overline{f^m} \,\phi^m\diff x \diff t}}\\
    &\quad -\underset{I_3}{\underbrace{\int_{-T}^T\int_{\Omega}-\overline{u^n}\,\p_t \phi^n+\overline{u^m}\,\p_t \phi^m \diff x \diff t}}.
\end{split}
\end{equation}
Next, we estimate terms on the right-hand side. We start with the last one, where we use integration by parts to get
$$
 -I_3= \int_{-T}^T\int_{\Omega}\p_t[T_{k,\varepsilon}(\overline{u^n}^\kappa) - T_{k,\varepsilon}(\overline{u^m}^\kappa)]\mathcal{R}^\alpha\left(T_{\delta}(\overline{\omega^{n,m}_{\varepsilon,\kappa}}^\kappa)\gamma_{\eta,\beta}^\tau(t)\psi(x)\right)\diff x\diff t.
$$
Making use of the Lebesgue dominated convergence theorem and the information \eqref{tdp},  we may let $\alpha,\,\tau\to 0^+$ and get
\begin{align*}
    \lim_{\alpha,\tau \to 0_+}-I_3=\int_{\Omega}\int_{-\eta}^\beta\p_t[T_{\delta}(\overline{\omega^{n,m}_{\varepsilon,\kappa}}^\kappa)]\psi(x)\diff x\diff t.
\end{align*}
Consequently, thanks to the definition of $T_{\delta}$, we see that
\begin{align}\label{boundsfirstterm}
    \lim_{\alpha,\tau \to 0_+}|I_3|\le \delta \|\psi\|_{L^{\infty}(\Omega)}.
\end{align}
Notice that the estimate is independent of $n,m,\kappa$.

The term $I_2$ can be estimated in an even simpler way as follows. First, we have
\begin{align*}
    I_2 = \int_{0}^T\int_{\Omega}f^n\,\phi^n - f^m\,\phi^m\diff x\diff t
\end{align*}
and by the H\"{o}lder inequality and the definition of $\phi^n$
\begin{align*}
    |I_2|\le 2\sup_{n\in \mathbb{N}} \|f^n\|_{L^1(\Omega_T)}\sup_{n\in \mathbb{N}} \|\phi^n\|_{L^{\infty}(\Omega_T)}\leq   2\sup_{n\in \mathbb{N}} \|f^n\|_{L^1(\Omega_T)}\sup_{n\in \mathbb{N}}\|T_{\delta}(\overline{\omega^{n,m}_{\varepsilon,\kappa}}^\kappa)\gamma_{\eta,\beta}^\tau(t)\psi(x)\|_{L^{\infty}(\Omega_T)}.
\end{align*}
Therefore, we obtain with the help of the uniform bound \eqref{L1E} and the trivial estimate for $T_{\delta}$, that
\begin{align}\label{boundssecondterm}
    |I_2|\le C\delta \|\psi\|_{L^{\infty}(\Omega)}.
\end{align}

Finally, we focus on the most important term $I_1$.
Similarly as before we may use the Lebesgue dominated convergence theorem and let $\alpha,\,\tau\to 0^+$, so defining $Y^{n,m,k}_{\beta,\kappa,\varepsilon,\delta}:=\lim_{\tau,\alpha \to 0_+}I_1$,  we have
\begin{align*}
Y^{n,m,k}_{\beta,\kappa,\varepsilon,\delta}&=\int_{0}^\beta\int_{\Omega}A^n\cdot\nabla_x\left[T'_{k,\varepsilon}((u^n)^\kappa)(T_{\delta}((\omega^{n, m}_{\varepsilon,\kappa})^\kappa)\psi)^\kappa\right]^\kappa \diff x\diff t\\
&\quad -\int_{0}^\beta \int_{\Omega} A^m\cdot\nabla_x\left[T'_{k,\varepsilon}((u^m)^\kappa)(T_{\delta}((\omega^{n, m}_{\varepsilon,\kappa})^\kappa)\psi)^\kappa\right]^\kappa\diff x\diff t
\end{align*}
and it follows from \eqref{boundsfirstterm} and \eqref{boundssecondterm} that
\begin{equation}
    \label{boundY}
  Y^{n,m,k}_{\beta,\kappa,\varepsilon,\delta}\le C\delta \|\psi\|_{L^{\infty}(\Omega)},
\end{equation}
where $C$ is independent of $n,m,\beta,\kappa,\varepsilon, \delta$. Our goal is to pass with $\kappa$ and $\varepsilon$ to $0$ and obtain the proper estimate on $Y^{n,m,k}_{\beta,\kappa,\varepsilon,\delta}$ from below. First, using the standard properties of mollification and the relation between $\kappa$ and the support of $\psi$, we get
\begin{align*}
Y^{n,m,k}_{\beta,\kappa,\varepsilon,\delta}&=\int_{0}^\beta\int_{\Omega}(A^n)^{\kappa}\cdot\nabla_x\left[T'_{k,\varepsilon}((u^n)^\kappa)(T_{\delta}((\omega^{n, m}_{\varepsilon, \kappa})^\kappa)\psi)^\kappa\right] \diff x\diff t\\
&\quad -\int_{0}^\beta \int_{\Omega} (A^m)^{\kappa}\cdot\nabla_x\left[T'_{k,\varepsilon}((u^m)^\kappa)(T_{\delta}((\omega^{n, m}_{\varepsilon, \kappa})^\kappa)\psi)^\kappa\right]\diff x\diff t\\
&=\int_{0}^\beta\int_{\Omega} \left(A^n\right)^\kappa\cdot\nabla_x(u^n)^\kappa\,T''_{k,\varepsilon}((u^n)^\kappa)(T_{\delta}((\omega^{n, m}_{\varepsilon, \kappa})^\kappa)\psi)^\kappa\diff x\diff t \\
&\quad -\int_0^{\beta}\int_{\Omega}
\left(A^m\right)^\kappa\cdot\nabla_x(u^m)^\kappa\,T''_{k,\varepsilon}((u^m)^\kappa)(T_{\delta}((\omega^{n, m}_{\varepsilon, \kappa})^\kappa)\psi)^\kappa\diff x\diff t\\
& + \int_0^\beta\int_{\Omega}((A^n)^\kappa T'_{k, \varepsilon}((u^n)^\kappa)- (A^m)^\kappa T'_{k, \varepsilon}((u^m)^\kappa))\cdot (\nabla_x (\omega^{n, m}_{\varepsilon, \kappa})^\kappa T'_{\delta}((\omega^{n, m}_{\varepsilon, \kappa})^\kappa)\psi(x))^\kappa\diff x\diff t\\
&+ \int_0^\beta\int_{\Omega}(A^n)^\kappa\,T'_{k,\varepsilon}((u^n)^\kappa)\cdot(T_{\delta}((\omega^{n, m}_{\varepsilon, \kappa})^\kappa)\nabla_x\psi)^\kappa \diff x\diff t\\
&\quad - \int_0^{\beta} \int_{\Omega}(A^m)^\kappa\,T'_{k,\varepsilon}((u^m)^\kappa)\cdot(T_{\delta}((\omega^{n, m}_{\varepsilon, \kappa})^\kappa)\nabla_x\psi)^\kappa\diff x\diff t.
\end{align*}
Next, we want to let $\kappa\to 0_+$. We  move forwards by proving two limits. For the first one, we want to show
\begin{equation}
\begin{split}\label{app:firstconvergencewithkappathirdterm}
\int_{0}^\beta\int_{\Omega} \left(A^n\right)^\kappa\cdot\nabla_x(u^n)^\kappa\,T''_{k,\varepsilon}((u^n)^\kappa)(T_{\delta}((\omega^{n, m}_{\varepsilon, \kappa})^\kappa)\psi)^\kappa\diff x\diff t\\
\longrightarrow \int_0^\beta\int_{\Omega}A^n\cdot\nabla_x u^n\,T''_{k,\varepsilon}(u^n)T_{\delta}(\omega^{n, m}_\varepsilon)\psi\diff x\diff t
\end{split}
\end{equation}
as $\kappa\to 0_+$. Due to our choice of $\kappa$ and $\psi$, we may write
\begin{align*}
&\left|\int_{0}^\beta\int_{\Omega} (A^n)^\kappa\cdot\nabla_x(u^n)^\kappa\,T''_{k,\varepsilon}((u^n)^\kappa)(T_{\delta}((\omega^{n, m}_{\varepsilon, \kappa})^\kappa)\psi)^\kappa -A^n\cdot\nabla_x u^n\,T''_{k,\varepsilon}(u^n)T_{\delta}(\omega^{n, m}_\varepsilon)\psi \diff x\diff t\right| \\
&\leq\|T''_{k,\varepsilon}((u^n)^\kappa)(T_{\delta}((\omega^{n, m}_{\varepsilon, \kappa})^\kappa)\psi)^\kappa\|_{L^{\infty}(\Omega_T)}\|\left(A^n\right)^\kappa\cdot\nabla_x(u^n)^\kappa\psi - A^n\cdot\nabla_x u^n\psi\|_{L^1(\Omega_T)}\\
&+ \int_0^\beta\int_{\Omega}\left|T''_{k,\varepsilon}((u^n)^\kappa)(T_{\delta}((\omega^{n, m}_{\varepsilon, \kappa})^\kappa)\psi)^\kappa - T''_{k,\varepsilon}(u^n)T_{\delta}(\omega^{n, m}_\varepsilon)\psi\right||A^n\cdot\nabla_x u^n|\diff x\diff t.
\end{align*}
Since from our assumptions
$$
\|T''_{k,\varepsilon}((u^n)^\kappa)(T_{\delta}((\omega^{n, m}_{\varepsilon, \kappa})^\kappa)\psi)^\kappa\|_{L^{\infty}(\Omega_T)}\leq C(\varepsilon, k, \delta, \psi),
$$
the first term may be treated by Proposition \ref{prop:convergence_of_convolution_in_musielak_orlicz} and Theorem \ref{thm:modular_l1} and the second one with the Lebesgue dominated convergence theorem. In the end, we obtain \eqref{app:firstconvergencewithkappathirdterm}. Similarly, we may prove
\begin{equation}\label{may}
\begin{split}
    \int_0^\beta\int_{\Omega}((A^n)^\kappa T'_{k, \varepsilon}((u^n)^\kappa)- (A^m)^\kappa T'_{k, \varepsilon}((u^m)^\kappa))\cdot (\nabla_x (\omega^{n, m}_{\varepsilon, \kappa})^\kappa T'_{\delta}((\omega^{n, m}_{\varepsilon, \kappa})^\kappa)\psi(x))^\kappa\diff x\diff t\\
    \to \int_0^\beta\int_{\Omega}
    A^n T'_{k, \varepsilon}(u^n)- A^m T'_{k, \varepsilon}(u^m)\cdot \nabla_x \omega^{n, m} T'_{\delta}(\omega^{n, m}_\varepsilon)\psi(x)\diff x\diff t
\end{split}
\end{equation}
and also
\begin{equation}
\begin{split}\label{app:convergenceinkappathirdtermsecondone}
    \int_0^\beta\int_{\Omega}(A^n)^\kappa\, T'_{k,\varepsilon}((u^n)^\kappa)\cdot(T_{\delta}((\omega^{n, m}_{\varepsilon, \kappa})^\kappa)\nabla_x\psi)^\kappa\diff x\diff t\\
    \rightarrow \int_0^\beta\int_{\Omega}A^n\, T'_{k,\varepsilon}(u^n)\cdot T_{\delta}(\omega^{n, m}_\varepsilon)\nabla_x\psi\diff x\diff t.
\end{split}
\end{equation}
Now, with the use of \eqref{boundsfirstterm}--\eqref{app:convergenceinkappathirdtermsecondone} we get
\begin{equation}\label{boundsbeforeconvergencewithepsilon}
\begin{split}
Y^{n,m,k}_{\beta,\varepsilon,\delta}&:=\int_0^\beta\int_{\Omega}(A^n T'_{k, \varepsilon}(u^n)- A^m T'_{k, \varepsilon}(u^m))\cdot \nabla_x \omega^{n, m}_\varepsilon\, T'_{\delta}(\omega^{n, m}_\varepsilon)\psi(x)\diff x\diff t\\
    &\leq \int_0^\beta\int_{\Omega}A^m\cdot\nabla_x u^m\,T''_{k,\varepsilon}(u^m)T_{\delta}(\omega^{n, m}_\varepsilon)\psi - A^n\cdot\nabla_x u^n\,T''_{k,\varepsilon}(u^n)T_{\delta}(\omega^{n, m}_\varepsilon)\psi\diff x\diff t\\
    &+ \int_0^\beta\int_{\Omega}A^m\,T'_{k,\varepsilon}(u^m)\cdot T_{\delta}(\omega^{n, m}_\varepsilon)\nabla_x\psi - A^n\,T'_{k,\varepsilon}(u^n)\cdot T_{\delta}(\omega^{n, m}_\varepsilon)\nabla_x\psi\diff x\diff t \\
    &\qquad + C\delta\|\psi\|_{L^{\infty}(\Omega)}.
\end{split}
\end{equation}
At this point, we want to let $\varepsilon\to 0_+$. It is rather standard in the term on the left-hand side and also in the second term on the right-hand side, so we have
\begin{equation}
\begin{split}
&Y^{n,m,k}_{\beta,\delta}:=\int_0^\beta\int_{\Omega}(A(t, x, \nabla_x T_{k}(u^n))- A(t, x, \nabla_x T_{k}(u^m))\cdot \nabla_x T_{\delta}(\omega^{n, m})\psi\diff x\diff t\\
&\leq \limsup_{\varepsilon \to 0_+}\int_0^\beta\int_{\Omega}A^m\cdot\nabla_x u^m\,T''_{k,\varepsilon}(u^m)T_{\delta}(\omega^{n, m}_\varepsilon)\psi - A^n\cdot\nabla_x u^n\,T''_{k,\varepsilon}(u^n)T_{\delta}(\omega^{n, m}_\varepsilon)\psi\diff x\diff t\\
&+ \int_0^\beta\int_{\Omega}T_{\delta}(\omega^{n, m})\left(A(t, x, \nabla_x T_{k}(u^m))- A(t, x, \nabla_x T_{k}(u^m))\right)\cdot \nabla_x\psi\diff x\diff t \\
&\qquad + C\delta\|\psi\|_{L^{\infty}(\Omega)}\\
&\leq \limsup_{\varepsilon \to 0_+}\int_0^\beta\int_{\Omega}A^m\cdot\nabla_x u^m\,T''_{k,\varepsilon}(u^m)T_{\delta}(\omega^{n, m}_\varepsilon)\psi - A^n\cdot\nabla_x u^n\,T''_{k,\varepsilon}(u^n)T_{\delta}(\omega^{n, m}_\varepsilon)\psi\diff x\diff t\\
&\qquad + C(k)\delta\|\psi\|_{W^{1,\infty}(\Omega)},
\end{split}\label{abc}
\end{equation}
where for the last inequality we have used \eqref{rem:remarkonptxbounds}.

To converge in the first term on the right-hand side of \eqref{abc}, we need to deal with the second derivative of $T_{k,\varepsilon}$, which is surely unbounded. To this end, we once again use the equation \eqref{eq:weak_equality_extensions} and test it with
$$
\phi := \left(T'_{k,\varepsilon}(\overline{u^n}_{+}^\kappa)\gamma_{-\eta,\beta}^\tau(t)\psi(x)\right)^\kappa,
$$
where non-negative $\psi$ is the same as above, $\kappa$ is sufficiently small and $\overline{u^n}^{\kappa}_{+}:=\max\{0,\overline{u^n}^{\kappa}\}$ denotes the positive part. Doing so, we have
\begin{equation*}
\begin{split}
     -\int_{-T}^T\int_{\Omega}(\overline{A^n})^{\kappa} \cdot \nabla_x \phi \diff x \diff t = -\int_{-T}^T\int_{\Omega}\overline{f^n} \,\phi+\overline{u^n}\,\p_t \phi \diff x \diff t
\end{split}
\end{equation*}
and letting $\tau\to 0_+$ we easily obtain, after integration by parts with respect to the time variable, that
\begin{equation}\label{druha}
\begin{split}
     &-\int_{0}^\beta\int_{\Omega}(A^n)^{\kappa}\cdot \nabla_x(u^n)^\kappa_{+} T''_{k,\varepsilon}((u^n)^\kappa_{+})\psi(x) \diff x \diff t = -\int_{0}^\beta\int_{\Omega}(f^n)^{\kappa}\,T'_{k,\varepsilon}((u^n)^\kappa_{+})\psi(x)\diff x \diff t \\
     &\qquad  -\int_{0}^\beta\int_{\Omega}\p_t(u^n)^{\kappa}\,T'_{k,\varepsilon}((u^n)^\kappa_{+})\psi(x) \diff x \diff t +\int_{0}^\beta\int_{\Omega}(A^n)^{\kappa}\cdot \nabla_x \psi(x) T'_{k,\varepsilon}((u^n)^\kappa_{+}) \diff x \diff t.
\end{split}
\end{equation}

Next, we estimate all the terms on the right-hand side and let $\kappa\to 0_+$. We start with the time derivative.
\begin{align*}
\int_{0}^\beta\int_{\Omega}\p_t(u^n)^{\kappa}\,T'_{k,\varepsilon}((u^n)^\kappa_{+})\psi(x) \diff x \diff t  &= \int_{0}^\beta\int_{\Omega}\p_tT_{k,\varepsilon}((u^n)^\kappa_{+})\psi(x) \diff x \diff t\\
&= \int_{\Omega}T_{k,\varepsilon}((u^n)^\kappa_{+}(\beta))\psi(x) \diff x -\int_{\Omega}T_{k,\varepsilon}((u^n)^\kappa_{+}(0)\psi(x) \diff x.
\end{align*}
Using the definition of $T_{k,\varepsilon}$, and the fact that $\varepsilon\in (0,1)$, we deduce
\begin{align}\label{firstlimitforepsilonbound}
    \left|\limsup_{\kappa\to 0_+}\int_{0}^\beta\int_{\Omega}\p_t(u^n)^{\kappa}\,T'_{k,\varepsilon}((u^n)^\kappa_{+})\psi(x) \diff x \diff t \right|\leq Ck\|\psi\|_{L^{\infty}(\Omega)}.
\end{align}
Next, we may rewrite
\begin{align*}
    \int_{-T}^T\int_{\Omega}\overline{f^n}\,\phi\diff x\diff t = \int_0^\beta\int_{\Omega}f^n(T'_{k,\varepsilon}((u^n)^\kappa)\psi(x))^\kappa\diff x\diff t,
\end{align*}
and using the fact that $|T'_{k,\varepsilon}|\le 1$, the uniform bound \eqref{L1E}, and the H\"{o}lder inequality, we have
\begin{align}\label{secondlimitforepsilonbound}
    \left| \limsup_{\kappa\to 0_+}\int_{0}^\beta\int_{\Omega}(f^n)^{\kappa}\,T'_{k,\varepsilon}((u^n)^\kappa_{+})\psi(x)\diff x \diff t\right|\leq C\|\psi\|_{L^{\infty}(\Omega)}.
\end{align}
For the remaining term on the right-hand side of \eqref{druha}, we first use Theorem~\ref{thm:modular_l1} to let $\kappa\to 0_+$,  then the uniform bound~\eqref{rem:remarkonptxbounds}, as well as the fact that $T'_{k,\varepsilon}(s)=0$ whenever $|s|>k+1$, to deduce
\begin{equation}
\begin{aligned}\label{thirdlimitforepsilonbound}
  \lim_{\kappa\to 0_+}  &\left|\int_{0}^\beta\int_{\Omega}(A^n)^{\kappa}\cdot \nabla_x \psi(x) T'_{k,\varepsilon}((u^n)^\kappa_{+}) \diff x \diff t \right|\\
  &=  \left|\int_{0}^\beta\int_{\Omega}\frac{A^n}{1+|u^n|}\cdot \nabla_x \psi(x) (1+|u^n|) T'_{k,\varepsilon}((u^n)_{+}) \diff x \diff t \right|\\
  &\le Ck\|\psi\|_{W^{1,\infty}(\Omega)}\left(1+\int_{0}^\beta\int_{\Omega}\frac{|A^n|^{p(t,x)}}{(1+|u^n|)^{p_{\min}}} \diff x \diff t \right)\le Ck\|\psi\|_{W^{1,\infty}(\Omega)}.
\end{aligned}
\end{equation}
%
%
Finally,  we may repeat the proof of \eqref{app:firstconvergencewithkappathirdterm} to pass to the limit with $\kappa \to 0_+$ in the term on the left hand side
\begin{align}
\begin{aligned}\label{cruciallimitforepsilonbound}
    &\lim_{\kappa\to 0_+}-\int_0^T\int_{\Omega}(A^n)^\kappa\cdot \nabla_x(u^n)^\kappa_+\,T''_{k,\varepsilon}((u^n)^\kappa_+)\psi(x)\diff x\diff t \\
    &\qquad = -\int_0^T\int_{\Omega}A^n\cdot\nabla_x u^n_+\,T''_{k,\varepsilon}(u^n_+)\psi(x)\diff x\diff t\\
    &\qquad =\int_0^T\int_{\Omega}|A(t,x,\nabla_x u^n)\cdot\nabla_x u^n_+\,T''_{k,\varepsilon}(u^n_+)\psi(x)|\diff x\diff t,
\end{aligned}
\end{align}
where for the last identity, we have used the fact that $T''_{k,\varepsilon}(s)\le 0$ for all $s\ge 0$ and the monotonicity of $A$, i.e., \eqref{intro:assumA_mono}--\eqref{intro:assumA_vanish}.
%
Combination of \eqref{druha}, \eqref{firstlimitforepsilonbound}, \eqref{secondlimitforepsilonbound}, \eqref{cruciallimitforepsilonbound}, \eqref{thirdlimitforepsilonbound}
directly leads to
\begin{align*}
\int_{\Omega_T}\left|A^n\cdot\nabla_x u^n_+\,T''_{k,\varepsilon}(u^n_+)\psi(x)\right|\diff x\diff t \leq C(k,\|\psi\|_{W^{1,\infty}(\Omega)}).
\end{align*}
In a very similar way, we can get the same result also for $u^n_{-}$ and therefore we have
\begin{align}\label{boundsthirdterm}
\int_{\Omega_T}\left|A^n\cdot\nabla_x u^n\,T''_{k,\varepsilon}(u^n)\psi(x)\right|\diff x\diff t \leq  C(k,\|\psi\|_{W^{1,\infty}(\Omega)}).
\end{align}
Thus, we may go back to \eqref{abc}, use  the H\"{o}lder inequality and insert the estimate~\eqref{boundsthirdterm} and let $\beta\to T_{-}$ to observe
\begin{equation}
\begin{split}
&\int_{\Omega_T}(A(t, x, \nabla_x T_{k}(u^n))- A(t, x, \nabla_x T_{k}(u^m))\cdot \nabla_x T_{\delta}(\omega^{n, m})\psi\diff x\diff t\\
&\leq \limsup_{\varepsilon \to 0_+}\int_{\Omega_T}A^m\cdot\nabla_x u^m\,T''_{k,\varepsilon}(u^m)T_{\delta}(\omega^{n, m}_\varepsilon)\psi - A^n\cdot\nabla_x u^n\,T''_{k,\varepsilon}(u^n)T_{\delta}(\omega^{n, m}_\varepsilon)\psi\diff x\diff t\\
&\qquad + C(k)\delta\|\psi\|_{W^{1,\infty}(\Omega)}\\
&\le \delta \|\psi\|_{L^{\infty}(\Omega)}\limsup_{\varepsilon\to 0_+}\int_{\Omega_T}|A^m\cdot\nabla_x u^m\,T''_{k,\varepsilon}(u^m)|+ |A^n\cdot\nabla_x u^n\,T''_{k,\varepsilon}(u^n)|\diff x\diff t\\
&\qquad + C(k)\delta\|\psi\|_{W^{1,\infty}(\Omega)}\\
&  \le C(k,\|\psi\|_{W^{1,\infty}(\Omega)})\delta.
\end{split}\label{app:limitoftheproductwithdelta}
\end{equation}
%
Having this, we may proceed to the proof of~\eqref{app:limitoftheproductinl1}. First, we fix $\delta > 0$. Then,
\begin{equation*}
\begin{split}
    &\int_{\Omega_T}\left|(A(t, x, \nabla_x T_k(u^n)) - A(t, x, \nabla_x T_k(u^m)))\cdot(\nabla_x T_k(u^n) - \nabla_x T_k(u^m))\psi(x)\right|^\mu\diff x\diff t\\
    &=\int_{\{|T_k(u^n) - T_k(u^m)| < \delta\}}\hspace{-50pt}\left|(A(t, x, \nabla_x T_k(u^n)) - A(t, x, \nabla_x T_k(u^m)))\cdot \nabla_x T_{\delta}(\omega^{n, m}) \psi(x)\right|^\mu\diff x\diff t\\
    &+ \int_{\{|T_k(u^n) - T_k(u^m)| > \delta\}}\hspace{-50pt}\left|(A(t, x, \nabla_x T_k(u^n)) - A(t, x, \nabla_x T_k(u^m)))\cdot(\nabla_x T_k(u^n) - \nabla_x T_k(u^m))\psi(x)\right|^\mu\diff x\diff t\\
    &\le \left(\int_{\Omega_T}(A(t, x, \nabla_x T_k(u^n)) - A(t, x, \nabla_x T_k(u^m)))\cdot \nabla_x T_{\delta}(\omega^{n, m}) \psi(x)\diff x\diff t\right)^\mu\\
    &\quad + C(k,\psi)|\{|T_k(u^n) - T_k(u^m)| > \delta\}|\\
    &\le C(k,\psi)\left(\delta^{\mu} + |\{|T_k(u^n) - T_k(u^m)| > \delta\}|\right),
\end{split}
\end{equation*}
where we subsequently used the H\"{o}lder inequality, the estimate \eqref{app:limitoftheproductwithdelta}, and the uniform bound~\eqref{rem:remarkonptxbounds}.
Next, by  Lemma~\ref{app:almosteverywhereconvergence} we know that $T_{k}(u^n)\to T_k(u)$ in measure. Consequently, letting $n,m \to \infty$ in the inequality above, we see that
\begin{equation*}
\begin{split}
    &\lim_{n,m\to \infty}\int_{\Omega_T}\left|(A(t, x, \nabla_x T_k(u^n)) - A(t, x, \nabla_x T_k(u^m)))\cdot(\nabla_x T_k(u^n) - \nabla_x T_k(u^m))\psi(x)\right|^\mu\diff x\diff t\\
    &\le C(k,\psi)\delta^{\mu}.
\end{split}
\end{equation*}
Since $\delta>0$ is arbitrary, we in fact proved that \begin{equation*}
    \begin{split}
        &\lim_{n, m\to +\infty}\int_{\Omega_T}\left|(A(t, x, \nabla_x T_k(u^n)) - A(t, x, \nabla_x T_k(u^m)))\cdot(\nabla_x T_k(u^n) - \nabla_x T_k(u^m))\psi(x)\right|^\mu\diff x\diff t = 0,
    \end{split}
\end{equation*}
which is the desired limit in \eqref{app:limitoftheproductinl1}.
\end{proof}

\subsection{Identification of the weak limits in the nonlinearities} Now, our final goal is to identify~$B_k$ defined via~\eqref{weaklimitoperatortrunc} as $A(t, x, \nabla_x T_k(u))$. We use the monotone operator theory but first, due to low integrability, we need to select proper sets, which is done in the lemma below.
\begin{lem}\label{lem:monotonicitytrickneededinequality}
Let $u^n$ be as in Proposition~\ref{app:propositionofexistence}. Then, there exists a non-increasing sequence of sets $E_j\subset \Omega_T$ such that $\lim_{j\to\infty}|E_j| = 0$ and
\begin{align}
    \limsup_{n\to\infty}\int_{\Omega_T \setminus E_j}A(t, x, \nabla_x T_k(u^n))\cdot\nabla_x T_k(u^n)\psi(x)\diff x\diff t = \int_{\Omega_T\setminus E_j}B_k \cdot \nabla_x T_k(u)\psi(x)\diff x\diff t,
\end{align}
for any $\psi\in L^{\infty}(\Omega)$.
\end{lem}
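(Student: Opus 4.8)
The plan is to localise the problem onto the complements of small ``biting'' sets and then to identify the resulting $L^1$-weak limit by combining monotone operator theory with the quantitative estimate of Lemma~\ref{lem:limitoftheproductinpowermu}.

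First I would observe that, by \ref{intro:assumA_mono} and \ref{intro:assumA_vanish}, the functions $g^n_k:=A^n_k\cdot\nabla_x T_k(u^n)$ are non-negative, and that by Young's inequality together with \eqref{app:energytruncl1} they are bounded in $L^1(\Omega_T)$ uniformly in $n$. Applying the biting lemma of Chacón --- and, by a diagonal argument over $k\in\mathbb{N}$ together with the subsequence already fixed in Lemma~\ref{lem:boundsontruncations}, keeping the same sets for all truncation levels --- I would produce a non-increasing sequence of measurable sets $E_j\subset\Omega_T$ with $|E_j|\to0$, a further (not relabelled) subsequence, and functions $g_k\in L^1(\Omega_T)$ such that for every $j$ the family $\{g^n_k\}_n$ is uniformly integrable on $\Omega_T\setminus E_j$ and $g^n_k\rightharpoonup g_k$ weakly in $L^1(\Omega_T\setminus E_j)$. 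Invoking then \ref{intro:ass_on_A:coercgr}, i.e.\ $|\nabla_x T_k(u^n)|^{p(t,x)}+|A^n_k|^{p'(t,x)}\le c\,g^n_k+h$ with $h\in L^\infty(\Omega_T)$, I would conclude that on each $\Omega_T\setminus E_j$ the families $\{|\nabla_x T_k(u^n)|^{p(t,x)}\}_n$ and $\{|A^n_k|^{p'(t,x)}\}_n$ are uniformly integrable as well. Since along the chosen subsequence $\int_{\Omega_T\setminus E_j}g^n_k\,\psi\to\int_{\Omega_T\setminus E_j}g_k\,\psi$ for every $\psi\in L^\infty(\Omega)$, it would then remain to prove $\int_{\Omega_T\setminus E_j}g_k\,\psi=\int_{\Omega_T\setminus E_j}B_k\cdot\nabla_x T_k(u)\,\psi$.

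For this identification I would fix $\psi\in C^\infty_c(\Omega)$ and expand
$$
W^{n,m}:=(A^n_k-A^m_k)\cdot(\nabla_x T_k(u^n)-\nabla_x T_k(u^m))=g^n_k+g^m_k-A^n_k\cdot\nabla_x T_k(u^m)-A^m_k\cdot\nabla_x T_k(u^n).
$$
Using Young's inequality in the variable exponent setting and \ref{intro:ass_on_A:coercgr}, each of the four products above is dominated by $c(g^n_k+g^m_k)+Ch$, so $\{W^{n,m}\}_{n,m}$ is uniformly integrable on $\Omega_T\setminus E_j$. On the other hand, Lemma~\ref{lem:limitoftheproductinpowermu} and $|\Omega_T|<\infty$ force $W^{n,m}\psi\to0$ in measure in the iterated sense $\limsup_{n}\limsup_{m}$, and together with the uniform integrability a Vitali-type argument gives $\limsup_{n\to\infty}\limsup_{m\to\infty}\int_{\Omega_T\setminus E_j}W^{n,m}\,\psi\,\diff x\diff t=0$. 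I would then evaluate the same double limit via the available weak convergences: passing first $m\to\infty$ and then $n\to\infty$, and using that $g^\ell_k\rightharpoonup g_k$ weakly in $L^1(\Omega_T\setminus E_j)$, $A^\ell_k\rightharpoonup B_k$ weakly in $L^{p'(t,x)}(\Omega_T)$ and $\nabla_x T_k(u^\ell)\rightharpoonup\nabla_x T_k(u)$ weakly in $L^{p(t,x)}(\Omega_T)$ as $\ell\to\infty$ (Lemma~\ref{lem:boundsontruncations} and Lemma~\ref{app:almosteverywhereconvergence}) --- each mixed product converging weakly in $L^1$ because one of its factors stays fixed in the appropriate dual space --- I obtain $\int_{\Omega_T\setminus E_j}W^{n,m}\psi\to 2\int_{\Omega_T\setminus E_j}(g_k-B_k\cdot\nabla_x T_k(u))\,\psi$. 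Comparing the two evaluations yields $\int_{\Omega_T\setminus E_j}(g_k-B_k\cdot\nabla_x T_k(u))\,\psi=0$ for all $\psi\in C^\infty_c(\Omega)$; as both integrands lie in $L^1(\Omega_T)$, a density argument extends this to all $\psi\in L^\infty(\Omega)$, which is the assertion.

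The hard part is the low integrability of $g^n_k$: a priori it is only $L^1$-bounded, so there is neither weak $L^1$ compactness nor any uniform integrability, and in particular one cannot pass to the limit in the mixed products $A^n_k\cdot\nabla_x T_k(u^m)$. The biting lemma restores uniform integrability off a small set, and the crucial structural point is that it does so for \emph{all} $g^n_k$ on the \emph{same} sets $\Omega_T\setminus E_j$; then \ref{intro:ass_on_A:coercgr} propagates uniform integrability to the modulars $|\nabla_x T_k(u^n)|^{p(t,x)}$, $|A^n_k|^{p'(t,x)}$ and hence to every term of $W^{n,m}$, uniformly in both $n$ and $m$, which is exactly what legitimises the Vitali passage. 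The only additional technical care is that Lemma~\ref{lem:limitoftheproductinpowermu} delivers an iterated rather than a joint limit, an order which all of the limit passages above must respect.
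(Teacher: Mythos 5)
Your proposal is correct and follows essentially the same route as the paper: Chacon's biting lemma applied to $A^n_k\cdot\nabla_x T_k(u^n)$ to produce the sets $E_j$ and restore uniform integrability, the four-term expansion of the monotonicity cross term, Lemma~\ref{lem:limitoftheproductinpowermu} combined with a Vitali-type argument to annihilate the iterated double limit, and the weak convergences \eqref{weaklimitoperatortrunc}--\eqref{weaklimittrunaction} to identify the mixed products. The only (harmless) deviations are that the paper applies the biting lemma directly to $V_{n,m}$ as well and then uses a.e.\ convergence plus Vitali, whereas you derive the equiintegrability of $W^{n,m}$ from the coercivity bound \ref{intro:ass_on_A:coercgr} and Young's inequality and work with convergence in measure.
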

\begin{proof}
Since we can chose $\psi$ in Lemma~\ref{lem:limitoftheproductinpowermu} arbitrarily, we know that up to the subsequence
\begin{align}\label{almosteverywhereconvergenceofvnm}
V_{n, m} := (A(t, x, \nabla_x T_k(u^n)) - A(t, x, \nabla_x T_k(u^m)))\cdot(\nabla_x T_k(u^n) - \nabla_x T_k(u^m)) \to 0 \text{ a.e. in }\Omega_T
\end{align}
as $n,m\to \infty$. Furthermore, by \eqref{app:energytruncl1} we know that sequences  $\{|A(t,x,\nabla_x T_k(u^n))|^{p'(t,x)}\}_{n\in \N}$,  $\{|\nabla_x T_k(u^n)|^{p(t,x)}\}_{n\in \N}$ and $\{V_{n,m}\}_{n,m\in\N}$ are bounded in $L^1(\Omega_T)$. Thus, by the Chacon biting lemma, there exists a non-increasing sequence of sets $E_j\subset \Omega_T$ such that $\lim_{j\to\infty}|E_j| = 0$ and the functions $V$, $\overline{A\cdot \nabla_x T_k(u)}$ belonging to $L^1(\Omega_T)$, such that up to the subsequence
\begin{equation}\label{opl}
\begin{aligned}
V_{n, m} &\rightharpoonup V &&\text{ weakly in }L^1(\Omega_T\setminus E_j),\\
A(t,x,\nabla_x T_k(u^n))\cdot \nabla_x T_k(u^n) &\rightharpoonup \overline{A\cdot \nabla_x T_k(u)}&&\text{ weakly in }L^1(\Omega_T\setminus E_j).
\end{aligned}
\end{equation}
In particular, by the Dunford--Pettis theorem, $\{V_{n, m}\}$ is equiintegrable in $L^1(\Omega_T\setminus E_j)$. This, together with \eqref{almosteverywhereconvergenceofvnm} and the Vitali convergence theorem, gives us
\begin{align}
    V_{n,m} \rightarrow 0 \text{ strongly in }L^1(\Omega_T\setminus E_j).
\end{align}
Hence, for any $\psi\in L^{\infty}(\Omega)$, we have
\begin{equation*}
\begin{split}
    0 &= \lim_{n\to \infty} \left(\lim_{m\to \infty} \int_{\Omega_T\setminus E_j} V_{n,m} \psi \diff x \diff t\right)\\
    &=\lim_{n\to\infty}\int_{\Omega_T\setminus E_j}A(t, x, \nabla_x T_k(u^n))\cdot \nabla_x T_k(u^n) \psi\diff x\diff t\\
    &\quad +\lim_{m\to \infty}\int_{\Omega_T\setminus E_j}A(t, x, \nabla_x T_k(u^m))\cdot \nabla_x T_k(u^m)\psi\diff x\diff t\\
    &-\quad \lim_{n\to \infty}\left(\lim_{m\to \infty}\int_{\Omega_T\setminus E_j}A(t, x, \nabla_x T_k(u^m))\cdot \nabla_x T_k(u^n)\psi\diff x\diff t\right)\\
    &-\quad \lim_{n\to \infty}\left(\lim_{m\to \infty}\int_{\Omega_T\setminus E_j}A(t, x, \nabla_x T_k(u^n))\cdot \nabla_x T_k(u^m)\psi\diff x\diff t\right)\\
    &= 2\int_{\Omega_T \setminus E_j}\overline{A\cdot \nabla_x T_k(u)}\psi(x)\diff x\diff t - 2\int_{\Omega_T\setminus E_j}B_k \cdot \nabla_x T_k(u)\psi(x)\diff x\diff t,
\end{split}
\end{equation*}
where we have used the weak convergence result \eqref{opl} to identify the first term and also  \eqref{weaklimitoperatortrunc} and \eqref{weaklimittrunaction} to converge with mixed terms, and we remembered that $v_k$ has been identified with $\nabla_x T_k(u)$ in Lemma \ref{app:almosteverywhereconvergence}. This finishes the proof.
\end{proof}

Having proven the lemma above, we may proceed with the identification of $B_k$.
\begin{lem}\label{lem:identificationofthelimit}
Let $u^n$ be as in the Proposition \ref{app:propositionofexistence} and $B_k$ be defined via \eqref{weaklimitoperatortrunc}, then
$$
B_k = A(t, x, \nabla_x T_k(u)) \text{ a.e. in }\Omega_T.
$$
\end{lem}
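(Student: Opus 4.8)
The plan is to identify $B_k$ by the classical Minty monotone-operator argument, carried out on the ``good'' sets $\Omega_T\setminus E_j$ supplied by Lemma~\ref{lem:monotonicitytrickneededinequality}, and then to exhaust $\Omega_T$ by letting $j\to\infty$.

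First I would fix $j\in\N$, a non-negative $\psi\in L^\infty(\Omega)$ and an \emph{arbitrary} field $w\in L^{p(t,x)}(\Omega_T;\R^d)$. By the monotonicity assumption~\ref{intro:assumA_mono} one has, for every $n$,
$$
0\le \int_{\Omega_T\setminus E_j}\bigl(A(t,x,\nabla_x T_k(u^n))-A(t,x,w)\bigr)\cdot\bigl(\nabla_x T_k(u^n)-w\bigr)\psi\diff x\diff t .
$$
Expanding the product into four pieces, I pass to the limit along a subsequence of $n$ for which the $\limsup$ in Lemma~\ref{lem:monotonicitytrickneededinequality} is attained. Along it: the ``diagonal'' term $\int A(t,x,\nabla_x T_k(u^n))\cdot\nabla_x T_k(u^n)\psi$ tends to $\int B_k\cdot\nabla_x T_k(u)\psi$ by Lemma~\ref{lem:monotonicitytrickneededinequality}; the cross term $\int A(t,x,\nabla_x T_k(u^n))\cdot w\,\psi$ tends to $\int B_k\cdot w\,\psi$ by the weak convergence~\eqref{weaklimitoperatortrunc} tested against $\mathbf 1_{\Omega_T\setminus E_j}w\psi\in L^{p(t,x)}(\Omega_T;\R^d)$; the cross term $\int A(t,x,w)\cdot\nabla_x T_k(u^n)\psi$ tends to $\int A(t,x,w)\cdot\nabla_x T_k(u)\psi$ by~\eqref{weaklimittrunaction} together with the identification $v_k=\nabla_x T_k(u)$ from Lemma~\ref{app:almosteverywhereconvergence}, tested against $\mathbf 1_{\Omega_T\setminus E_j}A(t,x,w)\psi$, which lies in $L^{p'(t,x)}$ thanks to the growth bound in~\ref{intro:ass_on_A:coercgr} (giving $|A(t,x,w)|^{p'(t,x)}\le C(|w|^{p(t,x)}+h(t,x)+1)$); and the remaining term does not depend on $n$. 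Since each member of the sequence is $\ge 0$, its limit is $\ge 0$, so
$$
0\le \int_{\Omega_T\setminus E_j}\bigl(B_k-A(t,x,w)\bigr)\cdot\bigl(\nabla_x T_k(u)-w\bigr)\psi\diff x\diff t .
$$

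The Minty step follows. Since $\nabla_x T_k(u)\in L^{p(t,x)}(\Omega_T;\R^d)$, for arbitrary $z\in L^{p(t,x)}(\Omega_T;\R^d)$ and $\sigma\in(0,1)$ I take $w:=\nabla_x T_k(u)-\sigma z$ in the last inequality and divide by $\sigma$, obtaining
$$
0\le \int_{\Omega_T\setminus E_j}\bigl(B_k-A(t,x,\nabla_x T_k(u)-\sigma z)\bigr)\cdot z\,\psi\diff x\diff t .
$$
Letting $\sigma\to 0_+$, the Carath\'eodory continuity~\ref{intro:ass_on_A:continuity} gives $A(t,x,\nabla_x T_k(u)-\sigma z)\to A(t,x,\nabla_x T_k(u))$ a.e., while~\ref{intro:ass_on_A:coercgr} keeps $\{A(t,x,\nabla_x T_k(u)-\sigma z)\}_{\sigma\in(0,1)}$ bounded in $L^{p'(t,x)}(\Omega_T\setminus E_j;\R^d)$; hence the products $A(t,x,\nabla_x T_k(u)-\sigma z)\cdot z\,\psi$ are equiintegrable on the bounded set $\Omega_T\setminus E_j$ and Vitali's theorem yields
$$
0\le \int_{\Omega_T\setminus E_j}\bigl(B_k-A(t,x,\nabla_x T_k(u))\bigr)\cdot z\,\psi\diff x\diff t .
$$
Replacing $z$ by $-z$ turns this into an equality for every $z\in L^{p(t,x)}(\Omega_T;\R^d)$, and choosing $\psi\equiv\mathbf 1_\Omega$ (admissible, $\Omega$ being bounded) forces $B_k=A(t,x,\nabla_x T_k(u))$ a.e. on $\Omega_T\setminus E_j$. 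Since $\{E_j\}$ is non-increasing with $|E_j|\to0$, we have $\bigl|\bigcap_j E_j\bigr|=0$, so the identity holds a.e. on the whole of $\Omega_T$.

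The part I expect to require the most care is not the algebra of the monotonicity trick but the integrability bookkeeping in the variable-exponent, ``biting'' setting: one must check that $w$, $z$, $A(t,x,w)$ and the whole family $\{A(t,x,\nabla_x T_k(u)-\sigma z)\}_\sigma$ sit in the correct spaces $L^{p(t,x)}$, resp.\ $L^{p'(t,x)}$, over $\Omega_T\setminus E_j$, so that the weak-convergence pairings and the final $\sigma\to0_+$ passage via Vitali's theorem are legitimate --- and it is precisely assumption~\ref{intro:ass_on_A:coercgr}, controlling $|A(t,x,\xi)|^{p'(t,x)}$ by $|\xi|^{p(t,x)}$, that makes all these memberships hold.
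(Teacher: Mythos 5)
Your argument is correct and follows the paper's proof essentially verbatim: the same monotonicity inequality on $\Omega_T\setminus E_j$, the same limit passage combining Lemma~\ref{lem:monotonicitytrickneededinequality} for the diagonal term with the weak convergences \eqref{weaklimitoperatortrunc}--\eqref{weaklimittrunaction} for the cross terms, and the exhaustion $j\to\infty$ at the end. The only cosmetic difference is that you execute the Minty linearization ($w=\nabla_x T_k(u)-\sigma z$, $\sigma\to 0_+$, Vitali) by hand for test fields $w\in L^{p(t,x)}(\Omega_T;\R^d)$, whereas the paper restricts to $\eta\in L^{\infty}(\Omega_T;\R^d)$ and outsources exactly this step to the cited Lemma~\ref{res:monot_trick}.
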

\begin{proof}
Fix an arbitrary $\eta\in L^\infty(\Omega_T; \R^d)$ and an arbitrary, non-negative $\psi \in L^{\infty}(\Omega)$. Then, by Assumption~\ref{intro:assumA_mono}
\begin{align}\label{someinequalityformonotonicity}
    \int_{\Omega_T\setminus E_j}(A(t, x, \nabla_x T_k(u^n)) - A(t, x, \eta))\cdot(\nabla_x T_k(u^n) - \eta)\psi(x)\diff x\diff t \geq 0.
\end{align}
Next, we use of \eqref{weaklimitoperatortrunc} and \eqref{weaklimittrunaction} to deduce
\begin{align*}
    \int_{\Omega_T\setminus E_j}A(t, x, \nabla_x T_k(u^n))\cdot\eta\,\psi(x)\diff x\diff t &\rightarrow \int_{\Omega_T\setminus E_j}B_k\cdot\eta\,\psi(x)\diff x\diff t,\\
    \int_{\Omega_T\setminus E_j}A(t, x, \eta)\cdot\nabla_x T_k(u^n)\,\psi(x)\diff x\diff t &\rightarrow  \int_{\Omega_T\setminus E_j}A(t, x, \eta)\cdot\nabla_x T_k(u)\,\psi(x)\diff x\diff t.
\end{align*}
Thus, with the use of Lemma \ref{lem:monotonicitytrickneededinequality} and inequality \eqref{someinequalityformonotonicity} we arrive at
\begin{align*}
    \int_{\Omega_T\setminus E_j}(B_k - A(t, x, \eta))\cdot(\nabla_x T_k(u) - \eta)\psi(x)\diff x\diff t \geq 0.
\end{align*}
Using Lemma \ref{res:monot_trick} we immediately get
$$
B_k = A(t, x, \nabla_x T_k(u)) \text{ a.e. in }\Omega_T\setminus E_j.
$$
Finally, since $\lim_{j\to \infty}|E_j| = 0$, we arrive at the needed conclusion.
\end{proof}

\subsection{Existence of a weak solution}
In this part we deal with the assumptions on the lower bound of the function $p$, and we want to let $n\to \infty$ in \eqref{app:weakformulation} to obtain \eqref{wfweak} and \eqref{wfweakII}, which finishes the proof of the existence of weak solution.
Using Remark~\ref{rem:boundednessofnalbaxuandoperator}, we see that for a subsequence
\begin{equation}
\begin{aligned}
A^n &\rightharpoonup \overline{A} &&\text{ weakly in }L^a(\Omega_T; \mathbb{R}^d),\\
u^n &\rightharpoonup u &&\text{ weakly in }L^a(\Omega_T),
\end{aligned}\label{wcre}
\end{equation}
for some $a>1$. Note that at this point we have only used the assumption $p_{\min}>\frac{2d}{d+1}$. In case we assume that $p_{\min}>\frac{2d+1}{d+1}$, we can obtain
\begin{equation*}
\begin{aligned}
\nabla_x u^n &\rightharpoonup \nabla_x u &&\text{ weakly in }L^a(\Omega_T; \mathbb{R}^d).
\end{aligned}
\end{equation*}
Therefore, we can use the properties of $f^n$ and $u_0^n$ stated in~\eqref{app:strongconvergenceoffM}--\eqref{app:strongconvergenceofboundarydataM} and let $n\to \infty$ in \eqref{app:weakformulation} to conclude that
\begin{align}\label{wfweakb}
    \int_{\Omega_T}-u\,\p_t\phi(t, x)\diff x\diff t + \int_{\Omega_T}\overline{A} \cdot \nabla_x \phi\diff x\diff t = \langle f,\phi\rangle_{\mathcal{M}(\Omega_T)} + \langle u_0, \phi(0, \cdot)\rangle_{\mathcal{M}(\Omega)}
\end{align}
for all $\phi\in C^\infty_c((-\infty, T)\times\Omega)$. It remains to show that
\begin{equation}
\label{AequalA}
\overline{A}=A(t,x,\nabla_x u) \textrm{ a.e. in } \Omega_T.
\end{equation}
To do so, we use the weak lower semi-continuity of norms and the weak convergence results \eqref{wcre} and  \eqref{weaklimitoperatortrunc} together with the identification stated in Lemma~\ref{lem:identificationofthelimit} and observe that
$$
\begin{aligned}
&\int_{\Omega_T}|\overline{A}-A(t,x,\nabla_x T_k(u))|\diff x \diff t \le  \liminf_{n\to \infty}\int_{\Omega_T}|A(t,x,\nabla_x u^n)-A(t,x,\nabla_x T_k(u^n))|\diff x \diff t\\
&= \liminf_{n\to \infty}\int_{\Omega_T}|A(t,x,\nabla_x u^n)|\chi_{\{|u^n|>k\}}\diff x \diff t \le  \liminf_{n\to \infty}\|A^n\|_{L^a(\Omega_T)}|\{|u^n|>k\}|^{\frac{a-1}{a}}\\
&\le  \liminf_{n\to \infty}\frac{C}{k^{\frac{a-1}{a}}}.
\end{aligned}
$$
Letting $k\to \infty$ in the inequality above, we deduce
\begin{align}\label{kilop}
\lim_{k\to \infty}\int_{\Omega_T}|\overline{A}-A(t,x,\nabla_x T_k(u))|\diff x \diff t =0.
\end{align}
Obviously, from the relation above, we see that
\begin{equation}
\label{identp}
\overline{A}=A(t,x,\nabla_x T_k(u)) \quad \textrm{a.e. on the set } \{|u|\le k\}.
\end{equation}
Thus in case $p_{\min}>\frac{2d+1}{d+1}$, we know that $\nabla_x u\in L^a(\Omega_T)$ and \eqref{identp} leads to
$$
\overline{A}=A(t,x,\nabla_x u) \quad \textrm{a.e. in } \Omega_T,
$$
which finishes the proof of the existence of a weak solution as in Definition \ref{def:weaksolution}. In addition, the identification \eqref{identp} allows us to define the notion of a weak solution for $p_{\min}\in (2d/(d+1), (2d+1)/(d+1)]$ in the sense of Definition~\ref{wierd}.

\subsection{Existence of an entropy solution}
To show the existence of an entropy solution, we still need to verify \eqref{ineq:inequalityinentropydefinition}. To see it, notice that following the proof of \cite[ Lemma 4.2]{bulicek2021parabolic} we can see that for~$u^n$ found in Proposition~\ref{app:propositionofexistence}, the identity
\begin{equation}
\begin{split}
    &\int_{\Omega}G_k(u^n(t, x) - \phi(t, x)) - G_k(u^n_0(x) - \phi(0, x))\diff x + \int_0^t\int_{\Omega}T_k(u^n - \phi)\,\p_t\phi(t, x)\diff x\diff \tau\\
    &\phantom{=} + \int_0^t\int_{\Omega}A(t, x, \nabla_x u^n)\cdot\nabla_x T_k(u^n - \phi)\diff x\diff \tau\\
    &= \int_0^t\int_{\Omega}f^n\,T_k(u^n - \phi)\diff x\diff \tau,
\end{split}
\end{equation}
holds true for any $k\in \R_+$ and $\phi\in C^\infty_0((-\infty, T)\times\Omega)$. We need to pass to the limit in all terms to deduce~\eqref{ineq:inequalityinentropydefinition}.

First, thanks to Lemma~\ref{app:almosteverywhereconvergence}, we have
\begin{align}\label{app:strongconvergencetruncationwithphi}
    T_k(u^n - \phi) \rightarrow T_k(u - \phi)\quad \text{strongly in }L^1(\Omega_T).
\end{align}
Consequently,
$$
\lim_{n\to \infty} \int_0^t\int_{\Omega}T_k(u^n - \phi)\,\p_t\phi(t, x)\diff x\diff \tau = \int_0^t\int_{\Omega}T_k(u - \phi)\,\p_t\phi(t, x)\diff x\diff \tau.
$$
Similarly, having \eqref{app:strongconvergenceoff}, \eqref{app:strongconvergenceofboundarydata} and using also \eqref{app:strongconvergencetruncationwithphi} and the boundedness of $T_k$ and the definition of $G_k$ we get
$$
\begin{aligned}
    \lim_{n\to \infty} \int_{\Omega} G_k(u^n_0(x) - \phi(0, x))\diff x &=\int_{\Omega}G_k(u_0(x) - \phi(0, x))\diff x \\
    \lim_{n\to \infty} \int_0^t\int_{\Omega}f^n\,T_k(u^n - \phi)\diff x\diff \tau&=\int_0^t\int_{\Omega}f\,T_k(u - \phi)\diff x\diff \tau.
\end{aligned}
$$
In addition, using the fact that $G_k$ is bounded from below and also the pointwise convergence of~$u^n$, see \eqref{pointA}, we may use the Fatou lemma to conclude
$$
\liminf_{n\to \infty} \int_{\Omega}G_k(u^n(t, x) - \phi(t, x))\diff x \ge \int_{\Omega}G_k(u(t, x) - \phi(t, x)) \diff x
$$
for almost all $t\in (0,T)$.

Hence, it remains to show that
\begin{align}\label{existence_of_entropy_sol_inequality}
    \liminf_{n\to\infty}\int_0^t\int_{\Omega}A(t, x, \nabla_x u^n)\cdot\nabla_x T_k(u^n - \phi)\diff x\diff \tau\geq \int_0^t\int_{\Omega}A(t, x, \nabla_x u)\cdot\nabla_x T_k(u - \phi)\diff x\diff \tau.
\end{align}
The rest of this section is devoted to the proof of~\eqref{existence_of_entropy_sol_inequality}. In fact, we prove \eqref{existence_of_entropy_sol_inequality} not only for smooth functions $\phi$ but also for functions $\phi$ satisfying $\phi\in L^{\infty}(\Omega_T)\cap L^1(0,T; W^{1,1}_0(\Omega))$ fulfilling in addition $\nabla_x \phi \in L^{p(t,x)}(\Omega_T;\mathbb{R}^d)$.

Due to the Egoroff theorem and the convergence result \eqref{app:strongconvergencetruncationwithphi}, we know that  for any $\sigma > 0$, there exists a measurable set $E_{\sigma}\subset \Omega_T$, such that $|\Omega_T \setminus E_{\sigma}| < \sigma$ and
\begin{align}\label{app:uniformconvergencewithphi}
    T_k(u^n - \phi) \rightarrow T_k(u - \phi)\quad\text{ uniformly in }E_{\sigma}.
\end{align}
Next, after we denote
$$
E_{\delta} := \{(t, x)\in \Omega_T \quad :\quad |u(t, x) - \phi(t, x)| < k - \delta\},
$$
we can conclude with the use of \eqref{app:uniformconvergencewithphi}, that there exists $n_0$, such that for all $n\geq n_0$
$$
T_k(u^n - \phi) = u^n - \phi
$$
holds in $E_{\sigma}\cap E_{\delta}$. Hence, defining $M:= \|\phi\|_{\infty}$ and using \ref{intro:assumA_mono}, \ref{intro:assumA_vanish} we obtain (recall that $\chi$ denotes the characteristic function of the corresponding set, and also recall that $E_j$ were constructed in Lemma~\ref{lem:monotonicitytrickneededinequality})
\begin{equation}\label{dosad}
\begin{aligned}
    \int_0^t\int_{\Omega}&A(\tau, x, \nabla_x u^n)\cdot\nabla_x T_k(u^n - \phi)\diff x\diff \tau =\int_0^t\int_{\Omega}A(\tau, x, \nabla_x \phi)\cdot\nabla_x T_k(u^n - \phi)\diff x\diff \tau\\
    &+\int_0^t\int_{\Omega}\left(A(\tau, x, \nabla_x u^n)-A(\tau, x, \nabla_x \phi)\right)\cdot\nabla_x T_k(u^n - \phi)\diff x\diff \tau\\
    &\geq \int_0^t\int_{\Omega}\chi_{(\Omega_T\setminus E_j)\cap E_{\sigma}\cap E_{\delta}}\left(A(\tau, x, \nabla_x u^n)-A(\tau, x, \nabla_x \phi)\right)\cdot\nabla_x T_k(u^n - \phi)\diff x\diff \tau\\
    &\qquad +\int_0^t\int_{\Omega}A(\tau, x, \nabla_x \phi)\cdot\nabla_x T_k(u^n - \phi)\diff x\diff \tau\\
    &\overset{n>n_0}= \int_0^t\int_{\Omega}\chi_{(\Omega_T\setminus E_j)\cap E_{\sigma}\cap E_{\delta}}\left(A(\tau, x, \nabla_x T_{M+k}(u^n))-A(\tau, x, \nabla_x T_M(\phi))\right)\cdot \\
    &\qquad {} \qquad {}\qquad {}\qquad {} \cdot\nabla_x (T_{M+k}(u^n) - T_M(\phi))\diff x\diff \tau\\
    &\qquad +\int_0^t\int_{\Omega}A(\tau, x, \nabla_x \phi)\cdot\nabla_x T_k(u^n - \phi)\diff x\diff \tau.
\end{aligned}
\end{equation}
It is rather standard to identify the limit in the last term on the right-hand side and with the help of the growth assumption~\ref{intro:ass_on_A:coercgr}, the convergence results~\eqref{weaklimittrunaction} and \eqref{pointA}, we see that
\begin{equation}\label{easyguy}
\lim_{n\to \infty}\int_0^t\int_{\Omega}A(\tau, x, \nabla_x \phi)\cdot\nabla_x T_k(u^n - \phi)\diff x\diff \tau=\int_0^t\int_{\Omega}A(\tau, x, \nabla_x \phi)\cdot\nabla_x T_k(u - \phi)\diff x\diff \tau.
\end{equation}
For the first term on the right-hand side of~\eqref{dosad}, we use~Lemma~\ref{lem:monotonicitytrickneededinequality} and Lemma~\ref{lem:identificationofthelimit} to get
\begin{equation}\label{hardguy}
\begin{aligned}
\lim_{n\to \infty}&\int_0^t\int_{\Omega}\chi_{(\Omega_T\setminus E_j)\cap E_{\sigma}\cap E_{\delta}}\left(A(\tau, x, \nabla_x T_{M+k}(u^n))-A(\tau, x, \nabla_x T_M(\phi))\right)\cdot \\
    &\qquad {} \qquad {}\qquad {}\qquad {} \cdot\nabla_x (T_{M+k}(u^n) - T_M(\phi))\diff x\diff \tau\\
    &=\int_0^t\int_{\Omega}\chi_{(\Omega_T\setminus E_j)\cap E_{\sigma}\cap E_{\delta}}\left(A(\tau, x, \nabla_x T_{M+k}(u))-A(\tau, x, \nabla_x T_M(\phi))\right)\cdot \\
    &\qquad {} \qquad {}\qquad {}\qquad {} \cdot\nabla_x (T_{M+k}(u) - T_M(\phi))\diff x\diff \tau\\
    &=\int_0^t\int_{\Omega}\chi_{(\Omega_T\setminus E_j)\cap E_{\sigma}}\left(A(\tau, x, \nabla_x u)-A(\tau, x, \nabla_x \phi)\right)\cdot \nabla_x T_{k-\delta} (u - \phi)\diff x\diff \tau.
    \end{aligned}
\end{equation}
Consequently, letting $n\to \infty$ in \eqref{dosad} and using \eqref{easyguy}--\eqref{hardguy}, we obtain
\begin{align*}
    \liminf_{n\to\infty}&\int_0^t\int_{\Omega}A(\tau, x, \nabla_x u^n)\cdot\nabla_x T_k(u^n - \phi)\diff x\diff \tau\\
    &\geq \int_0^t\int_{\Omega}\chi_{(\Omega_T\setminus E_j)\cap E_{\sigma}}\left(A(\tau, x, \nabla_x u)-A(\tau, x, \nabla_x \phi)\right)\cdot \nabla_x T_{k-\delta} (u - \phi)\diff x\diff \tau\\
    &\qquad +\int_0^t\int_{\Omega}A(\tau, x, \nabla_x \phi)\cdot\nabla_x T_k(u - \phi)\diff x\diff \tau.
\end{align*}
Since $\phi$ is bounded, we can use the Lebesgue dominated convergence theorem and let $\sigma \to 0_+$, $\delta\to 0_+$ and $j\to\infty$ in the inequality above to get~\eqref{existence_of_entropy_sol_inequality}. This finishes the proof of the existence of an entropy solution.

\section{Uniqueness of an entropy solution}\label{S4}
We proceed here using the method introduced in \cite{MR1436364}, which we must, however, rebuild here, due to the dependence of the exponent $p(t,x)$ on the spatial and the time variable. Let $u^1$ be an arbitrary entropy solution according to the Definition~\ref{def:entropysolution}. We will show that it coincides with a solution $u$, constructed in the previous section, as the limit of the approximation $u^n$. To this end, notice that similarly to $u^n$ in Lemma \ref{lem:extension_by_0_u} one can deduce from \eqref{eq:renormalizedequationforun} that
$$
(T_{m,\varepsilon}(\overline{u^n}))^\kappa \in W^{1, 1}((-T, T)\times\Omega') \quad\text{ for any }\Omega'\Subset \Omega,
$$
where $\overline{u^n}$ is defined as in Lemma \ref{lem:extension_by_0_u}, $T_{m,\varepsilon}$ as in \eqref{def:trunc_smooth}, superscript $\kappa$ is connected to the mollification in the spatial variable (see the beginning of the Section \ref{S3}), and $m\in \mathbb{N}$, $\varepsilon\in (0,1)$ and $\kappa\in (0,1)$ are arbitrary. Next, we consider a smooth non-negative compactly supported $\psi\in C_c^\infty(\Omega)$ fulfilling $0\leq \psi \leq 1$. In fact, we require that $\psi$ is supported in $\Omega_{\kappa}$, where
$$
\Omega_{\kappa}:=\{x\in \Omega; \; B_{2\kappa}(x)\subset \Omega\}.
$$
Then, we use \eqref{ineq:inequalityinentropydefinition} for $u^1$ with the setting $\phi:=\mathcal{R}^\alpha((T_{m,\varepsilon}(\overline{u^n}))^\kappa)\psi$ (here, recall that $\mathcal{R}^\alpha$ is connected with mollification in the time variable - see the beginning of the Section \ref{S3}). The resulting  inequality reads as
\begin{equation}\label{ineq:inequalityinentropydefinition2}
\begin{split}
    &\int_{\Omega}G_k(u^1(t) - \mathcal{R}^\alpha((T_{m,\varepsilon}(\overline{u^n}))^\kappa)(t)\psi) - G_k(u_0(x) - \mathcal{R}^\alpha((T_{m,\varepsilon}(\overline{u^n}))^\kappa)(0)\psi)\diff x \\
    &\quad + \int_0^t\int_{\Omega}T_k(u^1 - \mathcal{R}^\alpha((T_{m,\varepsilon}(\overline{u^n}))^\kappa)\psi)\,\p_t\mathcal{R}^\alpha((T_{m,\varepsilon}(\overline{u^n}))^\kappa)\psi\diff x\diff \tau\\
    &\phantom{=} + \int_0^t\int_{\Omega}A(t, x, \nabla_x u^1)\cdot\nabla_x T_k(u^1 - \mathcal{R}^\alpha((T_{m,\varepsilon}(\overline{u^n}))^\kappa)\psi)\diff x\diff \tau \\
    &\leq \int_0^t\int_{\Omega}f\,T_k(u^1 - \mathcal{R}^\alpha((T_{m,\varepsilon}(\overline{u^n}))^\kappa)\psi)\diff x\diff \tau.
\end{split}
\end{equation}
Subsequently, we use the renormalized identity ~\eqref{eq:renormalizedequationforun} (with $k$ replaced by $m$), multiply it by $-(\mathcal{R}^\alpha(T_k(u^1 - \mathcal{R}^\alpha((T_{m,\varepsilon}(\overline{u^n}))^\kappa)\psi))\psi)^\kappa$ and integrate over $\Omega$ and $(0,t)$ to get, after integration by parts,
\begin{align}
\begin{aligned}\label{eq:renormalizedequationforu2}
-&\int_0^t \int_{\Omega}\p_t T_{m,\varepsilon}(u^n)(\mathcal{R}^\alpha(T_k(u^1 - \mathcal{R}^\alpha((T_{m,\varepsilon}(\overline{u^n}))^\kappa)\psi))\psi)^\kappa \diff x \diff \tau \\
&= \int_0^t \int_{\Omega} A(\tau, x, \nabla_x u^n)T'_{m,\varepsilon}(u^n)\cdot \nabla_x(\mathcal{R}^\alpha(T_k(u^1 - \mathcal{R}^\alpha((T_{m,\varepsilon}(\overline{u^n}))^\kappa)\psi))\psi)^\kappa  \diff x \diff \tau\\
&\quad - \int_0^t \int_{\Omega}f^n\, T'_{m,\varepsilon}(u^n)(\mathcal{R}^\alpha(T_k(u^1 - \mathcal{R}^\alpha((T_{m,\varepsilon}(\overline{u^n}))^\kappa)\psi))\psi)^\kappa \diff x \diff \tau\\
&\quad + \int_0^t \int_{\Omega} A(\tau, x,\nabla_x u^n)\cdot\nabla_x(T'_{m,\varepsilon}(u^n))(\mathcal{R}^\alpha(T_k(u^1 - \mathcal{R}^\alpha((T_{m,\varepsilon}(\overline{u^n}))^\kappa)\psi))\psi)^\kappa.
\end{aligned}
\end{align}
Using the definition of $\mathcal{R}^{\alpha}$ and also using the fact that the superscript $\kappa$ denotes the standard mollification by convolution, we see that the second term on the left-hand side of \eqref{ineq:inequalityinentropydefinition2} and the term on the left-hand side of \eqref{eq:renormalizedequationforu2} are equal up to the sign. Thus, summing \eqref{ineq:inequalityinentropydefinition2}  and \eqref{eq:renormalizedequationforu2} we get the starting inequality (we have also used the definition of $\overline{u^n}$)
\begin{equation}\label{proof:uniquenessfirstinequality}
    \begin{split}
        &\int_0^t\int_{\Omega} A(\tau, x, \nabla_x u^n)\cdot\nabla_x T_k(u^1 - \mathcal{R}^\alpha((T_{m,\varepsilon}(\overline{u^n}))^\kappa)\psi)\diff x\diff\tau\\
        &\phantom{=} - \int_0^t\int_{\Omega}A(\tau, x, \nabla_x u^n)T'_{m,\varepsilon}(u^n)\cdot \nabla_x (\mathcal{R}^\alpha(T_k(u^1 - \mathcal{R}^\alpha((T_{m,\varepsilon}(\overline{u^n}))^\kappa)\psi))\psi)^\kappa\diff x\diff \tau\\
        &\phantom{=} \int_{\Omega}G_k(u^1(t, x) - \mathcal{R}^\alpha((T_{m,\varepsilon}(\overline{u^n}(t, x)))^\kappa)\psi(x))\diff x\\
        &\leq \int_0^t\int_{\Omega}f\,T_k(u^1 - \mathcal{R}^\alpha((T_{m,\varepsilon}(\overline{u^n}))^\kappa)\psi)\diff x\diff\tau\\
        &\phantom{=}-\int_0^t\int_{\Omega}f^n\,T'_{m,\varepsilon}(u^n)(\mathcal{R}^\alpha(T_k(u^1 - \mathcal{R}^\alpha((T_{m,\varepsilon}(\overline{u^n}))^\kappa)\psi))\psi)^\kappa \diff x\diff \tau\\
        &\phantom{=}+\int_0^t\int_{\Omega}A(\tau, x,\nabla_x u^n)\cdot\nabla_x (T'_{m,\varepsilon}(u^n))(\mathcal{R}^\alpha(T_k(u^1 - \mathcal{R}^\alpha((T_{m,\varepsilon}(\overline{u^n}))^\kappa)\psi))\psi)^\kappa\diff x\diff \tau\\
        &\phantom{=}+\int_{\Omega}G_k(u_0(x) - \mathcal{R}^\alpha((T_{m,\varepsilon}(\overline{u^n}(0,x)))^\kappa)\psi(x))\diff x.
    \end{split}
\end{equation}
Our aim is to converge with $\alpha \to 0_+$, $\kappa\to 0_+$, $\psi\to 1$, $\varepsilon\to 0_+$ and $n\to \infty$ in this order, and we want to show that the right-hand side vanishes. Furthermore, due to the monotonicity of $A$ and the definition of $G_k$, all integrals on the left-hand side will be non-negative, which will, in the end, provide the uniqueness.
\subsection{Step 1: convergence with \texorpdfstring{$\alpha \to 0_+$}{a}} Here, trivially, all the terms in the inequality \eqref{proof:uniquenessfirstinequality} can be treated by the Lebesgue dominated convergence theorem. Indeed, since in all the terms there are mollifications with respect to the spatial variable denoted by $\kappa$ and we know that $u^n$ and $u^1$ belong to the $L^{\infty}((0,T); L^1(\Omega))$, the use of the Lebesgue theorem becomes easy. Hence, we skip the details and get
\begin{equation}\label{proof:uniquenesssecondinequality}
    \begin{split}
        &\int_0^t\int_{\Omega} A(\tau, x, \nabla_x u^1)\cdot\nabla_x T_k(u^1 - (T_{m,\varepsilon}(u^n))^\kappa\psi)\diff x\diff\tau\\
        &\phantom{=} - \int_0^t\int_{\Omega}A(\tau, x, \nabla_x u^n)T'_{m,\varepsilon}(u^n)\cdot \nabla_x (T_k(u^1 - (T_{m,\varepsilon}(u^n))^\kappa\psi)\psi)^\kappa\diff x\diff \tau\\
        &\phantom{=}+\int_{\Omega}G_k(u^1(t, x) - (T_{m,\varepsilon}(u^n(t, x)))^\kappa\psi(x))\diff x\\
        &\leq \int_0^t\int_{\Omega}f\,T_k(u^1 - (T_{m,\varepsilon}(u^n))^\kappa\psi)\diff x\diff\tau\\
        &\phantom{=}-\int_0^t\int_{\Omega}f^n\,T'_{m,\varepsilon}(u^n)(T_k(u^1 - (T_{m,\varepsilon}(u^n))^\kappa\psi)\psi)^\kappa \diff x\diff \tau\\
        &\phantom{=}+\int_0^t\int_{\Omega}A(\tau, x,\nabla_x u^n)\cdot\nabla_x (T'_{m,\varepsilon}(u^n))(T_k(u^1 - (T_{m,\varepsilon}(u^n))^\kappa\psi)\psi)^\kappa\diff x\diff \tau\\
        &\phantom{=}+\int_{\Omega}G_k(u_0(x) - (T_{m,\varepsilon}(u_0^n(x)))^\kappa\psi(x))\diff x.
    \end{split}
\end{equation}
\subsection{Step 2: convergence with \texorpdfstring{$\kappa\to 0_+$}{k}}
The right-hand side of the inequality \eqref{proof:uniquenesssecondinequality} might be again treated easily with the Lebesgue dominated convergence theorem by using the fact that $T_k$~is bounded. In a similar manner, we can pass to the limit in the third term on the left-hand side for almost all $t\in (0,T)$. Thus, we focus on the first two terms on the left-hand side of \eqref{proof:uniquenesssecondinequality}. For the first term we can write
\begin{align*}
    \int_0^t\int_{\Omega} A(\tau, x, \nabla_x u^1)&\cdot\nabla_x T_k(u^1 - (T_{m,\varepsilon}(u^n))^\kappa\psi)\diff x\diff\tau = \\
    &=\int_0^t\int_{\Omega} A(\tau, x, \nabla_x T_{k+m+\varepsilon}(u^1))\cdot\nabla_x T_k(T_{k+m+\varepsilon}(u^1) - (T_{m,\varepsilon}(u^n))^\kappa\psi)\diff x\diff\tau.
\end{align*}
Since $u^1$ is an entropy solution, we know that
$$
A(\tau, x, \nabla_x T_{k+m+\varepsilon}(u^1))\in L^{p'(t,x)}(\Omega_T; \R^d).
$$
Furthermore, because $\psi$ is compactly supported, we may apply  Proposition~\ref{prop:convergence_of_convolution_in_musielak_orlicz} to obtain
$$
\nabla_x (T_{m,\varepsilon}(u^n))^\kappa\psi \to \nabla_x (T_{m,\varepsilon}(u^n))\psi \textrm{ modularly in }  L^{p'(t,x)}(\Omega_T;\R^d).
$$
Therefore, we can now use Theorem~\ref{thm:modular_l1} to deduce
%
\begin{equation}
\begin{split}\label{proof:uniqueness_kappa_1}
    \lim_{\kappa\to 0_+}\int_0^t\int_{\Omega} A(\tau, x, \nabla_x u^1)\cdot\nabla_x T_k(u^1 - (T_{m,\varepsilon}(u^n))^\kappa\psi)\diff x\diff\tau \\
    = \int_0^t\int_{\Omega} A(\tau, x, \nabla_x u^1)\cdot\nabla_x T_k(u^1 - T_{m,\varepsilon}(u^n)\psi)\diff x\diff\tau.
\end{split}
\end{equation}
The second term on the left-hand side of \eqref{proof:uniquenesssecondinequality} can be rewritten as
$$
\begin{aligned}
\int_0^t\int_{\Omega}&A(\tau, x, \nabla_x u^n)T'_{m,\varepsilon}(u^n)\cdot \nabla_x (T_k(u^1 - (T_{m,\varepsilon}(u^n))^\kappa\psi)\psi)^\kappa\diff x\diff \tau=\\
&=\int_0^t\int_{\Omega}\left(A(\tau, x, \nabla_x T_{m+\varepsilon}(u^n))T'_{m,\varepsilon}(u^n)\right)^{\kappa}\cdot \nabla_x T_k(T_{m+\varepsilon+k}(u^1) - (T_{m,\varepsilon}(u^n))^\kappa\psi)\psi\diff x\diff \tau.
\end{aligned}
$$
Thus, we can argue very similarly as in the previous term and with the help of~Proposition~\ref{prop:convergence_of_convolution_in_musielak_orlicz} and Theorem~\ref{thm:modular_l1} we can conclude
%
\begin{equation}
\begin{split}\label{proof:uniqueness_kappa_2}
\lim_{\kappa\to 0_+}\int_0^t\int_{\Omega}A(\tau, x, \nabla_x u^n)T'_{m,\varepsilon}(u^n)\cdot \nabla_x (T_k(u^1 - (T_{m,\varepsilon}(u^n))^\kappa\psi)\psi)^\kappa\diff x\diff \tau =\\
= \int_0^t\int_{\Omega}A(\tau, x, \nabla_x u^n)T'_{m,\varepsilon}(u^n)\cdot \nabla_x (T_k(u^1 - T_{m,\varepsilon}(u^n)\psi)\psi)\diff x\diff \tau.
\end{split}
\end{equation}
Combining both \eqref{proof:uniqueness_kappa_1} and \eqref{proof:uniqueness_kappa_2} we obtain from \eqref{proof:uniquenesssecondinequality}
\begin{equation}\label{proof:uniquenessthirdinequality}
    \begin{split}
        &\int_0^t\int_{\Omega} A(\tau, x, \nabla_x u^1)\cdot\nabla_x T_k(u^1 - T_{m,\varepsilon}(u^n)\psi)\diff x\diff\tau\\
        &\phantom{=} - \int_0^t\int_{\Omega}A(\tau, x, \nabla_x u^n)T'_{m,\varepsilon}(u^n)\cdot \nabla_x (T_k(u^1 - T_{m,\varepsilon}(u^n)\psi)\psi)\diff x\diff \tau\\
        &\phantom{=}+\int_{\Omega}G_k(u^1(t, x) - T_{m,\varepsilon}(u^n(t, x))\psi)\diff x\\
        &\leq \int_0^t\int_{\Omega}f\,T_k(u^1 - T_{m,\varepsilon}(u^n)\psi)\diff x\diff\tau\\
        &\phantom{=}-\int_0^t\int_{\Omega}f^n\,T'_{m,\varepsilon}(u^n)\,T_k(u^1 - T_{m,\varepsilon}(u^n)\psi)\,\psi \diff x\diff \tau\\
        &\phantom{=}+\int_0^t\int_{\Omega}A(\tau, x,\nabla_x u^n)\cdot\nabla_x (T'_{m,\varepsilon}(u^n))(T_k(u^1 - T_{m,\varepsilon}(u^n)\psi)\psi)\diff x\diff \tau\\
        &\phantom{=}+\int_{\Omega}G_k(u_0(x) - T_{m,\varepsilon}(u_0^n(x))\psi)\diff x.
    \end{split}
\end{equation}
\subsection{Step 3: convergence with \texorpdfstring{ $\psi \nearrow 1$}{p}} Our goal is to pass to the limit in \eqref{proof:uniquenessthirdinequality} with $\psi \nearrow 1$. To do so, we consider a special sequence $\{\psi_j\}_{j\in \mathbb{N}}$ found in Lemma~\ref{proof:uniqueness:crucial_lemma}, for which we know that $\nabla_x \psi_j$ is supported in $\Omega\setminus\Omega_j$, which satisfies $|\Omega\setminus\Omega_j|\to 0$ as $j\to \infty$. Since $\psi_j$ converges to $1$ as $j\to\infty$, it is easy to see, that all of the terms in \eqref{proof:uniquenessthirdinequality} that do not contain $\nabla_x\psi_j$ converge to proper limits using the Lebesgue dominated convergence theorem. Hence, let us focus our attention on the terms that do contain $\nabla_x \psi_j$. There are three such terms. The first one is of the form
\begin{align*}
    \int_0^t\int_{\Omega} (A(\tau, x, \nabla_x u^1)\cdot\nabla_x \psi_j) \,T'_k(u^1 - T_{m,\varepsilon}(u^n)\psi_j)T_{m,\varepsilon}(u^n)\diff x\diff\tau.
\end{align*}
Due to the presence of $\nabla_x \psi_j$ we know we can integrate only over $(0,t)\times (\Omega \setminus \Omega_j)$. In addition, it follows from the definition of $T_k$ that $|T_k'|\le 1$ and furthermore,  since $|\psi_j|\le 1$ we see that $T'_k(u^1 - T_{m,\varepsilon}(u^n)\psi_j)=0$ on the set, where $|u^1|>k+m+\varepsilon$. Therefore, we have the following estimate
\begin{align*}
    &\left|\int_0^t\int_{\Omega} (A(\tau, x, \nabla_x u^1)\cdot\nabla_x \psi_j) \,T'_k(u^1 - T_{m,\varepsilon}(u^n)\psi_j)T_{m,\varepsilon}(u^n)\diff x\diff\tau \right|\\
    &\;\leq \int_0^T\int_{\Omega\setminus\Omega_j}|A(t, x, \nabla_x T_{k+m+\varepsilon}(u^1))||\nabla_x\psi\, T_{m,\varepsilon}(u^n)| \diff x\diff t\\
   &\;\leq \int_0^T\int_{\Omega\setminus\Omega_j}|A(t, x, \nabla_x T_{k+m+\varepsilon}(u^1))|^{p'(t,x)}\diff x \diff t+\int_{\Omega_T}|\nabla_x\psi\, T_{m,\varepsilon}(u^n)|^{p(t,x)} \diff x\diff t,
\end{align*}
where we also used the Young inequality in the second estimate. Since $u^1$ is an entropy solution, we know that $|A(t, x, \nabla_x T_{k+m+\varepsilon}(u^1))|^{p'(t,x)}\in L^1(\Omega_T)$ for any $k,m,\varepsilon$. Further, $|\Omega\setminus\Omega_j|\to 0$ as $j\to \infty$, and therefore we see that the first integral tends to $0$ as $j\to\infty$. For the second integral, we use Lemma~\ref{proof:uniqueness:crucial_lemma} to see, that it vanishes in the limit as well. Hence,
\begin{align}\label{proof:uniqueness_psi_1}
    \lim_{j\to\infty}\int_0^t\int_{\Omega} (A(\tau, x, \nabla_x u^1)\cdot\nabla_x \psi_j) \,T'_k(u^1 - T_{m,\varepsilon}(u^n)\psi_j)T_{m,\varepsilon}(u^n)\diff x\diff\tau = 0.
\end{align}
In a similar manner, one may see that
\begin{align}
    \lim_{j\to\infty}\int_0^t\int_{\Omega}(A(\tau, x, \nabla_x T_{m,\varepsilon}(u^n))\cdot \nabla_x\psi_j) \,T_k(u^1 - T_{m,\varepsilon}(u^n)\psi_j)\diff x\diff \tau &= 0\label{proof:uniqueness_psi_2},\\
    \lim_{j\to\infty}\int_0^t\int_{\Omega}(A(\tau, x, \nabla_x T_{m,\varepsilon}(u^n))\cdot \nabla_x\psi_j)\, T'_k(u^1 - T_{m,\varepsilon}(u^n)\psi_j)\,T_{m,\varepsilon}(u^n)\,\psi_j\diff x\diff \tau &= 0 \label{proof:uniqueness_psi_3}.
\end{align}
Combining \eqref{proof:uniqueness_psi_1}, \eqref{proof:uniqueness_psi_2} and \eqref{proof:uniqueness_psi_3} we deduce from \eqref{proof:uniquenessthirdinequality}
\begin{equation}\label{proof:uniquenessfourthinequality}
    \begin{split}
        &\int_0^t\int_{\Omega} (A(\tau, x, \nabla_x u^1) - A(\tau, x, \nabla_x u^n)T'_{m,\varepsilon}(u^n))\cdot\nabla_x T_k(u^1 - T_{m,\varepsilon}(u^n))\diff x\diff \tau\\
        &\phantom{=} \int_{\Omega}G_k(u^1(t, x) - T_{m,\varepsilon}(u^n(t, x)))\diff x\\
        &\leq \int_0^t\int_{\Omega}(f - f^n\,T'_{m,\varepsilon}(u^n))\,T_k(u^1 - T_{m,\varepsilon}(u^n))\diff x\diff \tau+\int_{\Omega}G_k(u_0(x) - T_{m,\varepsilon}(u_0^n(x)))\diff x\\
        &\phantom{=}+\int_0^t\int_{\Omega}A(\tau, x,\nabla_x u^n)\cdot \nabla_x u^n \, T''_{m,\varepsilon}(u^n)(T_k(u^1 - T_{m,\varepsilon}(u^n)))\diff x\diff \tau.
    \end{split}
\end{equation}
\subsection{Step 4: convergence with \texorpdfstring{$\varepsilon\to 0_+$}{e}} Here, all the terms without the second derivative of $T_{m, \varepsilon}(u^n)$ can be dealt with using the Lebesgue dominated convergence theorem, thus we only focus on the term that does contain it. By the H\"{o}lder inequality, we have
\begin{equation}\label{proof:uniqueness_bound_on_second_derivative_1}
\begin{split}
    \int_0^t\int_{\Omega}A(\tau, x,\nabla_x u^n)\cdot\nabla_x u^n \, T''_{m,\varepsilon}(u^n)&(T_k(u^1 - T_{m,\varepsilon}(u^n)))\diff x\diff \tau\\
    &\leq k\int_0^t\int_{\Omega}(A(\tau, x,\nabla_x u^n)\cdot\nabla_x u^n)|T''_{m,\varepsilon}(u^n)|\diff x\diff \tau\\
    &= -k\int_0^t\int_{\Omega}(A(\tau, x,\nabla_x u^n)\cdot\nabla_x u^n)\,T''_{m,\varepsilon}(|u^n|)\diff x\diff \tau,
\end{split}
\end{equation}
where we used the monotonicity of $A$, i.e., \ref{intro:assumA_mono}--\ref{intro:assumA_vanish}, and the properties of $T_{k,\varepsilon}$. To estimate the right-hand side, we will mimic the computation in \eqref{abc}--\eqref{boundsthirdterm}, but instead of proving just boundedness (similarly as in \eqref{boundsthirdterm}), we need to proceed slightly differently and show that it is reasonably small.  Here, the key assumption that allow us to prove it is the $L^1$ integrability of data, i.e., \eqref{app:strongconvergenceoff}--\eqref{app:strongconvergenceofboundarydata}.

Since we did all rigorous step already in previous section (see the computation  \eqref{abc}--\eqref{boundsthirdterm}), we proceed here more formally. We set
$$
\phi = \mathcal{R}^\alpha(((1 - T'_{m,\varepsilon}((\overline{u^n})_+^\kappa))\psi)^\kappa\gamma^s_{-\eta, t}),
$$
in \eqref{eq:weak_equality_extensions}, where $u^n_+ = \max\{u^n, 0\}$. After converging with $\alpha\to 0_+$ and $s\to 0$ with the use of the Lebesgue dominated convergence theorem, one obtains
\begin{equation*}
    \begin{split}
        &-\int_{0}^t\int_{\Omega}(A(\tau, x, \nabla_x u^n)^\kappa\cdot\nabla_x{(u^n)^\kappa})T''_{m, \varepsilon}((u^n)^\kappa_+)\psi\diff x\diff\tau\\
        &\phantom{=}+ \int_0^t\int_{\Omega}(A(\tau, x, \nabla_x u^n)^\kappa \cdot\nabla_x \psi)(1 - T'_{m,\varepsilon}((u^n)_+^\kappa)\diff x\diff\tau\\
        &= -\int_{\Omega}((u^n)^\kappa_+ (t, x) - T_{m,\varepsilon}((u^n(t, x))^\kappa_+))\psi - ((u^n_0(x))^\kappa_+ - T_{m,\varepsilon}((u^n_0(x))^\kappa_+))\psi\diff x\\
        &\phantom{=}+ \int_0^t\int_{\Omega}f^n((1 - T'_{m,\varepsilon}((u^n)_+^\kappa))\psi)^\kappa\diff x\diff \tau.
    \end{split}
\end{equation*}
Here, for the left-hand side we may converge similarly as in \eqref{app:firstconvergencewithkappathirdterm} and \eqref{app:convergenceinkappathirdtermsecondone} and on the right-hand side, we simply use the almost everywhere convergence of mollification as well as the Lebesgue dominated convergence theorem to see
\begin{equation}\label{plus}
    \begin{split}
        &-\int_{0}^t\int_{\Omega}(A(\tau, x, \nabla_x u^n)\cdot\nabla_x u^n)T''_{m, \varepsilon}(u^n_+)\psi\diff x\diff\tau\\
        &\phantom{=}+ \int_0^t\int_{\Omega}(A(\tau, x, \nabla_x u^n)\cdot\nabla_x \psi)(1 - T'_{m,\varepsilon}(u^n_+))\diff x\diff\tau\\
        &= -\int_{\Omega}(u^n_+ (t, x) - T_{m,\varepsilon}(u^n_+(t, x)))\psi - ((u^n_0(x))_+ - T_{m,\varepsilon}((u^n_0(x))_+))\psi\diff x\\
        &\phantom{=}+ \int_0^t\int_{\Omega}f^n((1 - T'_{m,\varepsilon}(u^n_+))\psi)\diff x\diff \tau.
    \end{split}
\end{equation}
We can perform a very similar computation, but with $u^n_+$ replaced with $u^n_{-}:=\min\{0,u^n\}$, to get
\begin{equation}\label{minus}
    \begin{split}
        &-\int_{0}^t\int_{\Omega}(A(\tau, x, \nabla_x u^n)\cdot\nabla_x u^n)T''_{m, \varepsilon}(u^n_-)\psi\diff x\diff\tau\\
        &\phantom{=}+ \int_0^t\int_{\Omega}(A(\tau, x, \nabla_x u^n)\cdot\nabla_x \psi)(1 - T'_{m,\varepsilon}(u^n_-))\diff x\diff\tau\\
        &= -\int_{\Omega}(u^n_- (t, x) - T_{m,\varepsilon}(u^n_-(t, x)))\psi - ((u^n_0(x))_- - T_{m,\varepsilon}((u^n_0(x))_-))\psi\diff x\\
        &\phantom{=}+ \int_0^t\int_{\Omega}f^n((1 - T'_{m,\varepsilon}(u^n_-))\psi)\diff x\diff \tau.
    \end{split}
\end{equation}
Subtracting \eqref{minus} from \eqref{plus}, we get
\begin{equation}\label{rovna}
    \begin{split}
        &-\int_{0}^t\int_{\Omega}(A(\tau, x, \nabla_x u^n)\cdot\nabla_x u^n)T''_{m, \varepsilon}(|u^n|)\psi\diff x\diff\tau\\
        &= -\int_{\Omega}(|u^n (t, x)| - T_{m,\varepsilon}(|u^n(t, x)|))\psi - (|u^n_0(x)| - T_{m,\varepsilon}(|u^n_0(x)|))\psi\diff x\\
        &\phantom{=}- \int_0^t\int_{\Omega}(A(\tau, x, \nabla_x u^n)\cdot\nabla_x \psi)(T'_{m,\varepsilon}(u^n_-) - T'_{m,\varepsilon}(u^n_+))\diff x\diff\tau\\
        &\phantom{=}+ \int_0^t\int_{\Omega}f^n((T'_{m,\varepsilon}(u^n_-) - T'_{m,\varepsilon}(u^n_+))\psi)\diff x\diff \tau.
    \end{split}
\end{equation}
%
After that, we converge with $\psi \to 1$. As the proof is similar to the one in the previous step for \eqref{proof:uniqueness_psi_1}, we skip the details here and we get
\begin{equation}\label{proof:uniqueness_bound_on_second_derivative}
    \begin{split}
        &-\int_{0}^t\int_{\Omega}(A(\tau, x, \nabla_x u^n)\cdot\nabla_x u^n)T''_{m, \varepsilon}(|u^n|)\diff x\diff\tau\\
        &\le -\int_{\Omega}(|u^n (t, x)| - T_{m,\varepsilon}(|u^n(t, x)|)) - (|u^n_0(x)| - T_{m,\varepsilon}(|u^n_0(x)|))\diff x\\
        &\phantom{=}+ \int_0^t\int_{\Omega}f^n(T'_{m,\varepsilon}(u^n_-) - T'_{m,\varepsilon}(u^n_+)))\diff x\diff \tau\\
        &\le \int_{\Omega}|u^n_0(x)|\chi_{\{|u^n_0|>m\}}\diff x +\int_{\Omega_T}|f^n|\chi_{\{|u^n|>m\}}\diff x\diff \tau.
    \end{split}
\end{equation}

Using  \eqref{proof:uniqueness_bound_on_second_derivative} in \eqref{proof:uniqueness_bound_on_second_derivative_1} and substituting the result into \eqref{proof:uniquenessfourthinequality}, we get
\begin{equation}\label{proof:uniquenessfifthinequality}
    \begin{split}
        &\int_0^t\int_{\Omega} (A(\tau, x, \nabla_x u^1) - A(\tau, x, \nabla_x u^n)T'_{m,\varepsilon}(u^n))\cdot\nabla_x T_k(u^1 - T_{m,\varepsilon}(u^n))\diff x\diff \tau\\
        &\phantom{=}+\int_{\Omega}G_k(u^1(t, x) - T_{m,\varepsilon}(u^n(t, x)))\diff x\\
        &\leq \int_0^t\int_{\Omega}(f - f^n\,T'_{m,\varepsilon}(u^n))\,T_k(u^1 - T_{m,\varepsilon}(u^n))\diff x\diff \tau\\
        &\phantom{=}+k \left(  \int_{\Omega}|u^n_0(x)|\chi_{\{|u^n_0|>m\}}\diff x +\int_{\Omega_T}|f^n|\chi_{\{|u^n|>m\}}\diff x\diff \tau \right)\\
        &\phantom{=}+ k\int_0^t\int_{\Omega}f^n(1 - T'_{m,\varepsilon}(u^n))\diff x\diff \tau +\int_{\Omega}G_k(u_0(x) - T_{m,\varepsilon}(u_0^n(x)))\diff x\\
        &\le C k \left(  \int_{\Omega}|u^n_0(x)|\chi_{\{|u^n_0|>m\}}\diff x +\int_{\Omega_T}|f^n|\chi_{\{|u^n|>m\}}\diff x\diff \tau +\int_{\Omega_T}|f^n-f|\diff x\diff \tau\right)\\
        &\phantom{=}+\int_{\Omega}G_k(u_0(x) - T_{m,\varepsilon}(u_0^n(x)))\diff x,
    \end{split}
\end{equation}
where we have used the triangle inequality and properties of functions $T_k$, and $T_{k,\varepsilon}$ respectively. 
At this point, it is rather standard to let $\varepsilon \to 0_+$ and also $t\to T_{-}$, and observe
\begin{equation}\label{proof:uniquenessseventhinequality}
    \begin{split}
        &\int_{\Omega_T} (A(\tau, x, \nabla_x u^1) - A(\tau, x, \nabla_x T_{m}(u^n)))\cdot\nabla_x T_k(u^1 - T_{m}(u^n))\diff x\diff \tau\\
        &\phantom{=}+\int_{\Omega}G_k(u^1(t, x) - T_{m}(u^n(t, x)))\diff x\\
        &\le C k \left(  \int_{\Omega}|u^n_0(x)|\chi_{\{|u^n_0|>m\}}\diff x +\int_{\Omega_T}|f^n|\chi_{\{|u^n|>m\}}\diff x\diff \tau +\int_{\Omega_T}|f^n-f|\diff x\diff \tau\right)\\
        &\phantom{=}+\int_{\Omega}G_k(u_0(x) - T_{m}(u_0^n(x)))\diff x.
    \end{split}
\end{equation}

\subsection{Step 5: convergence with \texorpdfstring{$n\to\infty$}{n}} Our goal is to let $n\to \infty$ in \eqref{proof:uniquenessseventhinequality}. To proceed in the first term on the left-hand side, we apply the same procedure as in the proof of the inequality \eqref{existence_of_entropy_sol_inequality}. Indeed, using the definition of $T_k$, we have
\begin{equation*}
    \begin{split}
        &\int_{\Omega_T} (A(\tau, x, \nabla_x u^1) - A(\tau, x, \nabla_x T_{m}(u^n)))\cdot\nabla_x T_k(u^1 - T_{m}(u^n))\diff x\diff \tau\\
        &\phantom{=}=\int_{\Omega_T} (A(\tau, x, \nabla_x T_{k+m}(u^1)) - A(\tau, x, \nabla_x T_{m}(u^n)))\cdot\nabla_x T_k(T_{k+m}(u^1) - T_{m}(u^n))\diff x\diff \tau
    \end{split}
\end{equation*}
and therefore it follows from \eqref{pointA} and \eqref{weaklimittrunaction}, that
\begin{equation*}
    \begin{split}
        &\liminf_{n\to \infty} \int_{\Omega_T} (A(\tau, x, \nabla_x u^1) - A(\tau, x, \nabla_x T_{m}(u^n)))\cdot\nabla_x T_k(u^1 - T_{m}(u^n))\diff x\diff \tau\\
        &\phantom{=}\ge \int_{\Omega_T} (A(\tau, x, \nabla_x u^1) - A(\tau, x, \nabla_x T_{m}(u)))\cdot\nabla_x T_k(u^1 - T_{m}(u))\diff x\diff \tau.
    \end{split}
\end{equation*}
Next, for the second term on the left-hand side we can use the Fatou lemma. Finally, on the remaining terms on the right-hand side, we simply use  \eqref{app:strongconvergenceofboundarydata}, \eqref{app:strongconvergenceoff} and deduce the final inequality
\begin{equation}\label{proof:uniquenesseigthinequality}
    \begin{split}
        &\int_{\Omega_T} (A(\tau, x, \nabla_x u^1) - A(\tau, x, \nabla_x T_{m}(u)))\cdot\nabla_x T_k(u^1 - T_{m}(u))\diff x\diff \tau\\
        &\phantom{=}+\int_{\Omega}G_k(u^1(t, x) - T_{m}(u(t, x)))\diff x\\
        &\le C k \left(  \int_{\Omega}|u_0(x)|\chi_{\{|u_0|\ge m\}}\diff x +\int_{\Omega_T}|f|\chi_{\{|u|\ge m\}}\diff x\diff \tau \right)\\
        &\phantom{=}+\int_{\Omega}G_k(u_0(x) - T_{m}(u_0(x)))\diff x.
    \end{split}
\end{equation}

\subsection{Step 6: conclusion} Once again, we use the Fatou lemma to treat the left-hand side of the inequality and, remembering that $u$ is finite almost everywhere, we can converge with $m\to \infty$ to obtain from \eqref{proof:uniquenesseigthinequality} that for almost all $t\in (0,T)$
\begin{align*}
   \int_{\Omega}G_k(u^1(t, x) - u(t, x))\diff x + \int_{\Omega_T} (A(\tau, x, \nabla_x u^1) - A(\tau, x, \nabla_x u))\cdot\nabla_x T_k(u^1 - u)\diff x\diff \tau \leq 0.
\end{align*}
By the definition of $G_k$ and the monotonicity condition \ref{intro:assumA_mono} we know that
\begin{align*}
   \int_{\Omega}G_k(u^1(t, x) - u(t, x))\diff x + \int_0^t\int_{\Omega} (A(\tau, x, \nabla_x u^1) - A(\tau, x, \nabla_x u))\cdot\nabla_x T_k(u^1 - u)\diff x\diff \tau \geq 0.
\end{align*}
Consequently, letting also $k\to \infty$ we deduce that $u^1=u$ almost everywhere in $\Omega_T$.

\appendix
\section{Musielak-Orlicz spaces \texorpdfstring{$L^{p(t, x)}(\Omega_T)$}{Ls}}\label{app:musielaki}

\noindent Here, we remark some of the basic properties of the variable exponent spaces $L^{p(t, x)}(\Omega_T)$. For  more details about the Musielak--Orlicz spaces, we refer the interested reader to~\cite{chlebicka2019book}. We start with the definition

\begin{Def}\label{def:musielak_exponent_space}
Given a measurable function $p(t, x): \Omega_T \to [1,\infty)$, we let
\begin{align*}
L^{p(t, x)}(\Omega_T) = \left\{ \xi: \Omega_T \to \mathbb{R}^d: \mbox{ there is } \lambda>0 \mbox{ such that } \int_{\Omega_T} \left| \frac{\xi(t,x)}{\lambda}\right|^{p(t, x)} \diff x \diff t < \infty \right\}.
\end{align*}
Moreover, if $p(t, x)$ satisfies the boundedness conditions \ref{ass:usual_bounds_exp} or \ref{ass:usual_bounds_exp_entr} then this definition is equivalent to
$$
L^{p(t, x)}(\Omega_T) = \left\{ \xi: \Omega_T \to \mathbb{R}^d:  \int_{\Omega_T} \left| {\xi(t,x)}\right|^{p(t, x)} \diff x \diff t < \infty \right\}.
$$
\end{Def}

\noindent Variable exponent spaces are Banach with the norm below.

\begin{thm}
Let
\begin{equation}\label{intro:norm}
\| \xi \|_{L^{p(t, x)}} = \inf \left\{\lambda>0: \int_{\Omega_T} \left| \frac{\xi(t,x)}{\lambda}\right|^{p(t, x)} \diff x \diff t \leq 1 \right\}.
\end{equation}
Then, $\left(L^{p(t, x)}, \|\cdot\|_{L^{p(t, x)}}\right)$ is a Banach space.
\end{thm}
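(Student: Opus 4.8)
The plan is to check first that $\|\cdot\|_{L^{p(t,x)}}$ is a norm on the vector space $L^{p(t,x)}(\Omega_T)$, and then to prove completeness; throughout we abbreviate $\rho(\xi):=\int_{\Omega_T}|\xi(t,x)|^{p(t,x)}\diff x\diff t\in[0,+\infty]$. The pivotal elementary fact is the \emph{modular characterization}: for $\lambda>0$ one has $\|\xi\|_{L^{p(t,x)}}\le\lambda$ if and only if $\rho(\xi/\lambda)\le1$. The direction ``$\Leftarrow$'' is immediate from the definition of the infimum; for ``$\Rightarrow$'', note that $\|\xi\|\le\lambda$ forces $\rho(\xi/\mu)\le1$ for every $\mu>\lambda$ (by monotonicity of $\rho(\xi/\cdot)$), and letting $\mu\downarrow\lambda$ the monotone convergence theorem gives $\rho(\xi/\lambda)\le1$. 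In particular $\|\xi\|_{L^{p(t,x)}}<\infty$ precisely for $\xi\in L^{p(t,x)}(\Omega_T)$, so the norm is well defined on the space.

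For the norm axioms I would argue: \emph{homogeneity} $\|c\xi\|=|c|\,\|\xi\|$ follows from the substitution $\lambda\mapsto\lambda/|c|$ in the defining infimum (the case $c=0$ being trivial); \emph{definiteness} --- if $\xi\ne0$ on a set of positive measure, choose $c>0$ with $|E|>0$ where $E:=\{|\xi|\ge c\}$, and observe that for $0<\lambda<c$ we have $(|\xi|/\lambda)^{p(t,x)}\ge c/\lambda$ on $E$ (since $|\xi|/\lambda\ge c/\lambda>1$ and $p(t,x)\ge1$), so $\rho(\xi/\lambda)\ge(c/\lambda)|E|\to+\infty$ as $\lambda\to0_+$, contradicting $\xi\in L^{p(t,x)}(\Omega_T)$; hence $\|\xi\|=0\Rightarrow\xi=0$ a.e. For the \emph{triangle inequality}, pick $\lambda>\|\xi\|$, $\mu>\|\eta\|$; since $z\mapsto|z|^{p(t,x)}$ is convex ($p(t,x)\ge1$),
\[
\rho\!\left(\frac{\xi+\eta}{\lambda+\mu}\right)=\int_{\Omega_T}\Bigl|\tfrac{\lambda}{\lambda+\mu}\tfrac{\xi}{\lambda}+\tfrac{\mu}{\lambda+\mu}\tfrac{\eta}{\mu}\Bigr|^{p(t,x)}\diff x\diff t\le\frac{\lambda}{\lambda+\mu}\rho\!\left(\tfrac{\xi}{\lambda}\right)+\frac{\mu}{\lambda+\mu}\rho\!\left(\tfrac{\eta}{\mu}\right)\le1,
\]
so $\|\xi+\eta\|\le\lambda+\mu$ and, taking infima, $\|\xi+\eta\|\le\|\xi\|+\|\eta\|$; together with homogeneity this also shows $L^{p(t,x)}(\Omega_T)$ is a vector space.

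For completeness, let $\{\xi_n\}$ be a Cauchy sequence. Using $|z|\le1+|z|^{p(t,x)}$ and the modular characterization with $0<\eps<1$: if $\|\xi_n-\xi_m\|\le\eps$ then $\rho((\xi_n-\xi_m)/\eps)\le1$, hence $\int_{\Omega_T}|\xi_n-\xi_m|\le\eps(|\Omega_T|+1)$ (recall $|\Omega_T|<\infty$). Thus $\{\xi_n\}$ is Cauchy in $L^1(\Omega_T)$, so it converges in $L^1$ to some $\xi$, and along a subsequence $\xi_{n_j}\to\xi$ a.e. Now fix $\eps>0$ and $N$ with $\|\xi_n-\xi_m\|\le\eps$ for $n,m\ge N$; for $n\ge N$, since $|(\xi_n-\xi_{n_j})/\eps|^{p(t,x)}\to|(\xi_n-\xi)/\eps|^{p(t,x)}$ a.e., Fatou's lemma yields $\rho((\xi_n-\xi)/\eps)\le\liminf_j\rho((\xi_n-\xi_{n_j})/\eps)\le1$, i.e.\ $\|\xi_n-\xi\|\le\eps$. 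Hence $\xi-\xi_N\in L^{p(t,x)}(\Omega_T)$, so $\xi\in L^{p(t,x)}(\Omega_T)$, and $\xi_n\to\xi$ in $L^{p(t,x)}(\Omega_T)$. The genuinely routine parts aside, the only points needing a little care are this modular--norm equivalence (the monotone convergence step), the convexity used for the triangle inequality, and the Fatou step; there is no real obstacle here, the statement being the classical fact that variable exponent Lebesgue spaces are Banach.
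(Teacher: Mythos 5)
The paper records this theorem in Appendix~\ref{app:musielaki} as a recalled classical fact and gives no proof, so there is no argument of the authors to compare against; your proof is the standard one (modular--norm equivalence via monotone convergence, convexity of $z\mapsto|z|^{p(t,x)}$ for the triangle inequality, the embedding into $L^1(\Omega_T)$ plus Fatou's lemma for completeness) and it is correct. One cosmetic remark: in the definiteness step the contradiction you reach is with the fact that $\|\xi\|=0$ forces $\rho(\xi/\lambda)\le 1$ for \emph{every} $\lambda>0$ (by your own modular characterization), not literally with the membership $\xi\in L^{p(t,x)}(\Omega_T)$; stating it that way makes the step airtight.
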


\noindent We will be interested in the two kinds of convergences in the aforementioned spaces.

\begin{Def}
We say that $\xi_n$ converges strongly to $\xi$ (denoted $\xi_n \to \xi$) in $L^{p(t, x)}(\Omega_T)$, if
$$
\|\xi_n - \xi\|_{L^{p(t, x)}}\rightarrow 0,
$$
and that $\xi_n$ converges modularly to $\xi$, if there exists $\lambda > 0$ such that
$$
\int_{\Omega_T} \left| \frac{\xi_n(t,x) - \xi(t,x)}{\lambda} \right|^{p(t, x)} \diff x \diff t \to 0.
$$
\end{Def}

\noindent Modular and strong convergences are connected in the following form.

\begin{thm} The following equivalence holds true:
\begin{align*}
    \|\xi_n - \xi\|_{L^{p(t, x)}} \to 0 \Longleftrightarrow \int_{\Omega_T} \left| \frac{\xi_n(t,x) - \xi(t,x)}{\lambda} \right|^{p(t, x)} \diff x \diff t \to 0 \text{ for every }\lambda >0
\end{align*}
\end{thm}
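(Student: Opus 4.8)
The plan is to prove the two implications separately, using only the definition of the Luxemburg norm \eqref{intro:norm} as an infimum, the monotonicity of the map $\mu\mapsto \int_{\Omega_T}|\xi(t,x)/\mu|^{p(t,x)}\diff x\diff t$, and the elementary bound $s^{p(t,x)}\le s$, valid for $0<s\le 1$ since $p(t,x)\ge 1$. Throughout I would abbreviate $g_n:=\xi_n-\xi$ and denote the modular by $\varrho(g):=\int_{\Omega_T}|g(t,x)|^{p(t,x)}\diff x\diff t$.

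First I would handle the reverse implication ($\Leftarrow$). Assume $\varrho(g_n/\lambda)\to 0$ for every $\lambda>0$ and fix $\eps>0$. Applying the hypothesis with the single value $\lambda=\eps$, for all $n$ large enough we have $\varrho(g_n/\eps)\le 1$, and then the very definition $\|g_n\|_{L^{p(t,x)}}=\inf\{\lambda>0:\varrho(g_n/\lambda)\le 1\}$ immediately yields $\|g_n\|_{L^{p(t,x)}}\le\eps$. As $\eps>0$ is arbitrary, $\|g_n\|_{L^{p(t,x)}}\to 0$; note that this direction only exploits the hypothesis along a sequence $\lambda\to 0_+$.

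Next I would treat the forward implication ($\Rightarrow$). Assume $\|g_n\|_{L^{p(t,x)}}\to 0$, fix $\lambda>0$, and pick an auxiliary $\eps\in(0,1)$. For $n$ large we have $\|g_n\|_{L^{p(t,x)}}<\eps\lambda$; since $\eps\lambda$ strictly exceeds the infimum $\|g_n\|_{L^{p(t,x)}}$, there is a $\mu\in[\|g_n\|_{L^{p(t,x)}},\eps\lambda)$ with $\varrho(g_n/\mu)\le 1$, and monotonicity in the scaling parameter gives $\varrho(g_n/(\eps\lambda))\le\varrho(g_n/\mu)\le 1$. Then, using $\eps<1$ and $p(t,x)\ge 1$,
\[
\varrho(g_n/\lambda)=\int_{\Omega_T}\eps^{p(t,x)}\Big|\frac{g_n(t,x)}{\eps\lambda}\Big|^{p(t,x)}\diff x\diff t\le\eps\,\varrho\big(g_n/(\eps\lambda)\big)\le\eps .
\]
Letting $n\to\infty$ and then $\eps\to 0_+$ shows $\varrho(g_n/\lambda)\to 0$, and since $\lambda>0$ was arbitrary this is exactly the modular convergence for every $\lambda$.

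The only point that needs a little care — and the one I would state explicitly — is the step in the forward implication where $\|g_n\|_{L^{p(t,x)}}<\eps\lambda$ is upgraded to $\varrho(g_n/(\eps\lambda))\le 1$: this is precisely the fact that a value strictly above the infimum already forces the modular to be $\le 1$, which follows from the monotonicity of $\mu\mapsto\varrho(g_n/\mu)$ together with the definition of the infimum. Everything else reduces to the scalar inequality $\eps^{p(t,x)}\le\eps$ and two invocations of the definition of $\|\cdot\|_{L^{p(t,x)}}$, so no further obstacle is expected.
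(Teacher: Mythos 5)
Your proof is correct. The paper states this theorem in Appendix~\ref{app:musielaki} without any proof (the appendix explicitly recalls these facts ``mostly without the proof''), so there is nothing to compare against; your argument is the standard one, combining the definition of the Luxemburg norm as an infimum with the monotonicity of the modular in the scaling parameter and the pointwise bound $\varepsilon^{p(t,x)}\le\varepsilon$ for $\varepsilon\in(0,1)$ and $p\ge 1$. It is worth noting that your argument nowhere uses the upper bound $p\le p_{\max}$, so the equivalence holds for any measurable exponent $p\ge 1$; the boundedness of $p$ only enters in the subsequent corollary, which identifies modular convergence for a single $\lambda$ with strong convergence.
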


\begin{cor}
If $p(t, x)$ satisfies the boundedness conditions \ref{ass:usual_bounds_exp} or \ref{ass:usual_bounds_exp_entr}, then strong and modular convergences are equivalent.
\end{cor}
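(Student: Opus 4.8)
The plan is to read the corollary straight off the preceding theorem, which already identifies strong convergence with modular convergence \emph{for every} scaling parameter $\lambda>0$. Since modular convergence as defined only requires the existence of \emph{one} such $\lambda$, the entire content is the passage from ``one $\lambda$'' to ``all $\lambda$'', and this is exactly where the two-sided bound on $p(t,x)$ is used.

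The forward implication is immediate and uses nothing about the bounds: if $\xi_n\to\xi$ strongly, the theorem gives $\int_{\Omega_T}|(\xi_n-\xi)/\lambda|^{p(t,x)}\diff x\diff t\to 0$ for every $\lambda>0$, hence in particular for some $\lambda>0$, which is modular convergence. For the converse, suppose there is $\lambda_0>0$ with $\int_{\Omega_T}|(\xi_n-\xi)/\lambda_0|^{p(t,x)}\diff x\diff t\to 0$, fix an arbitrary $\lambda>0$, and write pointwise $|(\xi_n-\xi)/\lambda|^{p(t,x)}=(\lambda_0/\lambda)^{p(t,x)}\,|(\xi_n-\xi)/\lambda_0|^{p(t,x)}$. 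If $\lambda\ge\lambda_0$, then $\lambda_0/\lambda\le 1$ together with $p(t,x)\ge 1$ forces $(\lambda_0/\lambda)^{p(t,x)}\le 1$, so the integral is dominated by $\int_{\Omega_T}|(\xi_n-\xi)/\lambda_0|^{p(t,x)}\diff x\diff t\to 0$. If $\lambda<\lambda_0$, then $\lambda_0/\lambda>1$, whence $(\lambda_0/\lambda)^{p(t,x)}\le(\lambda_0/\lambda)^{p_{\max}}<\infty$ and therefore $\int_{\Omega_T}|(\xi_n-\xi)/\lambda|^{p(t,x)}\diff x\diff t\le(\lambda_0/\lambda)^{p_{\max}}\int_{\Omega_T}|(\xi_n-\xi)/\lambda_0|^{p(t,x)}\diff x\diff t\to 0$. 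Since $\lambda>0$ was arbitrary, the theorem yields $\|\xi_n-\xi\|_{L^{p(t,x)}}\to 0$.

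There is essentially no obstacle here: the only nontrivial point is the estimate $(\lambda_0/\lambda)^{p(t,x)}\le(\lambda_0/\lambda)^{p_{\max}}$ in the case $\lambda<\lambda_0$, which is precisely where the essential upper bound $p_{\max}<\infty$ from \ref{ass:usual_bounds_exp} or \ref{ass:usual_bounds_exp_entr} is genuinely invoked; without it the constant could fail to be finite and the two convergences would in general differ. I would therefore present the argument exactly in the short two-case rescaling form above, citing the preceding theorem for both directions.
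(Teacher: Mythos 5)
Your proof is correct and is exactly the intended deduction: the paper states this corollary without proof as an immediate consequence of the preceding equivalence theorem, and your two-case rescaling argument $(\lambda_0/\lambda)^{p(t,x)}\le\max\{1,(\lambda_0/\lambda)^{p_{\max}}\}$ is the standard way to pass from ``one $\lambda$'' to ``all $\lambda$'' using $p_{\max}<\infty$. Nothing to add.
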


\noindent In the case of the variable exponent $L^{p(t, x)}$ spaces, we have a version of the H\"{o}lder inequality.
\begin{prop}\label{prop:generalisedholder}
Suppose $f\in L^{p(t, x)}(\Omega)$ and $g\in L^{p'(t, x)}(\Omega)$. Then,
$$
\int_{\Omega}|f\,g|\diff x \leq 2\|f\|_{L^{p(t, x)}}\|g\|_{L^{p'(t, x)}}.
$$
\end{prop}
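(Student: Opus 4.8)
The plan is to follow the classical route via the pointwise Young inequality, after normalising by the Luxemburg norms. We may assume that both $\|f\|_{L^{p(t,x)}}$ and $\|g\|_{L^{p'(t,x)}}$ are strictly positive (they are automatically finite, since $f\in L^{p(t,x)}(\Omega)$ and $g\in L^{p'(t,x)}(\Omega)$); otherwise one of $f,g$ vanishes almost everywhere on $\Omega$ and the inequality is trivial.

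The first step is to record that the modular does not exceed $1$ at the Luxemburg norm. Fixing $\xi\in L^{p(t,x)}(\Omega)$ with $0<\|\xi\|_{L^{p(t,x)}}<\infty$, I would pick a sequence $\lambda_j\downarrow\|\xi\|_{L^{p(t,x)}}$ with $\int_\Omega|\xi/\lambda_j|^{p(x)}\diff x\le 1$, which is possible by \eqref{intro:norm}. Since $p(x)\ge 1$, the integrands $|\xi/\lambda_j|^{p(x)}$ increase pointwise to $|\xi/\|\xi\|_{L^{p(t,x)}}|^{p(x)}$ as $j\to\infty$, so the monotone convergence theorem yields
\[
\int_\Omega\left|\frac{\xi(x)}{\|\xi\|_{L^{p(t,x)}}}\right|^{p(x)}\diff x\le 1,
\]
and the same reasoning applies with $p$ replaced by $p'$.

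Next, set $\widetilde f:=f/\|f\|_{L^{p(t,x)}}$ and $\widetilde g:=g/\|g\|_{L^{p'(t,x)}}$, so that $\int_\Omega|\widetilde f|^{p(x)}\diff x\le 1$ and $\int_\Omega|\widetilde g|^{p'(x)}\diff x\le 1$ by the previous step. For almost every $x\in\Omega$ the exponents $p(x),p'(x)$ are finite conjugate exponents (recall that in the present setting \eqref{sta1} forces $p_{\min}>1$, so the degenerate value $p(x)=1$ does not occur), and the pointwise Young inequality gives
\[
|\widetilde f(x)\,\widetilde g(x)|\le\frac{|\widetilde f(x)|^{p(x)}}{p(x)}+\frac{|\widetilde g(x)|^{p'(x)}}{p'(x)}\le|\widetilde f(x)|^{p(x)}+|\widetilde g(x)|^{p'(x)},
\]
where in the last inequality I used $1/p(x)\le 1$ and $1/p'(x)\le 1$. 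Integrating over $\Omega$ and invoking the two modular bounds,
\[
\int_\Omega|\widetilde f\,\widetilde g|\diff x\le\int_\Omega|\widetilde f|^{p(x)}\diff x+\int_\Omega|\widetilde g|^{p'(x)}\diff x\le 2,
\]
and multiplying through by $\|f\|_{L^{p(t,x)}}\|g\|_{L^{p'(t,x)}}$ gives the asserted estimate. No step here is a genuine obstacle; the only point deserving a line of care is the modular bound at the Luxemburg norm, which is precisely the monotone-convergence argument above.
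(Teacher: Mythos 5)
Your proof is correct. The paper states this generalized H\"older inequality without proof (it is quoted as a standard fact from the Musielak--Orlicz literature), and your argument --- normalising by the Luxemburg norms, the monotone-convergence check that the modular at the norm is at most $1$, and the pointwise Young inequality with the crude bounds $1/p(x)\le 1$ and $1/p'(x)\le 1$ producing the constant $2$ --- is exactly the standard one; the only degenerate case $p(x)=1$ is correctly excluded by \eqref{sta1}.
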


\begin{prop}\label{prop:convergence_of_convolution_in_musielak_orlicz}
Let $f\in L^{p(t, x)}(\Omega_T)$ with $p$ satisfying Assumption \ref{ass:exponent_cont_space} or Assumption \ref{ass:exponent_cont_space_entr}, then for any $\psi\in C^\infty_c(\Omega)$
$$
f^\kappa\psi \rightarrow f\psi\text{, in }L^{p(t, x)}(\Omega_T).
$$
\end{prop}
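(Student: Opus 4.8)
The statement is a localised form (cut off by $\psi$) of the fact that the spatial mollification converges in the variable exponent norm; the only ingredient that is not soft measure theory is that the log-H\"older continuity of $p$ in the spatial variable makes $h\mapsto h^\kappa$ act boundedly on $L^{p(t,x)}(\Omega_T)$, uniformly in $\kappa$. The plan is as follows. Since $p$ is bounded, strong convergence in $L^{p(t,x)}(\Omega_T)$ coincides with modular convergence (the Corollary of this appendix), so it suffices to prove
\[
\int_{\Omega_T}|f^\kappa\psi-f\psi|^{p(t,x)}\diff x\diff t\longrightarrow 0\qquad\text{as }\kappa\to 0_+ .
\]
First I would localise: fix $\tilde\psi\in C^\infty_c(\Omega)$ with $\tilde\psi\equiv 1$ on a neighbourhood of $\supp\psi$. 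For $\kappa$ small enough (so that $B_\kappa(x)\subset\{\tilde\psi=1\}$ for every $x\in\supp\psi$) one has $f^\kappa\psi=(f\tilde\psi)^\kappa\psi$ on $\Omega_T$; replacing $f$ by $f\tilde\psi$ I may thus assume that $f\in L^{p(t,x)}(\Omega_T)$ is compactly supported in the spatial variable inside $\Omega$, so that extending it by zero to $\R^d\times(0,T)$ causes no boundary effect in $f^\kappa$.

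Next I would split off the bounded part: for $N\in\N$ write $f=f\chi_N(f)+f(1-\chi_N(f))=:f_N+g_N$ (recall \eqref{chik}). The function $f_N$ is bounded by $N$, hence so are all $f_N^\kappa$, and $f_N^\kappa\to f_N$ a.e.\ as $\kappa\to 0_+$; since $|\Omega_T|<\infty$ and the integrand is dominated by a fixed constant times $\chi_{\supp\psi}$, the Lebesgue dominated convergence theorem gives $\|f_N^\kappa\psi-f_N\psi\|_{L^{p(t,x)}(\Omega_T)}\to 0$ as $\kappa\to0_+$ for each fixed $N$. For $g_N$, dominated convergence applied to $\int_{\Omega_T}|g_N|^{p(t,x)}=\int_{\{|f|>N\}}|f|^{p(t,x)}$ shows $\|g_N\|_{L^{p(t,x)}(\Omega_T)}\to 0$ as $N\to\infty$, and the key point is the bound
\[
\|g_N^\kappa\,\psi\|_{L^{p(t,x)}(\Omega_T)}\le C\,\|g_N\|_{L^{p(t,x)}(\Omega_T)}\qquad\text{with }C\text{ independent of }\kappa\text{ and }N .
\]
This is exactly where Assumption~\ref{ass:cont} (resp.~\ref{ass:cont_entr}) is used: the log-H\"older continuity of $p(t,\cdot)$ on $\Omega$, uniform in $t$, together with $p_{\min}>1$, makes the spatial mollification bounded on $L^{p(t,x)}(\Omega_T)$ with a constant depending only on $d$, $|\Omega|$, $p_{\min}$, $p_{\max}$ and the log-H\"older constant; in particular it is independent of $\kappa$ (one can for instance invoke the boundedness of the spatial Hardy--Littlewood maximal operator on $L^{p(t,\cdot)}(\Omega)$, uniform in $t$, together with $|g_N^\kappa|\le c_d M_xg_N$). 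This is standard and I would quote \cite{chlebicka2019book} for it.

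Finally, combining the three estimates gives, for each fixed $N$,
\[
\limsup_{\kappa\to0_+}\|f^\kappa\psi-f\psi\|_{L^{p(t,x)}(\Omega_T)}\le\limsup_{\kappa\to0_+}\|g_N^\kappa\psi\|_{L^{p(t,x)}(\Omega_T)}+\|g_N\psi\|_{L^{p(t,x)}(\Omega_T)}\le(C+1)\|\psi\|_{\infty}\,\|g_N\|_{L^{p(t,x)}(\Omega_T)},
\]
and letting $N\to\infty$ finishes the proof. The hard part is the displayed uniform-in-$\kappa$ boundedness of the mollification on $L^{p(t,x)}$: it is the only step that genuinely exploits the spatial log-H\"older continuity of $p$ (and $p_{\min}>1$), whereas the density/dominated-convergence arguments around it work simply because $p$ is bounded and $|\Omega_T|<\infty$.
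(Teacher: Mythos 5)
The paper states this proposition in the appendix without proof (it is one of the ``technical tools \dots recalled \dots mostly without the proof''), so there is no in-paper argument to compare against; judged on its own, your proof is correct and follows the standard route (localise, truncate, use a uniform-in-$\kappa$ bound for the mollifier on the tail, and dominated convergence on the bounded part). The localisation identity $f^\kappa\psi=(f\tilde\psi)^\kappa\psi$ for small $\kappa$ is right, the reduction of strong to modular convergence is covered by the appendix's Corollary, and you correctly isolate the only nontrivial ingredient: the $\kappa$-uniform boundedness of spatial mollification, which is where \ref{ass:cont}/\ref{ass:cont_entr} and $p_{\min}>1$ enter.

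One step deserves more care than you give it: passing from the boundedness of $M_x$ on $L^{p(t,\cdot)}(\Omega)$ \emph{for each fixed $t$} to the bound $\|g_N^\kappa\psi\|_{L^{p(t,x)}(\Omega_T)}\le C\|g_N\|_{L^{p(t,x)}(\Omega_T)}$ on the space--time Luxemburg norm. Norm inequalities on time slices do not integrate in $t$ directly, because the modular and the norm are only comparable through the exponents $p_{\min},p_{\max}$ (one gets $\rho_t(M_xg(t,\cdot))\le C\max\bigl(\rho_t(g(t,\cdot))^{p_{\min}/p_{\max}},\rho_t(g(t,\cdot))^{p_{\max}/p_{\min}}\bigr)$, and the small-modular branch does not sum to something controlled by $\int_0^T\rho_t(g)\,\diff t$). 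You can repair this either by working on the unit ball of $L^{p(t,x)}(\Omega_T)$ and using homogeneity of the norm (which yields a constant depending additionally on $T$), or --- more cleanly --- by replacing the maximal-function argument with the pointwise modular estimate for mollification under log-H\"older continuity (Diening's key estimate), $|g^\kappa(t,x)|^{p(t,x)}\le C\bigl((|g(t,\cdot)|^{p(t,\cdot)})^\kappa(x)+h(t,x)\bigr)$ with $h\in L^1$ uniformly in $\kappa$, which integrates over $\Omega_T$ with no further work and also avoids any appeal to $p_{\min}>1$ beyond what the assumptions already give. With that adjustment the argument is complete.
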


\noindent The theorem below makes a connection between modular convergence and the convergences of the products as formulated below.

\begin{thm}\label{thm:modular_l1}\textbf{\textup{(Proposition 2.2, \cite{gwiazda2008onnonnewtonian})}}
Assume that the function $p(t, x)$ satisfies Assumption \ref{ass:exponent_cont_space} or Assumption \ref{ass:exponent_cont_space_entr}. Presuppose also that $\phi_n \to \phi$ modularly in $L^{p(t, x)}(\Omega_T)$ and $\psi_n\to \psi$ modularly in $L^{p'(t, x)}(\Omega_T)$. Then, $\phi_n\,\psi_n \to \phi\,\psi$ in $L^1(\Omega_T)$.
\end{thm}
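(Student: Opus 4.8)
The plan is to deduce the $L^1$-convergence of the products from the almost-everywhere convergence of the factors together with a uniform-integrability argument; a direct use of the generalized Hölder inequality (Proposition~\ref{prop:generalisedholder}) is not available, because modular convergence is strictly weaker than norm convergence. Throughout I would use that $|\Omega_T|<\infty$ and that, under Assumption~\ref{ass:exponent_cont_space} or~\ref{ass:exponent_cont_space_entr}, both $p$ and its conjugate $p'$ are bounded, $1<p_{\min}\le p(t,x)\le p_{\max}<\infty$ (the bound on $p'$ since $p_{\min}>1$); and that the modular convergence hypotheses furnish constants $\lambda,\mu>0$ with $\int_{\Omega_T}|(\phi_n-\phi)/\lambda|^{p(t,x)}\diff x\diff t\to 0$ and $\int_{\Omega_T}|(\psi_n-\psi)/\mu|^{p'(t,x)}\diff x\diff t\to 0$ (in particular $\phi,\psi$ and all $\phi_n,\psi_n$ lie in the corresponding spaces).

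First I would reduce the claim to a.e.\ convergence plus uniform integrability. Since $p$ and $p'$ are bounded between $1$ and $p_{\max}$, Chebyshev's inequality applied to the two modulars shows that $\phi_n\to\phi$ and $\psi_n\to\psi$ in measure, so along a subsequence (not relabelled) they converge a.e., hence $\phi_n\psi_n\to\phi\psi$ a.e. By the subsequence principle it then suffices to prove that $\{\phi_n\psi_n\}_n$ is uniformly integrable in $L^1(\Omega_T)$: the Vitali convergence theorem would give $\phi_n\psi_n\to\phi\psi$ in $L^1(\Omega_T)$ along the a.e.-convergent subsequence, and since every subsequence contains a further subsequence with this same $L^1$-limit, the whole sequence converges.

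Next I would establish the uniform integrability. The Fenchel--Young inequality for the pointwise conjugate pair yields, for a.e.\ $(t,x)$,
\[
|\phi_n\psi_n|\;\le\;4\lambda\mu\Bigl(\bigl|\tfrac{\phi_n}{2\lambda}\bigr|^{p(t,x)}+\bigl|\tfrac{\psi_n}{2\mu}\bigr|^{p'(t,x)}\Bigr),
\]
so it is enough to show that $\{|\phi_n/(2\lambda)|^{p(t,x)}\}_n$ and, symmetrically, $\{|\psi_n/(2\mu)|^{p'(t,x)}\}_n$ are uniformly integrable. For the first family, convexity of $s\mapsto s^{p(t,x)}$ gives pointwise
\[
\bigl|\tfrac{\phi_n}{2\lambda}\bigr|^{p(t,x)}\;\le\;\tfrac12\bigl|\tfrac{\phi_n-\phi}{\lambda}\bigr|^{p(t,x)}+\tfrac12\bigl|\tfrac{\phi}{\lambda}\bigr|^{p(t,x)}.
\]
Given $\eps>0$ I would pick $N$ with $\int_{\Omega_T}|(\phi_n-\phi)/\lambda|^{p(t,x)}\diff x\diff t<\eps$ for $n\ge N$; the finitely many functions $|\phi/\lambda|^{p(t,x)}$ and $|\phi_n/(2\lambda)|^{p(t,x)}$, $n<N$, belong to $L^1(\Omega_T)$ and hence are jointly uniformly integrable, so there is $\delta>0$ such that $\int_E|\phi/\lambda|^{p(t,x)}\diff x\diff t<\eps$ and $\int_E|\phi_n/(2\lambda)|^{p(t,x)}\diff x\diff t<\eps$ (for $n<N$) whenever $|E|<\delta$. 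Combining this with the convexity estimate gives $\sup_n\int_E|\phi_n/(2\lambda)|^{p(t,x)}\diff x\diff t\le\eps$ for every $E$ with $|E|<\delta$, and taking $E=\Omega_T$ provides the uniform $L^1$-bound; the same argument handles $\psi_n$, and the Fenchel--Young estimate then transfers uniform integrability to $\{\phi_n\psi_n\}_n$.

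Finally, these ingredients combine: a.e.\ convergence along the subsequence, uniform integrability, and $|\Omega_T|<\infty$ give $\phi_n\psi_n\to\phi\psi$ in $L^1(\Omega_T)$ by Vitali, and the subsequence principle upgrades this to the full sequence. I expect the uniform-integrability step to be the only delicate point — in particular keeping the scaling constants consistent so that the convexity split produces a genuinely uniform smallness bound, and using the boundedness of $p$ (hence of $p'$) exactly where convergence in measure is extracted; everything else is routine.
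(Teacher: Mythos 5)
Your argument is correct: extracting convergence in measure from the modulars via the boundedness of $p$ and $p'$, splitting by convexity to get uniform integrability of $|\phi_n/(2\lambda)|^{p(t,x)}$ and $|\psi_n/(2\mu)|^{p'(t,x)}$, transferring it to the products by the pointwise Young inequality, and concluding with Vitali plus the subsequence principle is exactly the standard proof of this fact. Note that the paper does not prove this statement at all --- it is quoted as Proposition~2.2 of \cite{gwiazda2008onnonnewtonian} --- so there is nothing to compare against; the only cosmetic slip is the phrase ``taking $E=\Omega_T$'' (which does not satisfy $|E|<\delta$), but the uniform $L^1$-bound you want there follows directly from the convexity estimate integrated over all of $\Omega_T$, so the substance is fine.
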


\section{Useful results}\label{Ap2}
\begin{lem}\label{aubin-lions}\textup{\textbf{(Generalized Aubin--Lions lemma, \cite[Lemma 7.7]{MR3014456})}}
Denote
$$
W^{1, p, q}(I; X_1, X_2) := \left\{u\in L^p(I; X_1); \frac{du}{dt}\in L^q(I; X_2)\right\}.
$$
Then, if $X_1$ is a separable, reflexive Banach space, $X_2$ is a Banach space and $X_3$ is a metrizable locally convex Hausdorff space, $X_1$ embeds compactly into $X_2$, $X_2$ embeds continuously into $X_3$, $1 < p <\infty$ and $1\leq q\leq \infty$, we have
$$
W^{1, p, q}(I; X_1, X_3) \text{ embeds compactly into }L^p(I; X_2).
$$
In particular, any bounded sequence in $W^{1, p, q}(I; X_1, X_3)$ has a convergent subsequence in $L^p(I; X_2)$.
\end{lem}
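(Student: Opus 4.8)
The plan is to deduce the statement from the classical Fr\'echet--Kolmogorov (Simon) characterisation of relatively compact subsets of $L^p(I;X_2)$, the only genuinely new ingredient being that $X_3$ is merely a metrizable locally convex space rather than a Banach space, so that the usual Ehrling interpolation inequality must be re-established in this setting. Concretely, I would recall that a bounded family $\mathcal{F}\subset L^p(I;X_2)$, $1\le p<\infty$, is relatively compact provided (i) for every subinterval $[t_1,t_2]\subset I$ the set $\bigl\{\int_{t_1}^{t_2}f\,\diff s:f\in\mathcal{F}\bigr\}$ is relatively compact in $X_2$, and (ii) $\sup_{f\in\mathcal{F}}\|\tau_h f-f\|_{L^p(I_h;X_2)}\to 0$ as $h\to 0_+$, where $\tau_h$ denotes the time shift and $I_h:=\{t\in I:t+h\in I\}$. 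Given a bounded sequence $\{u_n\}$ in $W^{1,p,q}(I;X_1,X_3)$ --- hence bounded in $L^p(I;X_1)$ with $\{u_n'\}$ bounded in $L^q(I;X_3)$, and therefore bounded in $L^p(I;X_2)$ because $X_1$ embeds continuously into $X_2$ --- it then suffices to check (i) and (ii) for $\mathcal{F}=\{u_n\}$.

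Property (i) is routine: H\"older's inequality makes $\bigl\{\int_{t_1}^{t_2}u_n\,\diff s\bigr\}_n$ bounded in $X_1$, and the compactness of the embedding $X_1\hookrightarrow X_2$ turns a bounded set into a relatively compact one. The main obstacle is (ii), and its heart is the following metric-space version of Ehrling's lemma, which I would prove first. Fix on $X_3$ a non-decreasing sequence of continuous seminorms $q_1\le q_2\le\cdots$ generating its topology (possible since $X_3$ is metrizable and locally convex; Hausdorffness forces them to separate points). Then for every $\varepsilon>0$ there are an index $j_\varepsilon$ and a constant $C_\varepsilon$ such that $\|v\|_{X_2}\le\varepsilon\|v\|_{X_1}+C_\varepsilon\,q_{j_\varepsilon}(v)$ for all $v\in X_1$; I would prove this by contradiction, observing that failure for some $\varepsilon_0$ produces, for each $j$, a vector $v_j$ with $\|v_j\|_{X_2}=1$, $\|v_j\|_{X_1}<1/\varepsilon_0$ and $q_j(v_j)<1/j$, whence monotonicity of the seminorms gives $v_j\to 0$ in $X_3$, while boundedness in $X_1$ together with the compact embedding yields a subsequence converging in $X_2$ whose limit must be $0$ by continuity of $X_2\hookrightarrow X_3$ and the Hausdorff property --- contradicting $\|v_j\|_{X_2}=1$.

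Granting this, I would finish (ii) as follows. Since $u_n'\in L^q(I;X_3)\subset L^1(I;X_3)$ on the bounded interval $I$, one has $(\tau_h u_n-u_n)(t)=\int_0^h u_n'(t+s)\,\diff s$ for a.e.\ $t$, so $q_{j_\varepsilon}\bigl((\tau_h u_n-u_n)(t)\bigr)\le g_n(t):=\int_t^{t+h}q_{j_\varepsilon}(u_n'(s))\,\diff s$, where $0\le g_n\le A:=\sup_n\|q_{j_\varepsilon}(u_n')\|_{L^1(I)}<\infty$ (finite since $\{u_n'\}$ is bounded in $L^q(I;X_3)$ and $I$ is bounded) and $\|g_n\|_{L^1(I_h)}\le hA$ by Fubini; interpolating these two bounds gives $\|g_n\|_{L^p(I_h)}\le A\,h^{1/p}$, uniformly in $n$. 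Applying the Ehrling inequality pointwise in $t$ to $v=(\tau_h u_n-u_n)(t)$ and taking $L^p(I_h)$ norms then yields
\[
\|\tau_h u_n-u_n\|_{L^p(I_h;X_2)}\le \varepsilon\,\|\tau_h u_n-u_n\|_{L^p(I_h;X_1)}+C_\varepsilon\,\|g_n\|_{L^p(I_h)}\le 2\varepsilon M+C_\varepsilon A\,h^{1/p},
\]
where $M$ bounds $\{u_n\}$ in $L^p(I;X_1)$; choosing first $\varepsilon$ and then $h$ small makes the right-hand side arbitrarily small, uniformly in $n$, which is precisely (ii). The Fr\'echet--Kolmogorov criterion then gives a subsequence of $\{u_n\}$ converging strongly in $L^p(I;X_2)$, i.e.\ the asserted compact embedding; and if one additionally wants to identify limits, reflexivity of $X_1$ supplies $u_n\rightharpoonup u$ in $L^p(I;X_1)$, whence the continuous inclusion $L^p(I;X_1)\hookrightarrow L^p(I;X_2)$ forces the strong limit to equal $u$. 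The remaining measure-theoretic checks and the precise form of Simon's criterion can be quoted from \cite{MR3014456}.
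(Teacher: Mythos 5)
Your proof is correct. Note, however, that the paper itself offers no proof of this lemma: it is quoted verbatim from the cited reference (Roub\'{\i}\v{c}ek, Lemma 7.7), and your argument is essentially a faithful reconstruction of the standard proof given there --- Simon's translation criterion for relative compactness in $L^p(I;X_2)$ combined with an Ehrling-type interpolation inequality adapted to the seminorms generating the topology of the metrizable locally convex space $X_3$. The only point you (reasonably) defer to the reference is the meaning of $\frac{du}{dt}\in L^q(I;X_3)$ and of the identity $(\tau_h u - u)(t)=\int_t^{t+h}u'(s)\,\diff s$ when $X_3$ is merely locally convex and possibly incomplete; with that understood, the verification of both conditions of Simon's criterion and the contradiction argument for the generalized Ehrling lemma are sound.
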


\begin{lem}\label{res:monot_trick}\textup{\textbf{(Lemma 2.16, \cite{bulicek2021parabolic})}}
Let the mapping $A$ satisfy Assumption~\ref{intro:ass_on_A}. Assume there are $\chi \in L^{p'(t,x)}(\Omega_T;\R^d)$ and $\xi \in L^{p(t,x)}(\Omega_T;\R^d)$, such that
\begin{equation*}
\int_{\Omega_T} \left(\chi - A(t,x,\eta)\right) \cdot (\xi - \eta)\, \psi(x)\diff t \diff x \geq 0
\end{equation*}
for all $\eta \in L^{\infty}(\Omega_T; \R^d)$, and $\psi \in C_0^{\infty}(\Omega)$ with $0 \leq \psi \leq 1$. Then,
$$
A(t,x,\xi) = \chi(t,x) \mbox{ a.e. in } \Omega_T.
$$
\end{lem}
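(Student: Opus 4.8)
The plan is to run the classical Browder--Minty monotonicity argument, the only real subtlety being that $\xi$ is not assumed bounded, so the usual perturbation $\eta=\xi-\lambda B$ is inadmissible and has to be cut off on the set where $|\xi|$ is large.

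Two elementary facts would be recorded first. Since $1<p_{\min}\le p\le p_{\max}<\infty$, the conjugate exponent $p'$ is bounded, so \ref{intro:ass_on_A:coercgr} together with the Young inequality gives: if $w\in L^\infty(\Omega_T;\R^d)$ then $A(\cdot,\cdot,w)\in L^\infty(\Omega_T;\R^d)$, with a bound depending only on $\|w\|_\infty$, $c$, $\|h\|_\infty$, $p_{\min}$, $p_{\max}$. In particular, writing $\Omega_T^N:=\{(t,x)\in\Omega_T:|\xi(t,x)|\le N\}$, the map $(t,x)\mapsto A(t,x,\xi(t,x))$ is measurable and bounded on each $\Omega_T^N$. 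Moreover, by Proposition~\ref{prop:generalisedholder}, $\chi\cdot\xi\in L^1(\Omega_T)$.

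The core step: fix $N\in\mathbb N$, an arbitrary $B\in L^\infty(\Omega_T;\R^d)$, an arbitrary $\psi\in C_c^\infty(\Omega)$ with $0\le\psi\le1$, and $\lambda\in(0,1)$. For $k>N$ I would test the hypothesis with
\[
\eta:=\xi\,\mathbf 1_{\{|\xi|\le k\}}-\lambda B\,\mathbf 1_{\{|\xi|\le N\}}\in L^\infty(\Omega_T;\R^d).
\]
On $\{|\xi|\le N\}$ one has $\eta=\xi-\lambda B$ and $\xi-\eta=\lambda B$; on $\{N<|\xi|\le k\}$ one has $\eta=\xi$, hence $\xi-\eta=0$ and the integrand vanishes; on $\{|\xi|>k\}$ one has $\eta=0$, so $A(t,x,\eta)=0$ by \ref{intro:assumA_vanish} and $\xi-\eta=\xi$. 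Thus the hypothesis reduces to
\[
\lambda\int_{\Omega_T^N}\bigl(\chi-A(t,x,\xi-\lambda B)\bigr)\cdot B\,\psi\,\diff x\diff t+\int_{\{|\xi|>k\}}\chi\cdot\xi\,\psi\,\diff x\diff t\ge0 .
\]
Letting $k\to\infty$, the second term vanishes by absolute continuity of the integral (as $\chi\cdot\xi\in L^1$ and $|\{|\xi|>k\}|\to0$); dividing by $\lambda$ gives $\int_{\Omega_T^N}(\chi-A(t,x,\xi-\lambda B))\cdot B\,\psi\,\diff x\diff t\ge0$ for every $\lambda\in(0,1)$. On $\Omega_T^N$ the arguments $\xi-\lambda B$ are bounded by $N+\|B\|_\infty$ uniformly in $\lambda$, so by the fact above $A(\cdot,\cdot,\xi-\lambda B)$ is dominated on $\Omega_T^N$ by a constant, while by continuity \ref{intro:ass_on_A:continuity} it tends to $A(\cdot,\cdot,\xi)$ a.e.\ as $\lambda\to0_+$. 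Since $|\Omega_T^N|<\infty$, the dominated convergence theorem yields
\[
\int_{\Omega_T^N}\bigl(\chi-A(t,x,\xi)\bigr)\cdot B\,\psi\,\diff x\diff t\ge0\qquad\text{for all }B\in L^\infty(\Omega_T;\R^d) .
\]

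Replacing $B$ by $-B$ makes this an equality for every $B\in L^\infty(\Omega_T;\R^d)$; choosing $B:=(\chi-A(\cdot,\cdot,\xi))/|\chi-A(\cdot,\cdot,\xi)|$ (and $B:=0$ where the denominator vanishes), which is measurable with $|B|\le1$, gives $\int_{\Omega_T^N}|\chi-A(t,x,\xi)|\,\psi\,\diff x\diff t=0$. Letting $\psi\nearrow1$ along cut-offs exhausting $\Omega$ yields $\chi=A(t,x,\xi)$ a.e.\ on $\Omega_T^N$, and $N\to\infty$ completes the proof. The main obstacle is precisely the low integrability of $\xi$ together with the absence of any a priori bound on $A(t,x,\xi)$ in $L^{p'(t,x)}$, which is what dictates the three-region cut-off; the two structural ingredients that save the argument are $A(t,x,0)=0$ (so the far region contributes only $\chi\cdot\xi\,\psi$, handled by Hölder) and the growth bound with $p'$ bounded (so that $A$ evaluated at bounded fields is bounded, which both makes $\eta$ admissible and legitimizes the limit $\lambda\to0_+$ on the fixed finite-measure set $\Omega_T^N$).
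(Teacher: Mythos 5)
Your proof is correct. Note that the paper itself does not prove this lemma --- it is quoted without proof from Lemma~2.16 of \cite{bulicek2021parabolic} --- but your argument is precisely the standard two-level truncation version of the Minty trick used there: the test field $\eta=\xi\,\mathbf 1_{\{|\xi|\le k\}}-\lambda B\,\mathbf 1_{\{|\xi|\le N\}}$, the use of \ref{intro:assumA_vanish} so that the far region contributes only the integrable term $\chi\cdot\xi\,\psi$, the limit $k\to\infty$ by absolute continuity, division by $\lambda$ and dominated convergence as $\lambda\to 0_+$ on the finite-measure set $\{|\xi|\le N\}$ (justified by the $L^\infty$ bound on $A$ at bounded arguments coming from \ref{intro:ass_on_A:coercgr}), and finally the choice $B=(\chi-A(\cdot,\cdot,\xi))/|\chi-A(\cdot,\cdot,\xi)|$. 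All the delicate points (admissibility of $\eta$, integrability of each piece, measurability of the final $B$) are handled correctly.
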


\begin{lem}\label{proof:uniqueness:crucial_lemma}
Let $\Omega$ be a Lipschitz domain. Suppose that the function $p$ satisfies Assumption \ref{ass:exponent_cont_space} or Assumption \ref{ass:exponent_cont_space_entr}. Then, there is a family of functions $\left\{\psi_j \right\}_{j \in \N}$ and a family of open sets $\Omega_j \subset \Omega$ fulfilling
\begin{itemize}
    \item $\psi_j\in C^\infty_c(\Omega)$, with support $\Omega_j$,
    \item $|\Omega\setminus\Omega_j|\to 0$ as $j\to\infty$,
    \item $0\leq \psi_j\leq 1$,
    \item $\psi_j \to 1$ as $j\to \infty$,
    \item $\nabla_x\psi_j = 0$ on $\Omega_j$,
\end{itemize}
such that if $u \in L^{\infty}(0,T; L^1(\Omega)) \cap L^1(0,T; W^{1,1}_0(\Omega))$ with $\nabla_x u \in L^{p(t, x)}(\Omega_T;\R^d)$, we have
$$
\int_0^T \int_{\Omega} \left|\nabla\psi_{j}(x) \, u(t,x)\right|^{p(t, x)} \,\diff x \diff t \to 0 \mbox{ as } j \to \infty.
$$
\end{lem}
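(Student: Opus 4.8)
The plan is to exhibit the family $\{\psi_j\}$ explicitly from a regularized distance to $\partial\Omega$, and then to reduce the only non-trivial assertion — that the $L^{p(t,x)}$-modular of $\nabla_x\psi_j\,u$ tends to $0$ — to a one–dimensional Hardy inequality with variable exponent, applied on the fibres of the Lipschitz–graph description of $\partial\Omega$.

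First I would fix a regularized distance $\rho\in C^\infty(\Omega)$ with $c_1\,\dist(x,\partial\Omega)\le\rho(x)\le c_2\,\dist(x,\partial\Omega)$ and $|\nabla_x\rho|\le M$ in $\Omega$ (such a $\rho$ exists for any bounded Lipschitz domain), and a profile $\Theta\in C^\infty(\R)$ with $0\le\Theta\le1$, $\Theta\equiv0$ on $(-\infty,1]$, $\Theta\equiv1$ on $[2,\infty)$, $0\le\Theta'\le2$. I would then set
\[
\psi_j(x):=\Theta\big(j\,\rho(x)\big),\qquad \Omega_j:=\{x\in\Omega:\rho(x)>2/j\}.
\]
From the construction one reads off immediately that $\psi_j\in C^\infty_c(\Omega)$, $0\le\psi_j\le1$, $\psi_j\to1$ pointwise, $\psi_j\equiv1$ on $\Omega_j$ (so $\nabla_x\psi_j=0$ there), $\nabla_x\psi_j$ is supported in $\Omega\setminus\Omega_j\subset\{\dist(\cdot,\partial\Omega)<2/(c_1 j)\}$, and $|\nabla_x\psi_j|\le2Mj$ on its support; since the sets $\{\dist(\cdot,\partial\Omega)<\varepsilon\}$ decrease to $\emptyset$ as $\varepsilon\to0$ we get $|\Omega\setminus\Omega_j|\to0$, which yields all the listed structural properties.

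For the convergence, write $S_j:=\Omega\setminus\Omega_j$ and use a finite partition of unity subordinate to a cover of $\overline\Omega$ by one ball $B_0\Subset\Omega$ (on which $\psi_j\equiv1$ for $j$ large) and finitely many boundary balls $B_\ell$ in which, after a rotation, $\Omega=\{(x',x_d):x_d>\gamma_\ell(x')\}$ with $\gamma_\ell$ being $L$–Lipschitz; it then suffices to bound the contribution of a single $B_\ell$. There, for $j$ large, $S_j\cap B_\ell$ lies in the layer $\{0<x_d-\gamma_\ell(x')<C/j\}$, because $\rho(x)\le2/j$ forces $\dist(x,\partial\Omega)\le2/(c_1j)$ and hence $x_d-\gamma_\ell(x')\le(1+L)\dist(x,\partial\Omega)$; in particular $|\nabla_x\psi_j|\le 2Mj\le C/(x_d-\gamma_\ell(x'))$ on $S_j\cap B_\ell$. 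Since $u(t,\cdot)\in W^{1,1}_0(\Omega)$ has vanishing trace, absolute continuity on lines gives, for a.e. $x'$, the bound $|u(t,x',x_d)|\le\int_{\gamma_\ell(x')}^{x_d}|\nabla_x u(t,x',s)|\,\diff s$, so writing $\tau=x_d-\gamma_\ell(x')$,
\[
\int_{S_j\cap B_\ell}|\nabla_x\psi_j\,u|^{p(t,x)}\,\diff x\le C\int\!\!\int_0^{C/j}\Big(\frac{1}{\tau}\int_0^\tau|\nabla_x u(t,x',\gamma_\ell(x')+\sigma)|\,\diff\sigma\Big)^{p(t,x',\gamma_\ell(x')+\tau)}\diff\tau\,\diff x'.
\]
Now, for each fixed $t$ and $x'$, I would apply the one–dimensional Hardy inequality on $(0,C/j)$ for the variable exponent $\tau\mapsto p(t,x',\gamma_\ell(x')+\tau)$: since $p$ is log-Hölder in space uniformly in $t$ (Assumption~\ref{ass:cont} or~\ref{ass:cont_entr}) and $\tau\mapsto\gamma_\ell(x')+\tau$ is a Lipschitz path, this exponent is log-Hölder near $0$ with a modulus independent of $t,x'$, so the one–dimensional variable-exponent Hardy inequality holds with a constant depending only on $p_{\min},p_{\max}$, the log-Hölder constant and $L$, up to an additive term of order $|(0,C/j)|$. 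Undoing the change of variables, summing over $\ell$ and integrating in $t$ gives
\[
\int_0^T\!\!\int_{\Omega}|\nabla_x\psi_j\,u|^{p(t,x)}\,\diff x\,\diff t\le C\int_{(0,T)\times S_j}|\nabla_x u|^{p(t,x)}\,\diff x\,\diff t+C\,|(0,T)\times S_j|,
\]
and since $\nabla_x u\in L^{p(t,x)}(\Omega_T;\R^d)$ means $|\nabla_x u|^{p(t,x)}\in L^1(\Omega_T)$ and $|(0,T)\times S_j|=T\,|S_j|\to0$, both terms vanish by absolute continuity of the Lebesgue integral, proving the claim.

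The main obstacle is precisely the fibre-wise variable-exponent Hardy inequality and, crucially, the fact that its constant can be taken independent of $t$ (and of the transverse slice $x'$): this is what lets the whole estimate be run at the level of modulars, which is essential because $u$ lies only in $L^\infty(0,T;L^1)$ in time and no time-uniform Sobolev bound is available. One could instead invoke the global variable-exponent Hardy inequality $\|u(t,\cdot)/\dist(\cdot,\partial\Omega)\|_{L^{p(\cdot,t)}(\Omega)}\le C\|\nabla_x u(t,\cdot)\|_{L^{p(\cdot,t)}(\Omega)}$ slice-wise, but passing from that norm statement back to an integrated-in-time modular bound is lossy; hence the fibre computation — whose only delicate point is the familiar log-Hölder exponent shift on a short interval near the singularity — is the route I would carry out in full detail.
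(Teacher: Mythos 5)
Your construction of $\psi_j$ from a regularized distance is fine and yields all the listed structural properties, and the reduction to boundary fibres via the vanishing trace of $u(t,\cdot)$ is the same device the paper uses in \eqref{app:gradient_ineq_for_sobolev}. The gap is the key analytic step: the fibre-wise \emph{modular} Hardy inequality
\begin{equation*}
\int_0^{\delta}\Bigl(\frac1\tau\int_0^\tau g\,\diff\sigma\Bigr)^{p(\tau)}\diff\tau\;\le\; C\int_0^{\delta}g(\sigma)^{p(\sigma)}\,\diff\sigma+C\delta,
\end{equation*}
with $C$ depending only on $p_{\min}$, $p_{\max}$ and the log-H\"older constant, is false. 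Log-H\"older continuity gives the \emph{norm} inequality $\|Hg\|_{L^{p(\cdot)}}\le C\|g\|_{L^{p(\cdot)}}$, but modular inequalities for averaging operators do not follow from it unless the modular of the input is normalized (say $\le 1$); this is the standard homogeneity obstruction (cf.\ Lerner's theorem on modular inequalities for the maximal operator). Concretely, take $p\equiv p_0$ on $(0,\epsilon]$ and $p\equiv p_0+c/\log(1/\epsilon)$ on $[\tfrac32\epsilon,2\epsilon]$ (log-H\"older with a uniform constant), and $g=\lambda$ on $(0,\epsilon)$, $g=0$ elsewhere. Then $\int_0^\delta g^{p(\sigma)}\diff\sigma=\epsilon\lambda^{p_0}$, while $Hg\ge\lambda/2$ on $[\tfrac32\epsilon,2\epsilon]$ gives $\int_{3\epsilon/2}^{2\epsilon}(Hg)^{p(\tau)}\diff\tau\ge c'\epsilon\,\lambda^{p_0}\lambda^{c/\log(1/\epsilon)}$, so the ratio of the two sides blows up as $\lambda\to\infty$ for fixed $\epsilon$. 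In your application $g$ is the restriction of $|\nabla_x u(t,\cdot)|$ to a fibre; its modular is finite for a.e.\ $(t,x')$ but in no way uniformly bounded over fibres, so the constant you need cannot be taken independent of $(t,x')$, and the residual loss $\Lambda_{\mathrm{fibre}}^{\,C/\log j}$ does not integrate away for general $\nabla_x u$ with $|\nabla_x u|^{p(t,x)}\in L^1(\Omega_T)$.

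This is exactly the point where the paper's argument is structured differently: it runs a rescaled Sobolev/Gagliardo--Nirenberg interpolation on cubes of side $1/j$, with the exponents $\alpha,\lambda$ chosen so that the exponent defect $r_m^j-q_m^j$ lands on $\|u\|_{L^1(Q_m^j)}$ (see \eqref{app:inequality_for_lemma_5}), a quantity that \emph{is} uniformly controlled by the hypothesis $u\in L^{\infty}((0,T);L^1(\Omega))$ --- a hypothesis your proof never invokes, which is a symptom of the problem. To repair your route you would have to arrange for the ``extra power'' created by the oscillation of $p$ to fall on a uniformly bounded quantity in the same way, which essentially forces you back to the paper's interpolation.
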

\begin{proof}
    As the proof is very similar to \cite[Lemma 4.1]{bulicek2021parabolic}, we will skip some of the technical details concerning the assumption on $\Omega$ being a Lipschitz domain. Define $\Omega_j = \left\{x\in\Omega:\, \mathrm{dist}(x, \p \Omega) > \frac{1}{j}\right\}$, so that $|\Omega\setminus\Omega_j|\to 0$ as $j\to \infty$. Moreover, let $\psi_j\in C^\infty_c(\Omega)$ such that $\psi_j\equiv 1$ on $\Omega_j$, $\psi_j\equiv 0$ outside of some open cover $U$ of $\overline{\Omega}_j$ and $0 < \psi_j < 1$ in between those sets. Notice that $\nabla_x\psi_j \equiv 0$ on $\Omega_j$ and $|\nabla_x\psi_j|\leq Cj$, for some general constant $C > 0$. We cover the boundary layer $\Omega\setminus\Omega_j$ with the family of cubes $\{Q_m^j\}_{m = 1}^{N_j}$ with edge of length $\frac{1}{j}$. Denote
    $$
        q_m^j(t) = \inf_{x\in Q_m^j}p(t, x), \quad r_m^j(t) = \sup_{x\in Q_m^j}p(t, x).
    $$
    By the log-H\"{o}lder continuity assumptions \ref{ass:cont} or \ref{ass:cont_entr}, as well as boundedness \ref{ass:usual_bounds_exp}, \ref{ass:usual_bounds_exp_entr} we may fix $j$ big enough, so that
    \begin{align}\label{app:some_ineq_on_local_exponent}
    r_m^j(t) - q_m^j(t) \leq \frac{1}{d + p_{\mathrm{max}}}.
    \end{align}
    Then,
    \begin{align}
        & r_m^j(t) - q_m^j(t) < r_m^j(t) - r_m^j(t)\frac{d}{d+ r_m^j(t)} \Longleftrightarrow \frac{q_m^j(t)d}{d - q_m^j(t)} > r_m^j(t)\,\, \text{ (in case $q_m^j(t) < d$)},\label{app:sobolev_embed_ineq}\\
        & r_m^j(t) < q_m^j(t)\left(1 + \frac{1}{d}\right)\label{app:some_ineq_on_local_exponent_2}.
    \end{align}
    Notice, that \eqref{app:sobolev_embed_ineq} in particular implies
    \begin{align}\label{app:sobolev_embedding}
        W^{1, q_m^j(t)}(Q_m^j) \hookrightarrow L^{r_m^j(t)}(Q_m^j).
    \end{align}
    Here, let us define $v(x) = u\left(\frac{x}{j}\right)$. Before moving forward, let us notice that as $\Omega$ is Lipschitz, we can assume without loss of generality, that its boundary is contained in the plane $\{x\in \R^d:\, x_d = 0\}$. Then, using absolute continuity on lines for Sobolev maps \cite[Theorem 4.21]{evans2015measure}, we can write
    $$
    v(s, x) = \int_0^{x_d}\p_{x_d}v(s, x_1, ..., x_{d-1}, r)\diff r.
    $$
    Hence, by Jensen's inequality
    \begin{equation}\label{app:gradient_ineq_for_sobolev}
    \begin{split}
    \int_{Q^1_m}|v(s, x)|^{q_m^j(s)}\diff x &\leq \int_{Q^1_m}\int_0^{x_d}|\p_{x_d}v(s, x_1, ..., x_{d-1}, r)|^{q_m^j(s)}\diff r\diff x \\
    &\leq \int_0^1\int_{Q^1_m}|\nabla_x v(s, x_1, ..., x_{d-1}, r)|^{q_m^j(s)}\diff x\diff r\\
    &\phantom{=}= \int_{Q^1_m}|\nabla_x v(s, x)|^{q_m^j(s)}\diff x.
    \end{split}
    \end{equation}
    Now, with the use of \eqref{app:sobolev_embedding} and \eqref{app:gradient_ineq_for_sobolev} we may imply
    \begin{equation}\label{app:interpolation_ineq_1}
        \begin{split}
            &\int_{Q_m^j}|u(s, x)|^{r_m^j(s)}\diff x = \frac{1}{j^d} \int_{Q_m^1}|v(x)|^{r_m^j(s)}\diff x \leq \frac{C}{j^d}\|v\|_{W^{1, q_m^j(t)}}^{r_m^j(s)} \\
            &\phantom{=}\leq \frac{C}{j^d}\left(\int_{Q_m^1}|\nabla_x v(s, x)|^{q_m^j(s)}\diff x\right)^{\frac{r_m^j(s)}{q_m^j(s)}} =\frac{C}{j^{d + r_m^j(s) - \frac{dr_m^j(s)}{q_m^j(s)}}} \left(\int_{Q_m^j}|\nabla_x u(s, x)|^{q_m^j(s)}\diff x\right)^{\frac{r_m^j(s)}{q_m^j(s)}},
        \end{split}
    \end{equation}
    as well as for any $\alpha\in (0, 1)$ (by \eqref{app:sobolev_embedding}, \eqref{app:gradient_ineq_for_sobolev} and Littlewood's interpolation inequality)
    \begin{equation*}
        \begin{split}
            &\int_{Q_m^j}|u(s, x)|^{r_m^j(s)}\diff x = \frac{1}{j^d} \int_{Q_m^1}|v(x)|^{r_m^j(s)}\diff x\\
            &\leq \frac{C}{j^d}\left(\int_{Q_m^1}|\nabla_x v(s, x)|^{q_m^j(s)}\diff x\right)^{\frac{r_m^j(s)}{q_m^j(s)}\alpha}\left(\int_{Q_m^1}|v(s, x)|\diff x\right)^{r_m^j(s)(1 - \alpha)}\\
            &=\frac{C}{j^{d + \alpha\left(r_m^j(s) - \frac{dr_m^j(s)}{q_m^j(s)}\right) + (1-\alpha)r_m^j(s)d}} \left(\int_{Q_m^j}|\nabla_x u(s, x)|^{q_m^j(s)}\diff x\right)^{\frac{r_m^j(s)}{q_m^j(s)}\alpha}\left(\int_{Q_m^j}|u(s, x)|\diff x\right)^{r_m^j(s)(1 - \alpha)}.
        \end{split}
    \end{equation*}
    Setting
    $$
    \alpha = \frac{d\,q_m^j(s)(r_m^j(s) - 1)}{d\,r_m^j(s)(q_m^j(s) - 1) + r_m^j(s)q_m^j(s)}
    $$
    (which is between $0$ and $1$ by \ref{ass:usual_bounds_exp} or \ref{ass:usual_bounds_exp_entr} and \eqref{app:some_ineq_on_local_exponent}) we obtain
    \begin{align}\label{app:interpolation_ineq_2}
        \int_{Q_m^j}|u(s, x)|^{r_m^j(s)}\diff x \leq C\left(\int_{Q_m^j}|\nabla_x u(s, x)|^{q_m^j(s)}\diff x\right)^{\frac{r_m^j(s)}{q_m^j(s)}\alpha}\left(\int_{Q_m^j}|u(s, x)|\diff x\right)^{r_m^j(s)(1 - \alpha)}.
    \end{align}
    Moreover, by \eqref{app:some_ineq_on_local_exponent_2}
    $$
    \alpha < \frac{q_m^j(s)}{r_m^j(s)}.
    $$
    Thus, we can set $\lambda\in (0, 1)$ as
    $$
    \lambda = \frac{\frac{q_m^j(s)}{r_m^j(s)}-\alpha}{1 - \alpha}.
    $$
    Hence, by \eqref{app:interpolation_ineq_1} and \eqref{app:interpolation_ineq_2}
    \begin{equation}\label{app:inequality_for_lemma_1}
        \begin{split}
            &\int_{Q_m^j}|u(s, x)|^{r_m^j(s)}\diff x = \left(\int_{Q_m^j}|u(s, x)|^{r_m^j(s)}\diff x\right)^{\lambda}\left(\int_{Q_m^j}|u(s, x)|^{r_m^j(s)}\diff x\right)^{1 - \lambda}\\
            &\leq \frac{C}{j^{\lambda\left(d + r_m^j(s) - \frac{dr_m^j(s)}{q_m^j(s)}\right)}} \left(\int_{Q_m^j}|\nabla_x u(s, x)|^{q_m^j(s)}\diff x\right)^{\frac{r_m^j(s)}{q_m^j(s)}\lambda}\\
            &\phantom{=}\cdot\left(\int_{Q_m^j}|\nabla_x u(s, x)|^{q_m^j(s)}\diff x\right)^{\frac{r_m^j(s)}{q_m^j(s)}\alpha(1-\lambda)}\left(\int_{Q_m^j}|u(s, x)|\diff x\right)^{r_m^j(s)(1 - \alpha)(1-\lambda)}.
        \end{split}
    \end{equation}
    By the choice of $\lambda$
    $$
    \frac{r_m^j(s)}{q_m^j(s)}\lambda + \frac{r_m^j(s)}{q_m^j(s)}\alpha(1-\lambda) = 1,
    $$
    meaning that \eqref{app:inequality_for_lemma_1} reduces to
    \begin{multline*}
            \int_{Q_m^j}|u(s, x)|^{r_m^j(s)}\diff x \\
            \leq \frac{C}{j^{\lambda\left(d + r_m^j(s) - \frac{dr_m^j(s)}{q_m^j(s)}\right)}} \left(\int_{Q_m^j}|\nabla_x u(s, x)|^{q_m^j(s)}\diff x\right)\left(\int_{Q_m^j}|u(s, x)|\diff x\right)^{r_m^j(s)(1 - \alpha)(1-\lambda)},
    \end{multline*}
    which in turn implies
    \begin{equation}\label{ineqyality_for_lemma_3}
    \begin{split}
        &\int_{Q_m^j}|j\,u(s, x)|^{r_m^j(s)}\diff x \\
        &\leq \frac{C}{j^{\lambda\,d\left(\frac{q_m^j(s) - r_m^j(s)}{q_m^j(s)}\right) + (\lambda - 1)r_m^j(s)}} \left(\int_{Q_m^j}|\nabla_x u(s, x)|^{q_m^j(s)}\diff x\right)\left(\int_{Q_m^j}|u(s, x)|\diff x\right)^{r_m^j(s)(1 - \alpha)(1-\lambda)}\\
        &\phantom{=}= \frac{C}{j^{\lambda\,d\left(\frac{q_m^j(s) - r_m^j(s)}{q_m^j(s)}\right) + (\lambda - 1)r_m^j(s)}}\|u\|^{r_m^j(s) - q_m^j(s)}_{L^1(Q_m^j)} \left(\int_{Q_m^j}|\nabla_x u(s, x)|^{q_m^j(s)}\diff x\right).
    \end{split}
    \end{equation}
    Now, notice that by \eqref{app:some_ineq_on_local_exponent_2}
    $$
    \alpha < \frac{q_m^j(s)}{r_m^j(s)} < \frac{d}{d+1} \Longrightarrow \frac{1}{1 - \alpha} < d + 1,
    $$
    which together with $\lambda < 1$ and \ref{ass:usual_bounds_exp} or \ref{ass:usual_bounds_exp_entr} implies
    \begin{equation}\label{app:exponent_ineq_for_j}
        \begin{split}
            \lambda\,d\left(\frac{q_m^j(s) - r_m^j(s)}{q_m^j(s)}\right) + (\lambda - 1)r_m^j(s) &= (q_m^j(s) - r_m^j(s))\left(\frac{\lambda\,d}{m} + \frac{1}{1-\alpha}\right) \\
            &\geq -\left(\frac{d}{p_{\mathrm{max}}} + d + 1\right)(r_m^j(s) - q_m^j(s)).
        \end{split}
    \end{equation}
    Combining \eqref{ineqyality_for_lemma_3} and \eqref{app:exponent_ineq_for_j} we obtain
    \begin{equation}\label{app:inequality_for_lemma_4}
        \begin{split}
            &\int_{Q_m^j}|j\,u(s, x)|^{r_m^j(s)}\diff x \\
            &\leq \frac{C}{j^{\lambda\,d\left(\frac{q_m^j(s) - r_m^j(s)}{q_m^j(s)}\right) + (\lambda - 1)r_m^j(s)}}\|u\|^{r_m^j(s) - q_m^j(s)}_{L^1(Q_m^j)} \left(\int_{Q_m^j}|\nabla_x u(s, x)|^{q_m^j(s)}\diff x\right)\\
            &\leq C\,j^{\left(\frac{d}{p_{\mathrm{max}}} + d + 1\right)(r_m^j(s) - q_m^j(s))}\|u\|^{r_m^j(s) - q_m^j(s)}_{L^1(Q_m^j)} \left(\int_{Q_m^j}|\nabla_x u(s, x)|^{q_m^j(s)}\diff x\right).
        \end{split}
    \end{equation}
    By \ref{ass:cont} or \ref{ass:cont_entr} we can further estimate
    \begin{equation*}
        \begin{split}
            j^{\left(\frac{d}{p_{\mathrm{max}}} + d + 1\right)(r_m^j(s) - q_m^j(s))} = e^{\left(\frac{d}{p_{\mathrm{max}}} + d + 1\right)(r_m^j(s) - q_m^j(s))\ln(j)} \leq e^{\left(\frac{d}{p_{\mathrm{max}}} + d + 1\right)C\ln(j)^{-1}\,\ln(j)} \leq C'.
        \end{split}
    \end{equation*}
    Putting it into \eqref{app:inequality_for_lemma_4} we can see
    \begin{align}\label{app:inequality_for_lemma_5}
        \int_{Q_m^j}|j\,u(s, x)|^{r_m^j(s)}\diff x \leq C\|u\|^{r_m^j(s) - q_m^j(s)}_{L^1(Q_m^j)} \left(\int_{Q_m^j}|\nabla_x u(s, x)|^{q_m^j(s)}\diff x\right).
    \end{align}
    Finally, with \eqref{app:inequality_for_lemma_5} we can show the thesis of the lemma. We have
    \begin{equation}\label{app:inequality_for_lemma_6}
        \begin{split}
            &\int_0^t \int_{\Omega} \left|\nabla\psi_{j}(x) \, u(s,x)\right|^{p(s, x)} \,\diff x \diff s = \int_0^t \int_{\Omega\setminus\Omega_j} \left|\nabla\psi_{j}(x) \, u(s,x)\right|^{p(s, x)} \,\diff x \diff s \\
            &\lesssim \int_0^t \int_{\Omega\setminus\Omega_j} \left|j \, u(s,x)\right|^{p(s, x)} \,\diff x \diff s = \int_0^t\sum_{m = 1}^{N_j}\int_{Q_m^j}\left|j \, u(s,x)\right|^{p(s, x)} \,\diff x \diff s \\
            &\leq \int_0^t\sum_{m = 1}^{N_j}\int_{Q_m^j}1 + \left|j \, u(s,x)\right|^{r_m^j(s)} \,\diff x \diff s \\
            &\leq T|\Omega\setminus\Omega_j| + \int_0^t\sum_{m = 1}^{N_j}C\|u\|^{r_m^j(s) - q_m^j(s)}_{L^1(Q_m^j)} \left(\int_{Q_m^j}|\nabla_x u(s, x)|^{q_m^j(s)}\diff x\right) \diff s\\
            &\leq T|\Omega\setminus\Omega_j| + C(d, p_\mathrm{min},p_\mathrm{max}, \|u\|_{L^\infty_tL^1_x}) \int_0^t\sum_{m = 1}^{N_j}\int_{Q_m^j}|\nabla_x u(s, x)|^{q_m^j(s)}\diff x\diff s\\
            &\leq T|\Omega\setminus\Omega_j| + C(d, p_\mathrm{min},p_\mathrm{max}, \|u\|_{L^\infty_tL^1_x}) \int_0^t\sum_{m = 1}^{N_j}\int_{Q_m^j}1 + |\nabla_x u(s, x)|^{p(s, x)}\diff x\diff s\\
            &\leq C(d, p_\mathrm{min},p_\mathrm{max}, \|u\|_{L^\infty_tL^1_x})\left(T|\Omega\setminus\Omega_j| + \int_0^t\int_{\Omega\setminus\Omega_j}|\nabla_x u(s, x)|^{p(s, x)}\diff x\diff s\right).
        \end{split}
    \end{equation}
    From our assumptions $|\nabla_x u(s,x)|^{p(s, x)}\in L^1((0, T)\times \Omega)$, hence the right-hand side of \eqref{app:inequality_for_lemma_6} converges to $0$, and this ends the proof of the lemma.
\end{proof}


\bibliographystyle{abbrv}
\bibliography{parabolic_l1}
\end{document}